\theoremstyle{plain}
\newtheorem{theorem}{Theorem}[section]
\newtheorem{corollary}[theorem]{Corollary}
\newtheorem{lemma}[theorem]{Lemma}
\theoremstyle{definition}
\newtheorem{definition}[theorem]{Definition}
\theoremstyle{remark}
\newtheorem{remark}[theorem]{Remark}
\begin{document}


\title{A naturally emerging bivariate Mittag-Leffler function and associated fractional-calculus operators}

\date{}

\author{Arran Fernandez\thanks{Email: \texttt{arran.fernandez@emu.edu.tr}}}
\author{Cemaliye K\"urt\thanks{Email: \texttt{cemaliye.kurt@emu.edu.tr}}}
\author{Mehmet Ali \"Ozarslan\thanks{Email: \texttt{mehmetali.ozarslan@emu.edu.tr}}}

\affil{{\small Department of Mathematics, Faculty of Arts and Sciences, Eastern Mediterranean University, North Cyprus, via Mersin-10, Turkey}}

\maketitle

\begin{abstract}
We define an analogue of the classical Mittag-Leffler function which is applied to two variables, and establish its basic properties. Using a corresponding single-variable function with fractional powers, we define an associated fractional integral operator which has many interesting properties. The motivation for these definitions is twofold: firstly their link with some fundamental fractional differential equations involving two independent fractional orders, and secondly the fact that they emerge naturally from certain applications in bioengineering. \\

\textit{Keywords:} Mittag-Leffler functions; fractional integrals; fractional derivatives; fractional differential equations; bivariate Mittag-Leffler functions.
\end{abstract}

\section{Introduction}

The classical \textbf{Mittag-Leffler function}, defined as
\begin{equation}
\label{ML:orig}
E_{\alpha}(x)=\sum_{n=0}^{\infty}\frac{x^n}{\Gamma(n\alpha+1)},\quad\quad\mathrm{Re}(\alpha)>0,
\end{equation}
was proposed \cite{mittag-leffler} by the Swedish mathematician G\"osta Mittag-Leffler in 1903. It has been extended and generalised in various ways \cite{gorenflo-kilbas-mainardi-rogosin,kilbas-srivastava-trujillo}, with functions denoted by $E_{\alpha,\beta}(x)$ and $E_{\alpha,\beta}^{\rho}(x)$ and $E_{\alpha,\beta,\gamma}(x)$, the ``two-parameter'' and ``three-parameter'' Mittag-Leffler functions, being defined by power series similar to the one for $E_{\alpha}(x)$ with modified coefficients to take account of the extra parameters:
\begin{align*}
E_{\alpha,\beta}(x)&=\sum_{n=0}^{\infty}\frac{x^n}{\Gamma(n\alpha+\beta)},\quad\quad\mathrm{Re}(\alpha)>0; \\
E_{\alpha,\beta}^{\rho}(x)&=\sum_{n=0}^{\infty}\frac{(\rho)_nx^n}{n!\Gamma(n\alpha+\beta)},\quad\quad\mathrm{Re}(\alpha)>0.
\end{align*}

Recently, a different type of generalisation has been proposed: the so-called ``bivariate'' and ``multivariate'' Mittag-Leffler functions, which are defined not by a power series in a single variable $x$, but by double power series in two variables $x$ and $y$, or even multiple power series in an arbitrary number of variables. Multivariate Mittag-Leffler functions were defined in \cite{saxena-kalla-saxena}, and various types of bivariate Mittag-Leffler functions have been defined in for example \cite{garg-manohar-kalla,ozarslan-kurt}. It is interesting to note that more than one type of bivariate function is emerging as an equivalent of the Mittag-Leffler function: so far we have seen
\begin{equation}
\label{ML:Garg}
E\left(\begin{array}{c}
\delta_1,\kappa_1;\delta_2,\kappa_2 \\ \gamma_1,\alpha_1,\beta_1;\gamma_2,\alpha_2;\gamma_3,\beta_2
\end{array}\Big|x,y\right)
=\sum_{m=0}^{\infty}\sum_{n=0}^{\infty}\frac{(\delta_1)_{\kappa_1m}(\delta_2)_{\kappa_2n}}{\Gamma(\alpha_1m+\beta_1n+\gamma_1)}\frac{x^m}{\Gamma(\alpha_2m+\gamma_2)}\frac{y^n}{\Gamma(\beta_2n+\gamma_3)}
\end{equation}
defined in \cite{garg-manohar-kalla} under the conditions $\mathrm{Re}(\alpha_j)>0$, $\mathrm{Re}(\beta_j)>0$, $\mathrm{Re}(\kappa_j)>0$, and
\begin{equation}
\label{ML:OK}
E_{\alpha ,\beta ,\kappa }^{(\delta )}(x,y)=\sum_{m=0}^{\infty}\sum_{n=0}^{\infty }\frac{(\delta )_{m+n}}{\Gamma (\alpha +m)\Gamma(\beta+\kappa n)}\frac{x^{m}}{m!}\frac{y^{\kappa n}}{n!},\quad\quad\alpha,\beta,\gamma\in\mathbb{C},
\end{equation}
defined in \cite{ozarslan-kurt} under the conditions $\mathrm{Re}(\alpha)>0$, $\mathrm{Re}(\beta)>0$, $\mathrm{Re}(\kappa)>0$.

These are both functions of two variables $x$ and $y$, both expressed as double power series which may be seen as analogous to the power series \eqref{ML:orig} for the original Mittag-Leffler function, but they are quite different functions from each other. The difference in structure is partly due to the fractional power of $y$ in \eqref{ML:OK}, but more importantly due to the exact nature of the gamma functions involved in the summand. In particular, in \eqref{ML:Garg} the only one involving both $m$ and $n$ (ensuring the double sum is not separable into a product of two single sums) is the $\Gamma(\alpha_1m+\beta_1n+\gamma_1)$ on the bottom, while in \eqref{ML:OK} the only one involving both $m$ and $n$ is the $(\delta)_{m+n}$ on the top.

In the current work, we shall define another type of bivariate Mittag-Leffler function, different from both \eqref{ML:Garg} and \eqref{ML:OK}, and then proceed to study its properties and applications. It is therefore necessary to justify why yet another bivariate Mittag-Leffler function is required, and what makes this new one specifically interesting. In order to do so, we turn to another field of mathematics which is strongly related to Mittag-Leffler functions.

\textbf{Fractional calculus} studies the generalisation of differentiation and integration to non-integer orders. This idea is centuries old, and classically \cite{miller-ross,oldham-spanier,samko-kilbas-marichev} it has been mostly tied to the Riemann--Liouville fractional calculus, defined by the following formulae for fractional integrals and fractional derivatives respectively:
\begin{align}
\label{RLorig:defint} \prescript{RL}{c}I_{x}^{\alpha}f(x)&=\frac{1}{\Gamma(\alpha)}\int_c^x(x-\xi)^{\alpha-1}f(\xi)\,\mathrm{d}\xi,\quad\quad\mathrm{Re}(\alpha)>0; \\
\label{RLorig:defderiv} \prescript{RL}{c}D_{x}^{\alpha}f(x)&=\frac{\mathrm{d}^k}{\mathrm{d}x^k}\prescript{RL}{c}I_{x}^{k-\alpha}f(x),k=\lfloor\mathrm{Re}(\alpha)\rfloor+1,\quad\quad\mathrm{Re}(\alpha)\geq0.
\end{align}
Closely related to the Riemann--Liouville fractional derivative, although not equivalent, is the Caputo fractional derivative. This is defined as follows using the Riemann--Liouville fractional integral:
\[
\prescript{C}{c}D_{x}^{\alpha}f(x)=\prescript{RL}{c}I_{x}^{k-\alpha}\frac{\mathrm{d}^k}{\mathrm{d}x^k}f(x),k=\lfloor\mathrm{Re}(\alpha)\rfloor+1,\quad\quad\mathrm{Re}(\alpha)\geq0.
\]
There are many other possible ways of defining fractional derivatives and integrals, including many which have been proposed only in the last ten years. This has led to various suggestions of criteria for what makes an operator a ``fractional derivative'' \cite{teodoro-machado-oliveira,hilfer-luchko}, as well as some proposed broad classes of fractional operators to cover many different definitions \cite{fernandez-ozarslan-baleanu,baleanu-fernandez2}.

There is a deep connection between Mittag-Leffler functions and fractional calculus, which has been explored in several texts \cite{gorenflo-kilbas-mainardi-rogosin,mathai-haubold}. One way in which this connection emerges is the following fractional differential relationship:
\begin{equation}
\label{ML:DE:Caputo}
\prescript{C}{0}D_{x}^{\alpha}\Big(E_{\alpha}(\omega x^{\alpha})\Big)=\omega E_{\alpha}(\omega x^{\alpha}),\quad\quad\mathrm{Re}(\alpha)>0.
\end{equation}
For this reason, the Mittag-Leffler function may be seen as a fractional equivalent of the exponential function. It and other power series related to \eqref{ML:orig} frequently emerge in the solution of fractional differential equations, by methods such as series solutions and numerical approximations \cite{sakamoto-yamamoto,momani-odibat,wang,djida-fernandez-area}.

Furthermore, some types of fractional calculus involve Mittag-Leffler functions inherently in their very definitions. These include the Prabhakar definition \cite{prabhakar,kilbas-saigo-saxena} which uses an integral transform with a 3-parameter Mittag-Leffler function in the kernel, and the Atangana--Baleanu definition \cite{atangana-baleanu,baleanu-fernandez} which uses an integral transform with a 1-parameter Mittag-Leffler function in the kernel. The latter especially has discovered many applications, since various real-life processes have behaviour which is better described by a Mittag-Leffler law than a power law \cite{baleanu-jajarmi-sajjadi-mozyrska,kumar-singh-tanwar-baleanu,yusuf-inc-aliyu-baleanu}. Recently, the newly proposed bivariate Mittag-Leffler functions have also been used to define fractional-calculus operators \cite{ozarslan-kurt,kurt-ozarslan-fernandez}.

Our current work is motivated by some ongoing experimental studies conducted at the Universities of Cambridge and London \cite{bonfanti-etal}, in which certain operators involving bivariate Mittag-Leffler functions emerged naturally from the analysis of the mechanical response of epithelial tissues. We undertook the task of properly defining these functions and operators, and proving their fundamental properties which may then be used later in analysis of those experimental results. We also discovered connections between the same bivariate Mittag-Leffler functions and certain important fractional differential equations. Therefore, we are confident that the functions and operators defined herein will have a rapid impact in several different fields of study. For a more detailed discussion of the impact and applications, see Section \ref{Sec:end} below.

This paper is arranged as follows. In section \ref{Sec:fn}, we define a new bivariate Mittag-Leffler function, both as a function of two independent variables $x,y$ and as a function of $t^{\alpha},t^{\beta}$ (the latter is technically univariate, but it still functions in a bivariate way, as we shall see). We consider various properties of these functions, including integral representations and relationships, Laplace transforms, fractional integrals and derivatives, and relationships with bivariate Laguerre polynomials. In section \ref{Sec:op}, we use the function applied to $t^{\alpha},t^{\beta}$ to define a new fractional integral operator and associated fractional differential operators, which we then analyse to establish their fundamental properties. In section \ref{Sec:end}, we discuss the conclusions, applications, and future impact of our work here.

\section{The new bivariate Mittag-Leffler function} \label{Sec:fn}

\subsection{Establishing the definition}

The following is the main definition to start off our work here.

\begin{definition}
\label{Def:ourML}
Let $\alpha ,\beta ,\gamma ,\delta \in \mathbb{C}$ be complex parameters with $\mathrm{Re}(\alpha )>0$ and $\mathrm{Re}(\beta )>0$. We define the following bivariate Mittag-Leffler function for general complex numbers $x$ and $y$:
\begin{equation}
\label{3}
E_{\alpha ,\beta ,\gamma }^{\delta }(x,y) :=\sum_{k=0}^{\infty}\sum_{l=0}^{\infty }\frac{(\delta )_{k+l}}{\Gamma (\alpha k+\beta l+\gamma )}\cdot\frac{x^ky^l}{k!l!},
\end{equation}
where the numerator $(\delta)_{k+l}$ is the Pochhammer symbol defined by \[(a)_n=\frac{\Gamma(a+n)}{\Gamma(a)}=a(a+1)(a+2)\dots(a+n-1).\]
Note that the series in \eqref{3} converges absolutely and locally
uniformly, therefore defines an entire function in each of $x,y$, provided that $\mathrm{Re}(\alpha )>0$ and $\mathrm{Re}(\beta )>0$. This can be proved by using the criteria of Srivastava and Daoust \cite{srivastava-daoust} for the generalized Lauricella series in two variables; it is also clear intuitively that $\alpha$ and $\beta$, being multiplied by $k$ and $l$ inside the gamma function, should be in the ``right direction'' while the values of $\gamma$ and $\delta$ will not matter for convergence.
\end{definition}

\begin{remark}
It is necessary to compare Definition \ref{Def:ourML} with the alternative multivariate Mittag-Leffler function $E_{(\rho_1,\dots,\rho_m),\lambda}^{(\gamma_1,\dots,\gamma_m)}(z_1,\dots,z_m)$ defined by Saxena et al \cite{saxena-kalla-saxena}, specifically the bivariate version of their definition:
\begin{equation}
\label{ML:SKS}
E_{(\rho_1,\rho_2),\lambda}^{(\gamma_1,\gamma_2)}(x,y)=\sum_{k=0}^{\infty}\sum_{l=0}^{\infty}\frac{(\gamma_1)_k(\gamma_2)_l}{\Gamma(\rho_1k+\rho_2l+\lambda)}\cdot\frac{x^ky^l}{k!l!}.
\end{equation}
The only difference between their \eqref{ML:SKS} and our \eqref{3} is in the Pochhammer symbols on the numerator. The definition \eqref{ML:SKS} contains two separate Pochhammer symbols in $k$ and $l$, so the only thing making the double sum inseparable is the gamma function on the denominator. Our definition contains one Pochhammer symbol $(\delta)_{k+l}$, introducing another element of inseparability. This turns out to be crucial for the semigroup property which we prove in Theorem \ref{Thm:semigroup} below, in comparison with the definition of Saxena et al which does not possess a semigroup property.
\end{remark}

Of particular importance for our results and applications is the case where we write $x=\omega_1t^{\alpha}$ and $y=\omega_2t^{\beta}$ for a single variable $t$, and (optionally) multiply by an extra power function:
\begin{equation}
\label{5}
t^{\gamma-1}E_{\alpha ,\beta ,\gamma }^{\delta }(\omega_1t^{\alpha },\omega_2t^{\beta})=\sum_{k=0}^{\infty }\sum_{l=0}^{\infty }\frac{(\delta )_{k+l}}{\Gamma(\alpha k+\beta l+\gamma )}\cdot\frac{\omega_1^k\omega_2^l}{k!l!}\;t^{\alpha k+\beta l+\gamma-1}.
\end{equation}
This function is technically univariate, since it depends on a single variable $t$ in addition to the parameters $\alpha,\beta,\gamma,\delta$. But in many ways it functions similarly to a bivariate Mittag-Leffler function: it is still defined by a double series instead of a single one, and the separate parameters $\alpha$ and $\beta$ grant some independence to the two inputs $\omega_1t^{\alpha}$ and $\omega_2t^{\beta}$.

We shall see later that many important results can only be proved for the univariate function \eqref{5} rather than with fully independent variables $x,y$ as in \eqref{3}. This is because the parameters $\alpha$ and $\beta$ both appear together in $k\alpha+l\beta$ within the gamma function on the denominator, so it is often necessary to have $k\alpha+l\beta$ in the exponent of the power function on the numerator too. In \eqref{5}, the power function exponent and the argument of the gamma function are matching just as they should for (e.g.) Laplace transforms or fractional derivatives.

To provide motivation for our definition, we prove a few fundamental properties of the newly defined function $E_{\alpha,\beta,\gamma}^{\delta}(x,y)$ which are analogous to important properties of the original Mittag-Leffler function $E_{\alpha}(x)$.

\begin{lemma}
\label{Lem:1exp}
If all parameters are $1$, we recover the double exponential function:
\[E_{1,1,1}^1(x,y)=e^xe^y,\quad\quad x,y\in\mathbb{C}.\]
\end{lemma}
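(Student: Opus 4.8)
The plan is to substitute all four parameters equal to $1$ directly into the defining series \eqref{3} and simplify the resulting coefficients. With $\alpha=\beta=\gamma=\delta=1$ we have $(\delta)_{k+l}=(1)_{k+l}=\Gamma(k+l+1)=(k+l)!$ and $\Gamma(\alpha k+\beta l+\gamma)=\Gamma(k+l+1)=(k+l)!$, so these two factors cancel exactly, leaving
\[
E_{1,1,1}^1(x,y)=\sum_{k=0}^{\infty}\sum_{l=0}^{\infty}\frac{x^ky^l}{k!\,l!}.
\]
This is permitted because by Definition \ref{Def:ourML} the double series converges absolutely and locally uniformly when $\mathrm{Re}(\alpha)>0$ and $\mathrm{Re}(\beta)>0$, which holds here, so the rearrangement is justified.

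Next I would split the absolutely convergent double sum as a product of two single sums, $\left(\sum_{k=0}^{\infty}\frac{x^k}{k!}\right)\left(\sum_{l=0}^{\infty}\frac{y^l}{l!}\right)$, and recognise each factor as the Taylor series of the exponential, giving $e^xe^y$. This completes the proof.

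The computation is entirely routine; there is no real obstacle. The only point requiring a word of care is the interchange/factorisation of the double series, which is licensed by the absolute convergence already asserted in Definition \ref{Def:ourML}. One might also remark in passing that this lemma is the bivariate analogue of the familiar identity $E_{1,1}(x)=e^x$ for the classical two-parameter Mittag-Leffler function, and that it is consistent with the general principle that $E_{\alpha,\beta,\gamma}^{\delta}$ should degenerate to elementary functions when the parameters take their simplest values.
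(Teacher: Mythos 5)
Your proof is correct and follows essentially the same route as the paper: substitute the parameters, cancel $(1)_{k+l}=\Gamma(k+l+1)$ against the gamma function in the denominator, and factor the resulting double series into $e^xe^y$. Your explicit appeal to absolute convergence to justify the factorisation is a slight tidying of the paper's argument (which merely observes the separability), but the approach is the same.
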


\begin{proof}
From the definition \eqref{3}:
\begin{align*}
E_{1,1,1}^{1}(x,y)&=\sum_{k=0}^{\infty}\sum_{l=0}^{\infty }\frac{(1)_{k+l}}{\Gamma(k+l+1)}\cdot\frac{x^ky^l}{k!l!}=\sum_{k=0}^{\infty}\sum_{l=0}^{\infty }\frac{\Gamma(k+l+1)}{\Gamma(1)\Gamma(k+l+1)}\cdot\frac{x^ky^l}{k!l!}=\sum_{k=0}^{\infty}\sum_{l=0}^{\infty }\frac{x^k}{k!}\cdot\frac{y^l}{l!}=e^ke^l,
\end{align*}
where we see that the parts of the expression that combine $k$ and $l$ together, namely $(\delta)_{k+l}$ and $\Gamma(\alpha k+\beta l+1)$, cancel out so that the two sums are completely separable.
\end{proof}

Lemma \ref{Lem:1exp} is the natural analogue of the fact that the original Mittag-Leffler function reduces to exponential when $\alpha=1$:
\[E_1(x)=e^x.\]
It is a fundamental aspect of the Mittag-Leffler function that it can be seen as a ``fractional exponential function'', the series \eqref{ML:orig} being like the Taylor series of $e^x$ but with an extra parametrisation given by $\alpha$.

To illustrate the functions that we have defined and continue to analyse in this paper, we include figures showing their graphs for some example values of the parameters involved. These graphs were generated using Mathematica, version 11.2, and using finite double sums ($0\leq k,l\leq20$) to approximate the infinite double series over $k,l$.
\begin{itemize}
\item Figure \ref{Fig:bivar} shows the bivariate function \eqref{3} plotted against $x$ and $y$, with the $x$-axis from left to right and the $y$-axis from front to back. In these graphs we assume $\gamma=\delta=1$ and use varying values of $\alpha$ and $\beta$ to capture different behaviours. \\

The first graph (Figure \ref{Fig:bivar:1,1}) shows the case of Lemma \ref{Lem:1exp}, namely the exponential function $e^{x+y}$. In the other graphs, we make small changes to the values of $\alpha$ and $\beta$ in turn and observe the changes to the graphs. It is noticeable that the function grows respectively faster or slower with respect to $x$ when $\alpha$ decreases or increases, and similarly for $y$ and $\beta$. This observation is borne out by the definition \eqref{3}: increasing $\alpha$ would increase the growth of the denominator with respect to $k$, which reduces the growth of the series with respect to $x$. \\

\item Figure \ref{Fig:univar} shows the univariate version \eqref{5} plotted against $t$, again assuming that $\gamma=\delta=1$ and allowing $\alpha,\beta$ to vary. This function is symmetric in $\alpha$ and $\beta$, so instead of changing each parameter in turn, we allow both to vary together. \\

The first graph (Figure \ref{Fig:univar:1,1}) shows the case of Lemma \ref{Lem:1exp}, namely the exponential function $e^{2t}$. In the other graphs, we increase or decrease the parameters $\alpha,\beta$ and observe the changes to the graphs. Again, the growth of the function (as apparent from the vertical axis) is respectively faster or slower if $\alpha$ and $\beta$ are smaller or larger. This makes sense because increasing the parameters means increasing the growth of the denominator in \eqref{5}, which means the series would grow more slowly.
\end{itemize}

\begin{figure}	
\centering
	\begin{subfigure}[t]{0.4\textwidth}
		\centering
		\includegraphics[width=\textwidth]{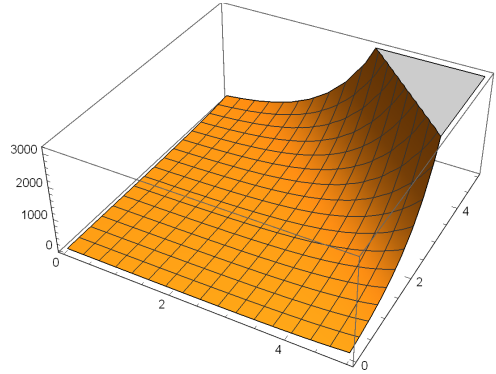}
		\caption{$\alpha=1,\beta=1$: here $E_{1,1,1}^1(x,y)=e^{x+y}$}\label{Fig:bivar:1,1}
	\end{subfigure}
\\
	\begin{subfigure}{0.4\textwidth}
		\centering
		\includegraphics[width=\textwidth]{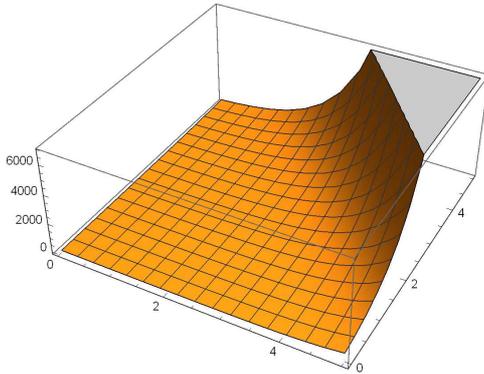}
		\caption{$\alpha=0.9,\beta=1$}\label{Fig:bivar:0.9,1}
	\end{subfigure}
\quad
	\begin{subfigure}{0.4\textwidth}
		\centering
		\includegraphics[width=\textwidth]{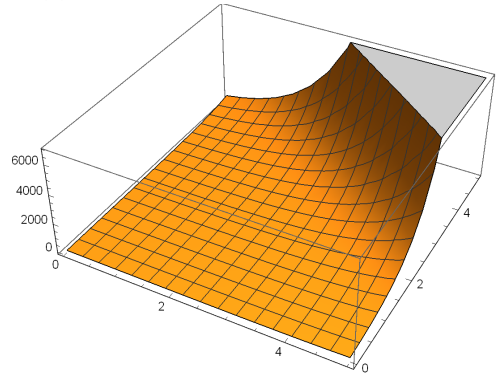}
		\caption{$\alpha=1,\beta=0.9$}\label{Fig:bivar:1,0.9}
	\end{subfigure}
\\
		\begin{subfigure}[b]{0.4\textwidth}
		\centering
		\includegraphics[width=\textwidth]{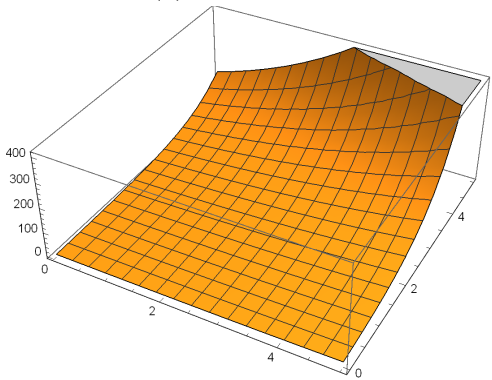}
		\caption{$\alpha=1.5,\beta=1$}\label{Fig:bivar:1.5,1}
	\end{subfigure}
\quad
	\begin{subfigure}[b]{0.4\textwidth}
		\centering
		\includegraphics[width=\textwidth]{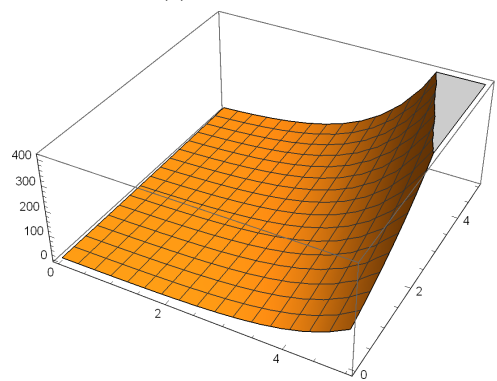}
		\caption{$\alpha=1,\beta=1.5$}\label{Fig:bivar:1,1.5}
	\end{subfigure}
	\caption{Plots of the bivariate Mittag-Leffler function $E_{\alpha,\beta,1}^{1}(x,y)$ with $\gamma=\delta=1$ and varying $\alpha,\beta$}\label{Fig:bivar}
\end{figure}

\begin{figure}	
\centering
	\begin{subfigure}[t]{0.4\textwidth}
		\centering
		\includegraphics[width=\textwidth]{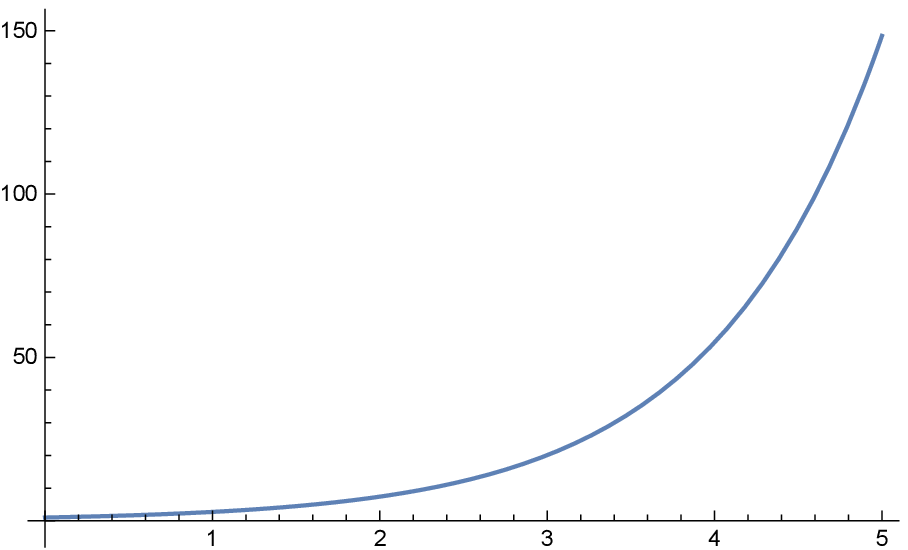}
		\caption{$\alpha=1,\beta=1$: here $E_{1,1,1}^1(t,t)=e^{2t}$}\label{Fig:univar:1,1}
	\end{subfigure}
\quad
	\begin{subfigure}[t]{0.4\textwidth}
		\centering
		\includegraphics[width=\textwidth]{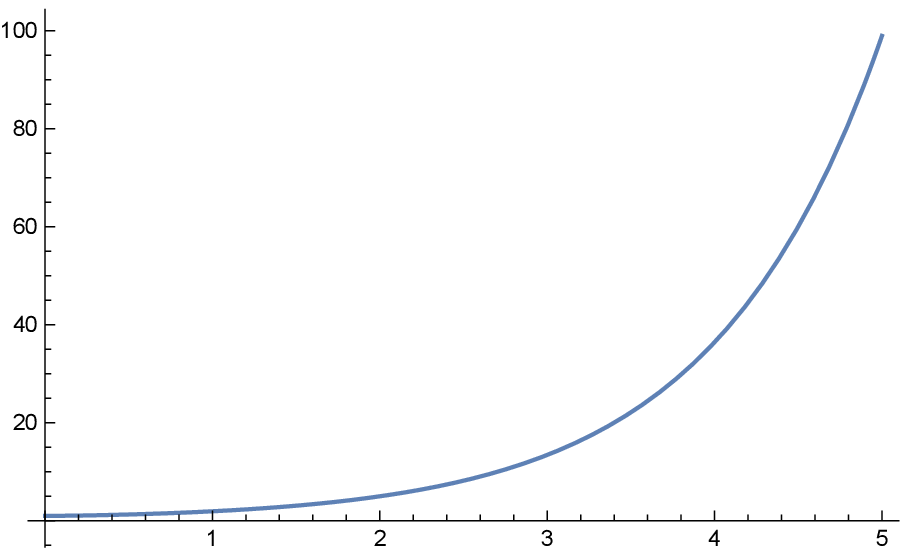}
		\caption{$\alpha=1.5,\beta=1.5$}\label{Fig:univar:1.5,1.5}
	\end{subfigure}
\\
		\begin{subfigure}[b]{0.4\textwidth}
		\centering
		\includegraphics[width=\textwidth]{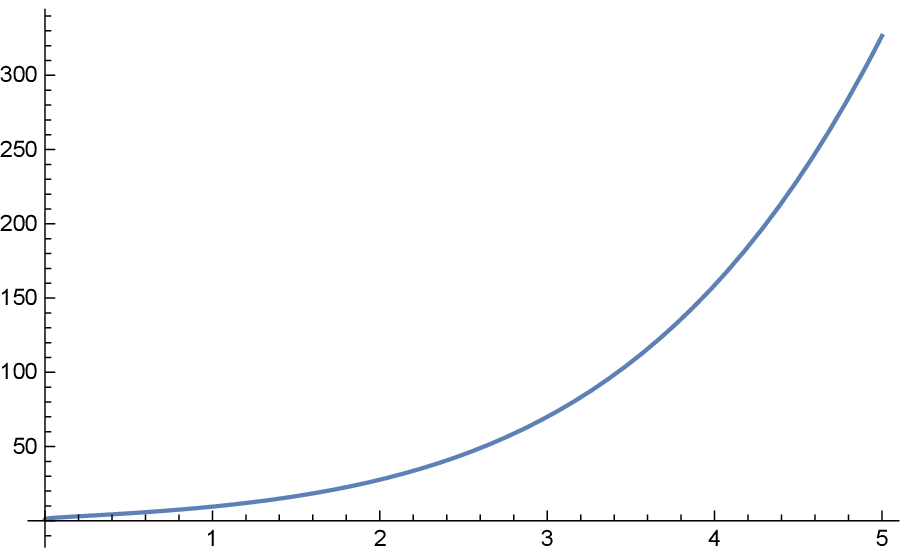}
		\caption{$\alpha=0.25,\beta=0.25$}\label{Fig:univar:0.25,0.25}
	\end{subfigure}
\quad
	\begin{subfigure}[b]{0.4\textwidth}
		\centering
		\includegraphics[width=\textwidth]{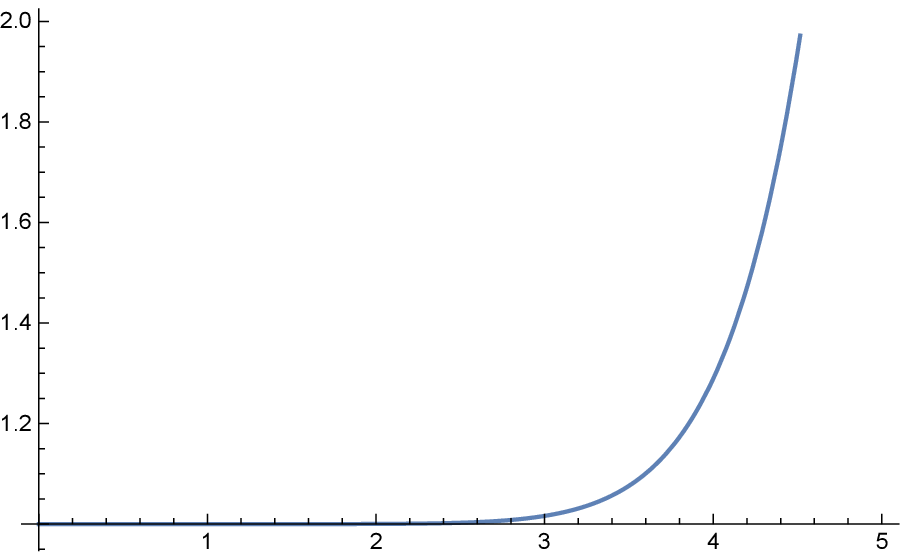}
		\caption{$\alpha=10,\beta=10$}\label{Fig:univar:10,10}
	\end{subfigure}
	\caption{Plots of the univariate version $E_{\alpha,\beta,1}^1(t^{\alpha},t^{\beta})$ with $\gamma=\delta=1$ and varying $\alpha,\beta$}\label{Fig:univar}
\end{figure}

\begin{theorem}
\label{Thm:FDE}
The univariate form \eqref{5} of the bivariate Mittag-Leffler function \eqref{3}, with $\delta=1$, gives a solution function
\[u(t)=t^{\gamma-1}E_{\alpha,\beta,\gamma}^{1}(\omega_1t^{\alpha},\omega_2t^{\beta})\]
for the following fractional initial value problem in the form of an ODE involving two independent fractional orders of differentiation:
\begin{equation}
\label{FDE:RL}
\prescript{RL}{0}D^{\alpha+\beta}_tu(t)-\omega_2\prescript{RL}{0}D^{\alpha}_tu(t)-\omega_1\prescript{RL}{0}D^{\beta}_tu(t)=\frac{t^{\gamma-\alpha-\beta-1}}{\Gamma(\gamma-\alpha-\beta)}.
\end{equation}
\end{theorem}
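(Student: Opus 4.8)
The plan is to insert the explicit double series \eqref{5} with $\delta=1$ --- for which $(\delta)_{k+l}=(1)_{k+l}=(k+l)!$, so the $(k,l)$-th coefficient of $u(t)$ is $\frac{(k+l)!}{k!\,l!\,\Gamma(\alpha k+\beta l+\gamma)}$ --- into the left-hand side of \eqref{FDE:RL}, and to evaluate each of the three Riemann--Liouville derivatives term by term by means of the single standard formula
\[
\prescript{RL}{0}D^{\mu}_t\,t^{\,p-1}=\frac{\Gamma(p)}{\Gamma(p-\mu)}\,t^{\,p-1-\mu},\qquad\mathrm{Re}(p)>0,
\]
with the convention that $1/\Gamma$ is zero at the poles of $\Gamma$ (which automatically reconciles any degenerate parameter choices for which some gamma factor below becomes singular). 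Interchanging $\prescript{RL}{0}D^{\mu}_t$ with the infinite sum is justified by the local uniform convergence on $t>0$ of \eqref{5} and of each series obtained from it after fractional differentiation; this is guaranteed by the same Srivastava--Daoust criterion invoked in Definition \ref{Def:ourML}, since fractional differentiation only translates the argument of the gamma function in the denominator by a constant. (We tacitly assume $\mathrm{Re}(\gamma)$ large enough for the fractional integrals based at $0$ to converge, the general case following by analytic continuation in $\gamma$.)

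Applying this to the first term, the numerator factor $\Gamma(\alpha k+\beta l+\gamma)$ cancels the denominator of the coefficient, yielding
\[
\prescript{RL}{0}D^{\alpha+\beta}_t u(t)=\sum_{k=0}^{\infty}\sum_{l=0}^{\infty}\frac{(k+l)!}{k!\,l!}\cdot\frac{\omega_1^k\omega_2^l}{\Gamma(\alpha k+\beta l+\gamma-\alpha-\beta)}\;t^{\alpha k+\beta l+\gamma-\alpha-\beta-1}.
\]
I would compute $\prescript{RL}{0}D^{\alpha}_t u(t)$ and $\prescript{RL}{0}D^{\beta}_t u(t)$ in the same way, and then shift the index of summation --- replacing $l$ by $l-1$ in $\omega_2\,\prescript{RL}{0}D^{\alpha}_t u(t)$ and $k$ by $k-1$ in $\omega_1\,\prescript{RL}{0}D^{\beta}_t u(t)$ --- so that every surviving term carries the same power $t^{\alpha k+\beta l+\gamma-\alpha-\beta-1}$ and the same gamma factor $\Gamma(\alpha k+\beta l+\gamma-\alpha-\beta)$, at the price of the two shifted sums starting from $l\geq1$, respectively $k\geq1$. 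Collecting terms, the coefficient of $\dfrac{\omega_1^k\omega_2^l}{\Gamma(\alpha k+\beta l+\gamma-\alpha-\beta)}\,t^{\alpha k+\beta l+\gamma-\alpha-\beta-1}$ in the whole left-hand side equals
\[
\frac{(k+l)!}{k!\,l!}-\frac{(k+l-1)!}{k!\,(l-1)!}-\frac{(k+l-1)!}{(k-1)!\,l!},
\]
where a term involving the factorial of a negative integer is understood to be absent.

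The final step is to observe, via the elementary identity $(k+l)!=(k+l)\,(k+l-1)!=k\,(k+l-1)!+l\,(k+l-1)!$, that this coefficient vanishes for every $(k,l)\neq(0,0)$: for $k,l\geq1$ the full three-term identity applies, while for $k=0$ (resp. $l=0$) one subtracted term drops out and what remains is $1-1=0$. Hence the entire double sum on the left-hand side of \eqref{FDE:RL} collapses, and the sole surviving contribution is the $(k,l)=(0,0)$ term of $\prescript{RL}{0}D^{\alpha+\beta}_t u(t)$, namely $\dfrac{t^{\gamma-\alpha-\beta-1}}{\Gamma(\gamma-\alpha-\beta)}$, which is precisely the right-hand side. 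The only part of this argument requiring genuine care --- as opposed to routine bookkeeping of index shifts and a Pascal-type cancellation --- is the first step: the rigorous justification of term-by-term fractional differentiation of the series, together with the check that the degenerate parameter cases (poles of the various gamma functions) remain consistent with the stated identity.
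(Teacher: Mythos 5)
Your proposal is correct and follows essentially the same route as the paper: term-by-term Riemann--Liouville differentiation of the double series via the power-function rule \eqref{power:RL}, index shifts to align the powers $t^{\alpha k+\beta l+\gamma-\alpha-\beta-1}$, and the Pascal-type identity $(k+l)!=k\,(k+l-1)!+l\,(k+l-1)!$ to cancel everything except the $(k,l)=(0,0)$ term. The only cosmetic difference is bookkeeping --- you collapse all three derivatives into one double sum and cancel coefficientwise, whereas the paper splits off the $k=0$ and $l=0$ boundary terms as two-parameter Mittag-Leffler functions before matching the two sides --- but the underlying computation is identical.
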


\begin{proof}
We shall make use of the well-known formula
\begin{equation}
\label{power:RL}
\prescript{RL}{c}D^{\nu}_x\left(\frac{x^{\mu}}{\Gamma(\mu+1)}\right)=\frac{x^{\mu-\nu}}{\Gamma(\mu-\nu+1)},\quad\quad\mu,\nu\in\mathbb{C},\mathrm{Re}(\mu)>-1,
\end{equation}
for Riemann--Liouville fractional differintegrals of power functions. Starting from the series formula \eqref{5}, we compute the fractional differintegrals of $u(t)$ as follows:
\begin{align*}
\prescript{RL}{0}D^{\alpha}_tu(t)&=\prescript{RL}{0}D^{\alpha}_t\left(\sum_{k=0}^{\infty }\sum_{l=0}^{\infty }\frac{(k+l)!}{\Gamma(\alpha k+\beta l+\gamma )}\cdot\frac{\omega_1^k\omega_2^l}{k!l!}\;t^{\alpha k+\beta l+\gamma-1}\right) \\
&=\sum_{k=0}^{\infty }\sum_{l=0}^{\infty }\frac{(k+l)!\omega_1^k\omega_2^l}{k!l!}\;\prescript{RL}{0}D^{\alpha}_t\left(\frac{t^{\alpha k+\beta l+\gamma-1}}{\Gamma(\alpha k+\beta l+\gamma )}\right) \\
&=\sum_{k=0}^{\infty }\sum_{l=0}^{\infty }\frac{(k+l)!\omega_1^k\omega_2^l}{k!l!}\;\frac{t^{\alpha(k-1)+\beta l+\gamma-1}}{\Gamma(\alpha(k-1)+\beta l+\gamma )} \\
&=\sum_{l=0}^{\infty }\frac{\omega_2^lt^{-\alpha+\beta l+\gamma-1}}{\Gamma(-\alpha+\beta l+\gamma)}+\sum_{k=1}^{\infty }\sum_{l=0}^{\infty }\frac{(k+l)!\omega_1^k\omega_2^l}{k!l!}\;\frac{t^{\alpha(k-1)+\beta l+\gamma-1}}{\Gamma(\alpha(k-1)+\beta l+\gamma )} \\
&=t^{\gamma-\alpha-1}E_{\beta,\gamma-\alpha}(\omega_2t^{\beta})+\sum_{k=0}^{\infty }\sum_{l=0}^{\infty }\frac{(k+l+1)!\omega_1^{k+1}\omega_2^l}{(k+1)!l!}\;\frac{t^{\alpha k+\beta l+\gamma-1}}{\Gamma(\alpha k+\beta l+\gamma )},
\end{align*}
and similarly (by symmetry)
\begin{align*}
\prescript{RL}{0}D^{\beta}_tu(t)&=t^{\gamma-\beta-1}E_{\alpha,\gamma-\beta}(\omega_1t^{\alpha})+\sum_{k=0}^{\infty }\sum_{l=0}^{\infty }\frac{(k+l+1)!\omega_1^k\omega_2^{l+1}}{k!(l+1)!}\;\frac{t^{\alpha k+\beta l+\gamma-1}}{\Gamma(\alpha k+\beta l+\gamma )}.
\end{align*}
Taking a linear combination, we find
\begin{align*}
&\omega_2\prescript{RL}{0}D^{\alpha}_tu(t)+\omega_1\prescript{RL}{0}D^{\beta}_tu(t) \\
&\hspace{1cm}=\omega_2t^{\gamma-\alpha-1}E_{\beta,\gamma-\alpha}(\omega_2t^{\beta})+\omega_1t^{\gamma-\beta-1}E_{\alpha,\gamma-\beta}(\omega_1t^{\alpha})+\omega_2\sum_{k=0}^{\infty }\sum_{l=0}^{\infty }\frac{(k+l+1)!\omega_1^{k+1}\omega_2^l}{(k+1)!l!}\;\frac{t^{\alpha k+\beta l+\gamma-1}}{\Gamma(\alpha k+\beta l+\gamma )} \\ &\hspace{3cm}+\omega_1\sum_{k=0}^{\infty }\sum_{l=0}^{\infty }\frac{(k+l+1)!\omega_1^k\omega_2^{l+1}}{k!(l+1)!}\;\frac{t^{\alpha k+\beta l+\gamma-1}}{\Gamma(\alpha k+\beta l+\gamma )} \\
&\hspace{1cm}=\omega_2t^{\gamma-\alpha-1}E_{\beta,\gamma-\alpha}(\omega_2t^{\beta})+\omega_1t^{\gamma-\beta-1}E_{\alpha,\gamma-\beta}(\omega_1t^{\alpha}) \\ &\hspace{3cm}+\omega_1\omega_2\sum_{k=0}^{\infty }\sum_{l=0}^{\infty }\left[\frac{(k+l+1)!}{(k+1)!l!}+\frac{(k+l+1)!}{k!(l+1)!}\right]\frac{\omega_1^k\omega_2^lt^{\alpha k+\beta l+\gamma-1}}{\Gamma(\alpha k+\beta l+\gamma )} \\
&\hspace{1cm}=\omega_2t^{\gamma-\alpha-1}E_{\beta,\gamma-\alpha}(\omega_2t^{\beta})+\omega_1t^{\gamma-\beta-1}E_{\alpha,\gamma-\beta}(\omega_1t^{\alpha})+\omega_1\omega_2\sum_{k=0}^{\infty }\sum_{l=0}^{\infty }\frac{(k+l+2)!}{(k+1)!(l+1)!}\frac{\omega_1^k\omega_2^lt^{\alpha k+\beta l+\gamma-1}}{\Gamma(\alpha k+\beta l+\gamma )}.
\end{align*}

Meanwhile, starting again with the series definition from \eqref{6} for $u(t)$, we find:
\begin{align*}
\prescript{RL}{0}D^{\alpha+\beta}_tu(t)&=\sum_{k=0}^{\infty }\sum_{l=0}^{\infty }\frac{(k+l)!\omega_1^k\omega_2^l}{k!l!}\;\prescript{RL}{0}D^{\alpha+\beta}_t\left(\frac{t^{\alpha k+\beta l+\gamma-1}}{\Gamma(\alpha k+\beta l+\gamma )}\right) \\
&=\sum_{k=0}^{\infty }\sum_{l=0}^{\infty }\frac{(k+l)!\omega_1^k\omega_2^l}{k!l!}\;\frac{t^{\alpha(k-1)+\beta(l-1)+\gamma-1}}{\Gamma(\alpha(k-1)+\beta(l-1)+\gamma )} \\
&=\left(\mathop{\sum\sum}_{k=0\ \text{or}\ l=0}+\sum_{k=1}^{\infty }\sum_{l=1}^{\infty }\right)\frac{(k+l)!\omega_1^k\omega_2^l}{k!l!}\;\frac{t^{\alpha(k-1)+\beta(l-1)+\gamma-1}}{\Gamma(\alpha(k-1)+\beta(l-1)+\gamma )} \\
&=\frac{t^{-\alpha-\beta+\gamma-1}}{\Gamma(-\alpha-\beta+\gamma)}+\sum_{k=1}^{\infty}\frac{\omega_1^kt^{\alpha(k-1)-\beta+\gamma-1}}{\Gamma(\alpha(k-1)-\beta+\gamma)}+\sum_{l=1}^{\infty}\frac{\omega_2^lt^{-\alpha+\beta(l-1)+\gamma-1}}{\Gamma(-\alpha+\beta(l-1)+\gamma )} \\ &\hspace{5cm}+\sum_{k=1}^{\infty }\sum_{l=1}^{\infty }\frac{(k+l)!\omega_1^k\omega_2^l}{k!l!}\;\frac{t^{\alpha(k-1)+\beta(l-1)+\gamma-1}}{\Gamma(\alpha(k-1)+\beta(l-1)+\gamma )} \\
&=\frac{t^{-\alpha-\beta+\gamma-1}}{\Gamma(-\alpha-\beta+\gamma)}+\sum_{k=0}^{\infty}\frac{\omega_1^{k+1}t^{\alpha k-\beta+\gamma-1}}{\Gamma(\alpha k-\beta+\gamma)}+\sum_{l=0}^{\infty}\frac{\omega_2^{l+1}t^{-\alpha+\beta l+\gamma-1}}{\Gamma(-\alpha+\beta l+\gamma)} \\ &\hspace{5cm}+\sum_{k=0}^{\infty }\sum_{l=0}^{\infty }\frac{(k+l+2)!\omega_1^{k+1}\omega_2^{l+1}}{(k+1)!(l+1)!}\;\frac{t^{\alpha k+\beta l+\gamma-1}}{\Gamma(\alpha k+\beta l+\gamma )} \\
&=\frac{t^{-\alpha-\beta+\gamma-1}}{\Gamma(-\alpha-\beta+\gamma)}+\omega_1t^{\gamma-\beta-1}E_{\alpha,\gamma-\beta}(\omega_1t^{\alpha})+\omega_2t^{\gamma-\alpha-1}E_{\beta,\gamma-\alpha}(\omega_2t^{\beta}) \\ &\hspace{5cm}+\sum_{k=0}^{\infty }\sum_{l=0}^{\infty }\frac{(k+l+2)!\omega_1^{k+1}\omega_2^{l+1}}{(k+1)!(l+1)!}\;\frac{t^{\alpha k+\beta l+\gamma-1}}{\Gamma(\alpha k+\beta l+\gamma )} \\
&=\frac{t^{-\alpha-\beta+\gamma-1}}{\Gamma(-\alpha-\beta+\gamma)}+\omega_2\prescript{RL}{0}D^{\alpha}_tu(t)+\omega_1\prescript{RL}{0}D^{\beta}_tu(t).
\end{align*}
Thus, the result is proved.
\end{proof}

Theorem \ref{Thm:FDE} is a natural analogue, in two independent fractional orders of differentiation $\alpha,\beta$, of the following differential equation which is satisfied by the original Mittag-Leffler function $E_{\alpha}(\omega x^{\alpha})$:
\begin{equation}
\label{ML:DE:RL}
\prescript{RL}{0}D_{x}^{\alpha}u(x)-\omega u(x)=\frac{x^{-\alpha}}{\Gamma(1-\alpha)},\quad\quad\mathrm{Re}(\alpha)>0.
\end{equation}
This fractional differential equation, using Riemann--Liouville derivatives, is more complicated than the corresponding Caputo equation satisfied by the same function $E_{\alpha}(\omega x^{\alpha})$, namely $\prescript{RL}{0}D_{x}^{\alpha}u(x)-\omega u(x)=0$ as stated in \eqref{ML:DE:Caputo} above. The difference arises from the constant term in the Mittag-Leffler function.

In our case, we have three different fractional derivatives to three different orders appearing in the equation \eqref{FDE:RL}. Changing Riemann--Liouville to Caputo will introduce three extra terms (from the constant term in the double Mittag-Leffler sum), so while the power function in \eqref{FDE:C} does disappear, it is replaced by two more power terms. Therefore, there is no fractional differential equation quite so nice as \eqref{ML:DE:Caputo} for our new Mittag-Leffler function. The Caputo differential equation is given in Corollary \ref{Coroll:FDE} below.

Note that, apart from the inhomogeneous forcing term, we do have a simple and elegant differential equation for the new Mittag-Leffler function \eqref{5}. This indicates its naturality as a bivariate (or double-series) version of the original Mittag-Leffler function.

\begin{corollary}
\label{Coroll:FDE}
The univariate form \eqref{5} of the bivariate Mittag-Leffler function \eqref{3}, with $\gamma=\delta=1$ and under the extra assumption $0<\mathrm{Re}(\alpha+\beta)<1$, gives a solution function
\[u(t)=E_{\alpha,\beta,1}^{1}(\omega_1t^{\alpha},\omega_2t^{\beta})\]
for the following fractional initial value problem in the form of an ODE involving two independent fractional orders of differentiation:
\begin{equation}
\label{FDE:C}
\prescript{C}{0}D^{\alpha+\beta}_tu(t)-\omega_2\prescript{C}{0}D^{\alpha}_tu(t)-\omega_1\prescript{C}{0}D^{\beta}_tu(t)=\frac{\omega_1t^{-\alpha}}{\Gamma(1-\alpha)}+\frac{\omega_2t^{-\beta}}{\Gamma(1-\beta)},\quad\quad u(0)=1.
\end{equation}
\end{corollary}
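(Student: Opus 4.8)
The plan is to derive this from Theorem~\ref{Thm:FDE} (specialised to $\gamma=\delta=1$) by converting Riemann--Liouville derivatives into Caputo ones. The key tool is the elementary identity valid for $0<\mathrm{Re}(\nu)<1$,
\[
\prescript{C}{0}D^{\nu}_tf(t)=\prescript{RL}{0}D^{\nu}_t\bigl(f(t)-f(0)\bigr)=\prescript{RL}{0}D^{\nu}_tf(t)-f(0)\frac{t^{-\nu}}{\Gamma(1-\nu)},
\]
the last step being \eqref{power:RL} applied to the constant function $1$. The extra hypothesis $0<\mathrm{Re}(\alpha+\beta)<1$ is imposed exactly so that this identity may be used at all three orders $\alpha$, $\beta$ and $\alpha+\beta$ simultaneously: each then lies in $(0,1)$, so $k=1$ in the definition of the Caputo derivative and a single subtracted constant suffices.

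First I would note that $u(0)=1$: putting $t=0$ in the series \eqref{5} with $\gamma=1$ annihilates every term except $k=l=0$, which contributes $(1)_0/\Gamma(1)=1$; this is also the prescribed initial value. Since $u(t)-1$ is given by the absolutely and locally uniformly convergent double series obtained from \eqref{5} by deleting the $(0,0)$ term, and every surviving exponent $\alpha k+\beta l$ has strictly positive real part, the conversion identity applies to $u$ termwise at the orders $\alpha,\beta,\alpha+\beta$, giving
\[
\prescript{RL}{0}D^{\alpha}_tu=\prescript{C}{0}D^{\alpha}_tu+\frac{t^{-\alpha}}{\Gamma(1-\alpha)},\qquad
\prescript{RL}{0}D^{\beta}_tu=\prescript{C}{0}D^{\beta}_tu+\frac{t^{-\beta}}{\Gamma(1-\beta)},\qquad
\prescript{RL}{0}D^{\alpha+\beta}_tu=\prescript{C}{0}D^{\alpha+\beta}_tu+\frac{t^{-\alpha-\beta}}{\Gamma(1-\alpha-\beta)}.
\]
Substituting these into the identity \eqref{FDE:RL} of Theorem~\ref{Thm:FDE} specialised to $\gamma=1$ (whose right-hand side is then $t^{-\alpha-\beta}/\Gamma(1-\alpha-\beta)$), the power term produced by converting $\prescript{RL}{0}D^{\alpha+\beta}_tu$ cancels the forcing term on the right, while the two power terms produced by converting the $\alpha$- and $\beta$-order derivatives assemble precisely into the forcing function on the right-hand side of \eqref{FDE:C}. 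Together with $u(0)=1$ this establishes the corollary.

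The computation itself is routine; the only genuine point of care is the restriction $\mathrm{Re}(\alpha+\beta)<1$, since without it $\lfloor\mathrm{Re}(\alpha+\beta)\rfloor+1\geq 2$ and the Caputo derivative of order $\alpha+\beta$ would involve $u'(0)$ as well, so the simple one-constant conversion — and hence the clean cancellation leaving only a two-term forcing function — would break down. A secondary item is to justify, exactly as in the proof of Theorem~\ref{Thm:FDE}, the interchange of the fractional operators with the double sum, and to confirm that the constant term is the sole obstruction to passing between the Riemann--Liouville and Caputo derivatives, i.e.\ that $u-1$ has all exponents with real part exceeding $-1$. One could alternatively bypass Theorem~\ref{Thm:FDE} altogether and differentiate \eqref{5} termwise using the Caputo power rule $\prescript{C}{0}D^{\nu}_t(t^{\mu})=\Gamma(\mu+1)t^{\mu-\nu}/\Gamma(\mu-\nu+1)$ for $\mathrm{Re}(\mu)>0$ together with $\prescript{C}{0}D^{\nu}_t(1)=0$, but this merely reproduces the same cancellation by hand.
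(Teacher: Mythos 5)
Your proposal is correct and follows essentially the same route as the paper: both convert the Riemann--Liouville derivatives of Theorem \ref{Thm:FDE} (with $\gamma=1$) into Caputo derivatives by subtracting the contribution of the constant term, observe that the $t^{-\alpha-\beta}/\Gamma(1-\alpha-\beta)$ term cancels the forcing term, and invoke the restriction $0<\mathrm{Re}(\alpha+\beta)<1$ for exactly the same reason, namely so that the Caputo derivatives of the relevant power functions exist and a single subtracted constant suffices. Your explicit check that $u(0)=1$ is a small addition the paper leaves implicit, but otherwise the arguments coincide.
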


\begin{proof}
In the Caputo case, we have the same result \eqref{power:RL} as Riemann--Liouville for fractional derivatives of power functions, but only under the assumption that $\mathrm{Re}(\mu)>\lfloor\mathrm{Re}(\nu)\rfloor$. This is because the Caputo derivative is a Riemann--Liouville integral of a standard repeated derivative,
\[\prescript{C}{0}D^{\nu}_x\left(\frac{x^{\mu}}{\Gamma(\mu+1)}\right)=\prescript{RL}{0}I^{\nu-n}_x\left(\frac{x^{\mu-n}}{\Gamma(\mu-n+1)}\right),\quad n:=\lfloor\mathrm{Re}(\nu)\rfloor+1,\]
and so to get the final conclusion \eqref{power:RL} we need the assumption that $\mathrm{Re}(\mu-n)>-1$, namely that $\mathrm{Re}(\mu)>n-1=\lfloor\mathrm{Re}(\nu)\rfloor$. If this condition does not hold, then the Caputo derivative does not exist (the integral is divergent). Therefore, for the Caputo derivatives in \eqref{FDE:C} to be defined, we need the conditions $\mathrm{Re}(\alpha)>\lfloor\mathrm{Re}(\alpha+\beta)\rfloor$ and $\mathrm{Re}(\beta)>\lfloor\mathrm{Re}(\alpha+\beta)\rfloor$, which necessitates the assumption $0<\mathrm{Re}(\alpha+\beta)<1$.

Also, the Caputo derivative of a constant is always zero, but the Riemann--Liouville derivative is not: $\prescript{RL}{0}D^{\nu}_x(1)=\frac{x^{-\nu}}{\Gamma(1-\nu)}$. So when we take Caputo derivatives of the series for $u(t)$ in the case $\gamma=1$, namely
\[u(t)=\sum_{k=0}^{\infty }\sum_{l=0}^{\infty }\frac{(k+l)!}{\Gamma(\alpha k+\beta l+1)}\cdot\frac{\omega_1^k\omega_2^l}{k!l!}\;t^{\alpha k+\beta l},\]
the constant term ($k=l=0$) goes to zero instead of a negative power function as in the Riemann--Liouville case. This means each Caputo derivative differs from the corresponding Riemann--Liouville derivative by a negative power function:
\begin{align*}
\prescript{C}{0}D^{\alpha}_tu(t)=\prescript{RL}{0}D^{\alpha}_tu(t)-\frac{t^{-\alpha}}{\Gamma(1-\alpha)}&,\quad\quad\prescript{C}{0}D^{\beta}_tu(t)=\prescript{RL}{0}D^{\beta}_tu(t)-\frac{t^{-\beta}}{\Gamma(1-\beta)}, \\ \prescript{C}{0}D^{\alpha+\beta}_tu(t)&=\prescript{RL}{0}D^{\alpha+\beta}_tu(t)-\frac{t^{-\alpha-\beta}}{\Gamma(1-\alpha-\beta)}.
\end{align*}
From Theorem \ref{Thm:FDE} we know that
\[
\prescript{RL}{0}D^{\alpha+\beta}_tu(t)-\frac{t^{-\alpha-\beta}}{\Gamma(1-\alpha-\beta)}-\omega_2\prescript{RL}{0}D^{\alpha}_tu(t)-\omega_1\prescript{RL}{0}D^{\beta}_tu(t)=0,
\]
which implies
\[
\prescript{C}{0}D^{\alpha+\beta}_tu(t)-\omega_2\left[\prescript{C}{0}D^{\alpha}_tu(t)+\frac{t^{-\alpha}}{\Gamma(1-\alpha)}\right]-\omega_1\left[\prescript{C}{0}D^{\beta}_tu(t)+\frac{t^{-\beta}}{\Gamma(1-\beta)}\right]=0,
\]
and the result follows.
\end{proof}

\subsection{Results and relationships}

Having introduced the main functions that we shall be studying, and provided motivation for the naturality of these definitions as extensions of the classical Mittag-Leffler function, we proceed to prove various results about these functions. To begin with, the following complex integral representation is similar to the one proved for a different series function in \cite{ozarslan}.

\begin{theorem}[Complex integral representation]
\label{Thm:complint}
For $\alpha,\beta,\gamma,\delta\in\mathbb{C}$ with $\mathrm{Re}(\alpha)>0$ and $\mathrm{Re}(\beta)>0$, the bivariate Mittag-Leffler function \eqref{3} has the following complex integral representation:
\[
E_{\alpha,\beta,\gamma}^{\delta}(x,y)=\frac{1}{2\pi i}\int_H\frac{e^tt^{-\gamma}}{(1-xt^{-\alpha}-yt^{-\beta})^{\delta}}\,\mathrm{d}t,
\]
where $H$ is the Hankel contour in the complex plane, which starts and ends at $-\infty$ and wraps around the origin in a positive direction, and where all fractional powers of $t$ are defined using the principal branch with branch cut $(-\infty,0]$.
\end{theorem}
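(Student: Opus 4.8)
The plan is to start from the classical Hankel-contour representation of the reciprocal gamma function,
\[
\frac{1}{\Gamma(z)}=\frac{1}{2\pi i}\int_H e^t t^{-z}\,\mathrm{d}t,\qquad z\in\mathbb{C},
\]
with the same branch conventions as in the statement, and to insert it termwise into the defining series \eqref{3}. Equivalently, and more cleanly, I would argue from the right-hand side of the claimed identity: expand the integrand as a double power series in $x$ and $y$, integrate term by term, and recognise each resulting contour integral as $1/\Gamma(\alpha k+\beta l+\gamma)$ via the formula above.

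The first technical point is that the generalised binomial expansion
\[
\bigl(1-xt^{-\alpha}-yt^{-\beta}\bigr)^{-\delta}=\sum_{n=0}^{\infty}\frac{(\delta)_n}{n!}\bigl(xt^{-\alpha}+yt^{-\beta}\bigr)^n=\sum_{k=0}^{\infty}\sum_{l=0}^{\infty}\frac{(\delta)_{k+l}}{k!\,l!}\,x^k y^l\,t^{-\alpha k-\beta l},
\]
where the second equality uses the multinomial theorem together with the identity for $(\delta)_{k+l}$, is valid only where $\bigl|xt^{-\alpha}+yt^{-\beta}\bigr|<1$, which fails near the origin on $H$. I would fix this by first deforming $H$, using Cauchy's theorem and the analyticity of $e^t t^{-\gamma}\bigl(1-xt^{-\alpha}-yt^{-\beta}\bigr)^{-\delta}$ off the branch cut $(-\infty,0]$, into a Hankel contour $H_R$ whose circular part has a large radius $R$; the value of the integral is unchanged. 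Since $|t^{-\alpha}|\le|t|^{-\mathrm{Re}(\alpha)}e^{\pi|\mathrm{Im}(\alpha)|}$ and similarly for $\beta$, and $\mathrm{Re}(\alpha),\mathrm{Re}(\beta)>0$, one can choose $R$ so that $\bigl|xt^{-\alpha}+yt^{-\beta}\bigr|\le q<1$ holds uniformly for all $t$ on $H_R$ (on which $|t|\ge R$).

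Next I would justify the term-by-term integration. With $|w|:=\bigl|xt^{-\alpha}+yt^{-\beta}\bigr|\le q<1$ on $H_R$, the partial sums of the binomial series are bounded in modulus by $\sum_{n\ge0}\frac{|(\delta)_n|}{n!}q^n=:M<\infty$, a constant independent of $t$ (the series converging because $|(\delta)_n|/n!=O(n^{\mathrm{Re}(\delta)-1})$). Hence the integrand and all its partial sums are dominated on $H_R$ by $M\,e^{\pi|\mathrm{Im}(\gamma)|}|t|^{-\mathrm{Re}(\gamma)}e^{\mathrm{Re}(t)}$, which is integrable over $H_R$ because $e^{\mathrm{Re}(t)}$ decays along the two rays tending to $-\infty$. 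The dominated convergence theorem then permits interchanging the (double) sum with the integral, and the absolute convergence of the resulting double series — already recorded after Definition \ref{Def:ourML} — allows the reindexing from $\sum_n\sum_{k+l=n}$ to $\sum_k\sum_l$.

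Finally, each term yields $\frac{1}{2\pi i}\int_{H_R}e^t t^{-\gamma-\alpha k-\beta l}\,\mathrm{d}t=\frac{1}{\Gamma(\alpha k+\beta l+\gamma)}$ by the reciprocal-gamma formula (valid for the deformed contour as well), so the right-hand side collapses exactly into the series \eqref{3} defining $E_{\alpha,\beta,\gamma}^{\delta}(x,y)$. The main obstacle is the point handled in the second and third steps: the binomial expansion simply does not converge on the original Hankel contour, and pushing the contour out to a large radius while controlling all the complex fractional powers uniformly — and then producing a single integrable majorant valid on the unbounded contour — is the part that needs genuine care; the rest is bookkeeping.
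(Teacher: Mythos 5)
Your proposal is correct, and it is essentially the paper's computation read in the opposite direction, with more analytic care. The paper starts from the series \eqref{3}, substitutes Hankel's formula for $1/\Gamma(\alpha k+\beta l+\gamma)$ termwise, interchanges sum and integral by appealing to local uniform convergence, and then resums the double series inside the integral to $(1-xt^{-\alpha}-yt^{-\beta})^{-\delta}$ --- a resummation which, exactly as you observe, requires $\bigl|xt^{-\alpha}\bigr|$, $\bigl|yt^{-\beta}\bigr|$ small and hence tacitly requires the contour to be far from the origin; the paper leaves this point implicit. You instead start from the integral, deform $H$ to a Hankel contour of large radius $R$ so that the binomial expansion converges uniformly on the contour, exhibit an integrable majorant, integrate term by term by dominated convergence, and recover each $1/\Gamma(\alpha k+\beta l+\gamma)$ from Hankel's formula; your multinomial rearrangement $(\delta)_n/n!\cdot n!/(k!\,l!)=(\delta)_{k+l}/(k!\,l!)$ is the same identity the paper uses in reverse. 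What your route buys is a genuine justification of the expansion and the interchange, which is the weakest point of the paper's own argument. The one step to tighten is the deformation: the factor $(1-xt^{-\alpha}-yt^{-\beta})^{-\delta}$ is not analytic on all of $\mathbb{C}\setminus(-\infty,0]$, since the base can vanish or land on the negative real axis at points of a bounded set in the cut plane; so the Cauchy-theorem deformation from $H$ to $H_R$ is valid only when $H$ itself already avoids that set, which is in effect the same tacit assumption on the contour that the theorem statement (and the paper's proof) makes. With that caveat recorded, your argument is complete and, if anything, more rigorous than the original.
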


\begin{proof}
We make use of the Hankel formula for the gamma function \cite[\S12]{whittaker-watson}:
\[
\frac{1}{\Gamma(z)}=\frac{1}{2\pi i}\int_Ht^{-z}e^t\,\mathrm{d}t,\quad\quad z\in\mathbb{C}.
\]
Using this for the inverse gamma function in the series for $E_{\alpha,\beta,\gamma}^{\delta}(x,y)$:
\begin{align*}
E_{\alpha ,\beta ,\gamma }^{\delta }(x,y)&=\sum_{k=0}^{\infty}\sum_{l=0}^{\infty }\frac{(\delta )_{k+l}x^ky^l}{k!l!\Gamma (\alpha k+\beta l+\gamma )} \\
&=\sum_{k=0}^{\infty}\sum_{l=0}^{\infty }\frac{(\delta )_{k+l}x^ky^l}{2\pi ik!l!}\int_Ht^{-\alpha k-\beta l-\gamma}e^t\,\mathrm{d}t \\
&=\frac{1}{2\pi i}\int_He^tt^{-\gamma}\sum_{k=0}^{\infty}\sum_{l=0}^{\infty }\frac{(\delta )_{k+l}}{k!l!}\left(\frac{x}{t^{\alpha}}\right)^k\left(\frac{y}{t^{\beta}}\right)^l\,\mathrm{d}t,
\end{align*}
where we use the local uniform convergence of the series to swap the summation and integration.

The double sum inside this integral may be evaluated as follows:
\begin{align*}
\sum_{k=0}^{\infty}\sum_{l=0}^{\infty }\frac{(\delta+k)_l(\delta)_k}{k!l!}\left(\frac{x}{t^{\alpha}}\right)^k\left(\frac{y}{t^{\beta}}\right)^l&=\sum_{k=0}^{\infty}\frac{(\delta)_k}{k!}\left(\frac{x}{t^{\alpha}}\right)^k\left(1-\frac{y}{t^{\beta}}\right)^{-\delta-k} \\
&=\left(1-\frac{y}{t^{\beta}}\right)^{-\delta}\sum_{k=0}^{\infty}\frac{(\delta)_k}{k!}\left(\frac{x}{t^{\alpha}\left(1-\frac{y}{t^{\beta}}\right)}\right)^k \\
&=\left(1-\frac{y}{t^{\beta}}\right)^{-\delta}\left(1-\frac{x}{t^{\alpha}\left(1-\frac{y}{t^{\beta}}\right)}\right)^{-\delta} \\
&=\left(1-\frac{y}{t^{\beta}}-\frac{x}{t^{\alpha}}\right)^{-\delta}.
\end{align*}
Substituting this back into the integral formula for $E_{\alpha,\beta,\gamma}^{\delta}(x,y)$ obtained above, we find the result.
\end{proof}

\begin{corollary}
For $\alpha,\beta,\gamma,\delta,\omega_1,\omega_2\in\mathbb{C}$ with $\mathrm{Re}(\alpha)>0$ and $\mathrm{Re}(\beta)>0$, and $t\in\mathbb{R}$, the univariate version \eqref{5} has the following complex integral representation:
\[
t^{\gamma-1}E_{\alpha,\beta,\gamma}^{\delta}(\omega_1t^{\alpha},\omega_2t^{\beta})=\frac{1}{2\pi i}\int_H\frac{e^{tz}z^{-\gamma}}{(1-\omega_1z^{-\alpha}-\omega_2z^{-\beta})^{\delta}}\,\mathrm{d}z,
\]
where $H$ is the Hankel contour as defined in Theorem \ref{Thm:complint}.
\end{corollary}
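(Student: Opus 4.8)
The plan is to deduce this directly from Theorem \ref{Thm:complint} by substituting $x=\omega_1t^{\alpha}$, $y=\omega_2t^{\beta}$ and then performing a change of variables in the Hankel integral. Since $x$ and $y$ in Theorem \ref{Thm:complint} are arbitrary complex constants, independent of the contour variable, I would first set $x=\omega_1t^{\alpha}$ and $y=\omega_2t^{\beta}$ for a fixed $t>0$, writing $s$ for the dummy integration variable:
\[
E_{\alpha,\beta,\gamma}^{\delta}(\omega_1t^{\alpha},\omega_2t^{\beta})=\frac{1}{2\pi i}\int_H\frac{e^{s}s^{-\gamma}}{\big(1-\omega_1(t/s)^{\alpha}-\omega_2(t/s)^{\beta}\big)^{\delta}}\,\mathrm{d}s.
\]
Multiplying through by $t^{\gamma-1}$ and substituting $s=tz$ (so $\mathrm{d}s=t\,\mathrm{d}z$), we have $s^{-\gamma}=t^{-\gamma}z^{-\gamma}$ and $t/s=z^{-1}$ on the principal branch, and the power prefactors combine as $t^{\gamma-1}\cdot t^{-\gamma}\cdot t=1$; this yields exactly the claimed formula.

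The one point genuinely requiring care is the behaviour of the contour under the substitution $s=tz$. For $t>0$ this is a positive dilation of the complex plane, which sends the Hankel contour $H$ (incoming from $-\infty$ below the cut, looping the origin, returning to $-\infty$ above it) to $tH$, again a legitimate representative of the same class of contours, homotopic to $H$ in $\mathbb{C}\setminus(-\infty,0]$ and avoiding the branch point at the origin; since the integrand is holomorphic off the cut with suitable decay at $-\infty$, Cauchy's theorem lets us replace $tH$ by $H$. The dilation is also consistent with the principal-branch conventions, since multiplying by a positive real number does not change arguments. (For complex $t$ one would need more care with the branch cut; as elsewhere in the paper the natural reading is $t>0$, which I would state explicitly.)

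Alternatively, and perhaps more transparently, I would prove the corollary from scratch in parallel with Theorem \ref{Thm:complint}: apply the Hankel formula $\tfrac{1}{\Gamma(z)}=\tfrac{1}{2\pi i}\int_He^{s}s^{-z}\,\mathrm{d}s$ to each term of the series \eqref{5}, substitute $s=tz$ inside each term to get $\tfrac{t^{\alpha k+\beta l+\gamma-1}}{\Gamma(\alpha k+\beta l+\gamma)}=\tfrac{1}{2\pi i}\int_He^{tz}z^{-\alpha k-\beta l-\gamma}\,\mathrm{d}z$, interchange sum and integral using local uniform convergence, and then collapse the inner double sum $\sum_{k,l}\tfrac{(\delta)_{k+l}}{k!l!}(\omega_1z^{-\alpha})^k(\omega_2z^{-\beta})^l=(1-\omega_1z^{-\alpha}-\omega_2z^{-\beta})^{-\delta}$ exactly as in the proof of Theorem \ref{Thm:complint}. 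Either route reduces to the same contour bookkeeping in the change of variables, which is the only real obstacle; the remaining algebra is routine.
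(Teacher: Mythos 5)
Your proposal is correct and follows essentially the same route as the paper: substitute $x=\omega_1t^{\alpha}$, $y=\omega_2t^{\beta}$ into Theorem \ref{Thm:complint}, multiply by $t^{\gamma-1}$, and rescale the integration variable (the paper's $z=u/t$ is your $s=tz$). Your extra remarks on the dilation of the Hankel contour and the restriction to $t>0$ are careful additions that the paper leaves implicit, but they do not change the argument.
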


\begin{proof}
We start from the result of Theorem \ref{Thm:complint}, replacing the variable of integration $t$ by $u$ to avoid confusion with the new variable $t$. Substitute $x=\omega_1t^{\alpha}$, $y=\omega_2t^{\beta}$ to get:
\[
E_{\alpha,\beta,\gamma}^{\delta}(\omega_1t^{\alpha},\omega_2t^{\beta})=\frac{1}{2\pi i}\int_H\frac{e^uu^{-\gamma}}{\big(1-\omega_1(\frac{t}{u})^{\alpha}-\omega_2(\frac{t}{u})^{\beta}\big)^{\delta}}\,\mathrm{d}u,
\]
and therefore
\[
t^{\gamma-1}E_{\alpha,\beta,\gamma}^{\delta}(\omega_1t^{\alpha},\omega_2t^{\beta})=\frac{1}{2\pi i}\int_H\frac{e^u(\frac{t}{u})^{\gamma}}{\big(1-\omega_1(\frac{t}{u})^{\alpha}-\omega_2(\frac{t}{u})^{\beta}\big)^{\delta}}\big(\tfrac{1}{t}\big)\,\mathrm{d}u.
\]
Making the substitution $z=\frac{u}{t}$, we obtain the stated result.
\end{proof}

The next result concerns the Laplace transform of our newly defined Mittag-Leffler type function. Note that -- in contrast with previous bivariate Mittag-Leffler functions \cite{ozarslan-kurt,kurt-ozarslan-fernandez} -- we cannot usefully calculate the double Laplace transform with respect to $x$ and $y$ of the bivariate function \eqref{3}, because the gamma function on the denominator involves both $k$ and $l$ together. Instead, we consider the univariate version \eqref{5} and calculate the Laplace transform with respect to $t$. The notation
\[
\mathcal{L}[f(t)](s)=\int_{0}^{\infty}e^{-st}f(t)\,\mathrm{d}t,\quad\quad\mathrm{Re}(s)>0,
\end{equation*}
is used for Laplace transforms.

\begin{theorem}[Laplace transform]
\label{Thm:Laplace}
For $\alpha ,\beta ,\gamma ,\delta \in \mathbb{C}$ with $\mathrm{Re}(\alpha ),\mathrm{Re}(\beta ),\mathrm{Re}(\gamma )>0$, we have
\begin{equation*}
\mathcal{L}\Big[ t^{\gamma -1}E_{\alpha ,\beta ,\gamma}^{\delta }(\omega_1t^{\alpha },\omega _{2}t^{\beta })\Big] (s)=\frac{1}{s^{\gamma }}\left( 1-\frac{\omega _{1}}{s^{\alpha }}-\frac{\omega _{2}}{s^{\beta }}\right) ^{-\delta },\quad\quad\mathrm{Re}(s)>0.
\end{equation*}
\end{theorem}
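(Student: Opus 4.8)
The plan is to compute the Laplace transform termwise on the double series \eqref{5}, using the standard formula for the Laplace transform of a power function, and then recognise the resulting double series as a binomial expansion. First I would write out
\[
\mathcal{L}\Big[t^{\gamma-1}E_{\alpha,\beta,\gamma}^{\delta}(\omega_1t^{\alpha},\omega_2t^{\beta})\Big](s)=\mathcal{L}\left[\sum_{k=0}^{\infty}\sum_{l=0}^{\infty}\frac{(\delta)_{k+l}}{\Gamma(\alpha k+\beta l+\gamma)}\cdot\frac{\omega_1^k\omega_2^l}{k!\,l!}\;t^{\alpha k+\beta l+\gamma-1}\right](s),
\]
and justify interchanging $\mathcal{L}$ with the double sum by the absolute and locally uniform convergence asserted in Definition \ref{Def:ourML} (together with a dominated-convergence argument on $(0,\infty)$, valid for $\mathrm{Re}(s)>0$).

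Next I would apply the elementary identity $\mathcal{L}[t^{\mu-1}](s)=\Gamma(\mu)/s^{\mu}$, valid for $\mathrm{Re}(\mu)>0$, with $\mu=\alpha k+\beta l+\gamma$; this requires $\mathrm{Re}(\alpha k+\beta l+\gamma)>0$ for all $k,l\geq0$, which holds precisely because $\mathrm{Re}(\alpha),\mathrm{Re}(\beta),\mathrm{Re}(\gamma)>0$ — and this is exactly why the extra hypothesis $\mathrm{Re}(\gamma)>0$ is imposed here. The $\Gamma(\alpha k+\beta l+\gamma)$ produced by the Laplace transform cancels the one in the denominator of \eqref{5}, leaving
\[
\sum_{k=0}^{\infty}\sum_{l=0}^{\infty}\frac{(\delta)_{k+l}}{k!\,l!}\cdot\frac{\omega_1^k\omega_2^l}{s^{\alpha k+\beta l+\gamma}}=\frac{1}{s^{\gamma}}\sum_{k=0}^{\infty}\sum_{l=0}^{\infty}\frac{(\delta)_{k+l}}{k!\,l!}\left(\frac{\omega_1}{s^{\alpha}}\right)^{k}\left(\frac{\omega_2}{s^{\beta}}\right)^{l}.
\]

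Finally I would evaluate the remaining double sum. This is the same manipulation already carried out inside the proof of Theorem \ref{Thm:complint}: writing $(\delta)_{k+l}=(\delta)_k(\delta+k)_l$, summing first over $l$ via the binomial series $\sum_l\frac{(\delta+k)_l}{l!}w^l=(1-w)^{-\delta-k}$, factoring out $(1-\omega_2 s^{-\beta})^{-\delta}$, and then summing over $k$ via $\sum_k\frac{(\delta)_k}{k!}z^k=(1-z)^{-\delta}$ gives
\[
\sum_{k=0}^{\infty}\sum_{l=0}^{\infty}\frac{(\delta)_{k+l}}{k!\,l!}\left(\frac{\omega_1}{s^{\alpha}}\right)^{k}\left(\frac{\omega_2}{s^{\beta}}\right)^{l}=\left(1-\frac{\omega_1}{s^{\alpha}}-\frac{\omega_2}{s^{\beta}}\right)^{-\delta},
\]
which yields the claimed formula. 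Alternatively, one could shortcut this last step by invoking the corollary to Theorem \ref{Thm:complint} and recognising the Hankel-contour integral as an inverse Laplace transform.

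The main obstacle is not the algebra — the binomial collapse is routine and already appears above — but the analytic justification: one must ensure the termwise Laplace transform is legitimate and that the resulting geometric/binomial series actually converges, i.e. that $|\omega_1 s^{-\alpha}|+|\omega_2 s^{-\beta}|<1$ holds for $\mathrm{Re}(s)$ large enough, so that the identity is established on a right half-plane and then extends to all $\mathrm{Re}(s)>0$ by analytic continuation of both sides. I would state the result as an identity between analytic functions of $s$, proved first for large $\mathrm{Re}(s)$ and then continued, which sidesteps any delicate convergence bookkeeping at the boundary.
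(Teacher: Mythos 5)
Your proposal is correct and follows essentially the same route as the paper's proof: termwise Laplace transformation of the double series \eqref{5} (justified by its locally uniform convergence), cancellation of the gamma factors, and then the same collapse of the double sum via $(\delta)_{k+l}=(\delta)_k(\delta+k)_l$ and the binomial series, with the resulting identity first established for $s$ satisfying the convergence conditions and then extended to all $\mathrm{Re}(s)>0$ by analytic continuation. No substantive differences from the paper's argument.
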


\begin{proof}
Because the double series is locally uniformly convergent, we have the right to integrate it term by term. The Laplace transform of a power function is given by
\[
\mathcal{L}\left[\frac{t^{q-1}}{\Gamma(q)}\right](s)=\frac{1}{s^q},
\]
provided that $\mathrm{Re}(q)>-1$. (This is why we introduced the extra condition $\mathrm{Re}(\gamma)>0$ as well as the standard $\mathrm{Re}(\alpha),\mathrm{Re}(\beta)>0$.) So for the Mittag-Leffler double series we have:
\begin{align*}
\mathcal{L}\left[ t^{\gamma -1}E_{\alpha ,\beta ,\gamma }^{\delta }(\omega_{1}t^{\alpha },\omega _{2}t^{\beta })\right] (s)&=\sum_{k=0}^{\infty }\sum_{l=0}^{\infty }\mathcal{L}\left[\frac{(\delta )_{k+l}\omega_{1}^{k}\omega _{2}^{l}}{\Gamma (\alpha k+\beta l+\gamma )k!l!}t^{\alpha k+\beta l+\gamma -1}\right](s) \\
&=\frac{1}{s^{\gamma }}\sum_{k=0}^{\infty }\sum_{l=0}^{\infty }\frac{(\delta )_{k+l}}{k!l!}\left( \frac{\omega _{1}}{s^{\alpha }}\right)^{k}\left( \frac{\omega _{2}}{s^{\beta }}\right) ^{l} \\
&=\frac{1}{s^{\gamma }}\sum_{k=0}^{\infty }\frac{(\delta )_{k}}{k!}\left( \frac{\omega _{1}}{s^{\alpha }}\right)^{k}\sum_{l=0}^{\infty }\frac{(\delta +k)_{l}}{l!}\left( \frac{\omega _{2}}{s^{\beta }}\right) ^{l} \\
&=\frac{1}{s^{\gamma }}\left( 1-\frac{\omega _{2}}{s^{\beta }}\right)^{-\delta }\sum_{k=0}^{\infty }\frac{(\delta )_{k}}{k!}\left( \frac{\omega_{1}}{s^{\alpha }}\right) ^{k}\left( 1-\frac{\omega _{2}}{s^{\beta }}\right)^{-k} \\
&=\frac{1}{s^{\gamma }}\left( 1-\frac{\omega _{2}}{s^{\beta }}\right)^{-\delta }\left( 1-\frac{\omega _{1}}{s^{\alpha }}\left( 1-\frac{\omega _{2}}{s^{\beta }}\right)^{-1}\right) ^{-\delta } \\
&=\frac{1}{s^{\gamma }}\left( 1-\frac{\omega _{1}}{s^{\alpha }}-\frac{\omega _{2}}{s^{\beta }}\right) ^{-\delta }.
\end{align*}
Note that the argument to simplify the double series here is the same as used in the proof of Theorem \ref{Thm:complint}. We have required extra conditions on $s$, namely $\left\vert \frac{\omega _{2}}{s^{\beta }}\right\vert <1$ and $\left\vert \frac{\omega _{1}}{s^{\alpha }}\left( 1-\frac{\omega _{2}}{s^{\beta }}\right)^{-1}\right\vert <1$, for proper convergence of the series. But these conditions can be removed at the end by analytic continuation, to give the desired result for all $s$ with $\mathrm{Re}(s)>0$.
\end{proof}

We now consider the relationship between the bivariate Mittag-Leffler function and some associated bivariate Laguerre polynomials. This work follows the approach of Prabhakar \cite{prabhakar2} for the univariate Mittag-Leffler function $E_{\alpha,\beta}^{\rho}(x)$, and similar results were found in another bivariate case in \cite{ozarslan-kurt1}.

The original Laguerre polynomials are a family of orthogonal polynomials $L_n$ defined by
\[
L_n(x)=\sum_{k=0}^n\frac{(-1)^k}{k!}\binom{n}{k}x^k
\]
or equivalently as solutions of the Laguerre differential equation $xy''+(1-x)y'+ny=0$. Also known classically are the generalised Laguerre polynomials $L_n^{(\alpha)}$ for $n\in\mathbb{Z}^+_0$ and $\alpha\in(-1,\infty)$, which are used to solve various problems of mathematical physics and quantum mechanics.

Now, we introduce a class of bivariate Laguerre polynomials and related them to the newly defined bivariate Mittag-Leffler function as follows:
\[
L_n^{\alpha,\beta,\gamma}(x,y)=\sum_{l=0}^n\sum_{k=0}^{n-l}\frac{(-n)_{k+l}x^ky^l}{\Gamma(\alpha k+\beta l+\gamma)k!l!}=E_{\alpha,\beta,\gamma}^{-n}(x,y),\quad\quad\alpha,\beta,\gamma\in\mathbb{C}.
\]
In the next theorem, we give the linear generating function for the polynomials $L_{n}^{\alpha,\beta,\gamma}(x,y)$ in terms of the bivariate Mittag-Leffler function $E_{\alpha,\beta,\gamma}^{\delta}(x,y)$.

\begin{theorem}
\label{Thm:Laguerre}
For $\alpha,\beta,\gamma,\delta\in\mathbb{C}$ with $\mathrm{Re}(\alpha),\mathrm{Re}(\beta)>0$ and $\left\vert t\right\vert <1$, we have
\begin{equation*}
\sum_{n=0}^{\infty}\frac{(\delta)_n}{n!}L_n^{\alpha,\beta,\gamma}(x,y)t^n=(1-t)^{-\delta}E_{\alpha,\beta,\gamma}^{\delta}\left( \frac{-xt}{1-t},\frac{-yt}{1-t}\right).
\end{equation*}
\end{theorem}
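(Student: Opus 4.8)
The plan is to substitute the explicit double-sum definition of $L_n^{\alpha,\beta,\gamma}(x,y)$ into the left-hand side, interchange the order of the (now triple) summation, and collapse the innermost sum using the binomial series. First I would write
\[
\sum_{n=0}^{\infty}\frac{(\delta)_n}{n!}L_n^{\alpha,\beta,\gamma}(x,y)t^n=\sum_{n=0}^{\infty}\sum_{l=0}^{n}\sum_{k=0}^{n-l}\frac{(\delta)_n(-n)_{k+l}}{n!}\cdot\frac{x^ky^l}{\Gamma(\alpha k+\beta l+\gamma)k!l!}\,t^n,
\]
and then note that the constraint $k+l\leq n$ lets me reindex by setting $n=k+l+j$ with $j\geq 0$ and summing freely over $k,l,j\geq 0$. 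The key elementary identity is $(-n)_{k+l}=(-1)^{k+l}\,n!/(n-k-l)!$ valid precisely when $k+l\leq n$, which gives $\frac{(\delta)_n}{n!}(-n)_{k+l}=(-1)^{k+l}\frac{(\delta)_{k+l+j}}{j!}$, and then the Pochhammer splitting $(\delta)_{k+l+j}=(\delta)_{k+l}(\delta+k+l)_j$ separates the $j$-dependence.

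After this reindexing the expression becomes
\[
\sum_{k=0}^{\infty}\sum_{l=0}^{\infty}\frac{(\delta)_{k+l}}{\Gamma(\alpha k+\beta l+\gamma)k!l!}(-x)^k(-y)^l\,t^{k+l}\sum_{j=0}^{\infty}\frac{(\delta+k+l)_j}{j!}t^j,
\]
and the inner sum over $j$ is the binomial series $(1-t)^{-(\delta+k+l)}$, convergent since $|t|<1$. Pulling out the factor $(1-t)^{-\delta}$ and absorbing $(1-t)^{-(k+l)}$ into the $k,l$ terms yields exactly $(1-t)^{-\delta}$ times the defining double series of $E_{\alpha,\beta,\gamma}^{\delta}$ evaluated at $\bigl(\tfrac{-xt}{1-t},\tfrac{-yt}{1-t}\bigr)$, which is the claimed identity.

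The main obstacle is justifying the rearrangement of the triple series: I would argue that for $|t|<1$ (and indeed after replacing $x,y,t$ by their absolute values and $\delta$ by a parameter with the same real part, using that $|(\delta)_m|$ is dominated by a constant times $(\mathrm{Re}\,\delta')_m$ for a suitable real $\delta'>0$, or simply restricting first to $\delta>0$ and finishing by analytic continuation in $\delta$) the iterated sum of absolute values converges, since $1/|\Gamma(\alpha k+\beta l+\gamma)|$ decays super-exponentially in $k,l$ by the same Srivastava--Daoust estimate cited in Definition \ref{Def:ourML}, and the geometric factors in $t$ and the $j$-sum are harmless for $|t|<1$. Once absolute convergence is in hand, Fubini/Tonelli legitimizes every interchange above and the binomial summation, and the final equality holds first for $\delta$ in a suitable range and then for all $\delta\in\mathbb{C}$ by analytic continuation, both sides being analytic in $\delta$. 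The rest is the bookkeeping of Pochhammer identities, which is routine.
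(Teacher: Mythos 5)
Your proposal is correct and follows essentially the same route as the paper's own proof: substitute the double-sum definition of $L_n^{\alpha,\beta,\gamma}$, use $(-n)_{k+l}=(-1)^{k+l}n!/(n-k-l)!$, reindex so that $k,l$ run freely (your single shift $n=k+l+j$ is just the paper's two successive shifts $n\to n+l$, $n\to n+k$ done at once), split $(\delta)_{k+l+j}=(\delta)_{k+l}(\delta+k+l)_j$, and sum the inner binomial series $(1-t)^{-\delta-k-l}$ for $|t|<1$. Your extra care about absolute convergence and Fubini is a welcome strengthening of the paper's brief appeal to uniform convergence, but it is not a different method.
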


\begin{proof}
Using the fact that $(-n)_{k+l}=(-1)^{k+l}\frac{n!}{(n-k-l)!}$, we have
\begin{equation*}
\sum_{n=0}^{\infty}\frac{(\delta)_n}{n!}L_n^{\alpha,\beta,\gamma}(x,y)t^n=\sum_{n=0}^{\infty }\sum_{l=0}^{n}\sum_{k=0}^{n-l}\frac{(\delta)_n(-1)^{k+l}}{(n-k-l)!\Gamma (\alpha k+\beta l+\gamma )}\cdot\frac{x^ky^l}{k!l!}t^{n}
\end{equation*}
Noting that $\sum_{n=0}^{\infty}\sum_{l=0}^n=\sum_{l=0}^{\infty}\sum_{n=l}^{\infty}$ and then setting $n\rightarrow n+l$, we get
\begin{equation*}
\sum_{n=0}^{\infty}\frac{(\delta)_n}{n!}L_n^{\alpha,\beta,\gamma}(x,y)t^n=\sum_{l=0}^{\infty}\sum_{n=0}^{\infty}\sum_{k=0}^{n}\frac{(\delta)_{n+l}(-1)^{k+l}}{(n-k)!\Gamma (\alpha k+\beta l+\gamma )}\cdot\frac{x^ky^l}{k!l!}t^{n+l}.
\end{equation*}
Similarly, setting $n\rightarrow n+k$, we get
\begin{equation*}
\sum_{n=0}^{\infty}\frac{(\delta)_n}{n!}L_n^{\alpha,\beta,\gamma}(x,y)t^n=\sum_{l=0}^{\infty}\sum_{k=0}^{\infty}\sum_{n=0}^{\infty}\frac{(\delta)_{n+k+l}(-1)^{k+l}}{(n)!\Gamma (\alpha k+\beta l+\gamma )}\cdot\frac{x^ky^l}{k!l!}t^{n+k+l},
\end{equation*}
where in both cases interchanging the order of summations is guaranteed because of the uniform convergence of the series. From $(\delta)_{n+k+l}=(\delta)_{k+l}(\delta+k+l)_{n}$, we have
\begin{align*}
\sum_{n=0}^{\infty}\frac{(\delta)_n}{n!}L_n^{\alpha,\beta,\gamma}(x,y)t^n &=\sum_{l=0}^{\infty}\sum_{k=0}^{\infty}\frac{(\delta)_{k+l}}{\Gamma (\alpha k+\beta l+\gamma )}\cdot\frac{x^ky^l}{k!l!}(-t)^{k+l}\sum_{n=0}^{\infty}\frac{(\delta+k+l)_{n}}{n!}t^{n} \\
&=(1-t)^{-\delta}\sum_{l=0}^{\infty }\sum_{k=0}^{\infty }\frac{(\delta)_{k+l}}{\Gamma (\alpha k+\beta l+\gamma )k!l!}\left( \frac{-xt}{1-t}\right)^k\left( \frac{-yt}{1-t}\right)^l \\
&=(1-t)^{-\delta}E_{\alpha,\beta,\gamma}^{\delta}\left( \frac{-xt}{1-t},\frac{-yt}{1-t}\right) ,
\end{align*}
where we have used the fact $\sum_{n=0}^{\infty}\frac{(\delta+k+l)_{n}}{n!}t^{n}=(1-t)^{-\delta-k-l}$ which holds under the condition $\left\vert t\right\vert <1$.
\end{proof}

\begin{corollary}
For $\alpha,\beta,\gamma,\delta\in\mathbb{C}$ with $\mathrm{Re}(\alpha),\mathrm{Re}(\beta)>0$ and $\left\vert z\right\vert <1$, we have
\begin{equation*}
\sum_{n=0}^{\infty}\frac{(\delta)_n}{n!}L_n^{\alpha,\beta,\gamma}(t^{\alpha},t^{\beta})z^n=(1-z)^{-\delta}E_{\alpha,\beta,\gamma}^{\delta}\left(\frac{-t^{\alpha }z}{1-z},\frac{-t^{\beta }z}{1-z}\right).
\end{equation*}
\end{corollary}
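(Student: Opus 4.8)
The plan is straightforward: this corollary is the direct specialization of Theorem \ref{Thm:Laguerre} obtained by setting $x = t^{\alpha}$ and $y = t^{\beta}$. First I would rename the formal summation variable in Theorem \ref{Thm:Laguerre} from $t$ to $z$ (the variable $t$ there being merely a dummy variable of the generating series, while $x,y$ range over all of $\mathbb{C}$ subject only to $\mathrm{Re}(\alpha),\mathrm{Re}(\beta)>0$), so that the identity reads
\[
\sum_{n=0}^{\infty}\frac{(\delta)_n}{n!}L_n^{\alpha,\beta,\gamma}(x,y)z^n=(1-z)^{-\delta}E_{\alpha,\beta,\gamma}^{\delta}\left(\frac{-xz}{1-z},\frac{-yz}{1-z}\right),\qquad |z|<1.
\]
Then I substitute $x = t^{\alpha}$ and $y = t^{\beta}$ directly into both sides. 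The left-hand side becomes $\sum_{n=0}^{\infty}\frac{(\delta)_n}{n!}L_n^{\alpha,\beta,\gamma}(t^{\alpha},t^{\beta})z^n$, and the right-hand side becomes $(1-z)^{-\delta}E_{\alpha,\beta,\gamma}^{\delta}\left(\frac{-t^{\alpha}z}{1-z},\frac{-t^{\beta}z}{1-z}\right)$, which is precisely the claimed identity.

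There is essentially no obstacle in this argument; the only point deserving a remark is that the convergence condition $|t|<1$ of Theorem \ref{Thm:Laguerre} simply becomes the stated condition $|z|<1$ after the renaming, and that the substitution $x=t^{\alpha}$, $y=t^{\beta}$ (interpreted via the principal branch, or for $t>0$) does not affect the radius of convergence in the generating variable, since $t$ enters only through the fixed arguments of the Laguerre polynomials and of the bivariate Mittag-Leffler function, not through the summation index $n$. Hence the identity holds for all $t$ for which $t^{\alpha},t^{\beta}$ are defined, together with $|z|<1$ and $\mathrm{Re}(\alpha),\mathrm{Re}(\beta)>0$, as stated.
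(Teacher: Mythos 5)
Your proposal is correct and follows exactly the paper's route: the corollary is obtained simply by specializing Theorem \ref{Thm:Laguerre} with $x=t^{\alpha}$, $y=t^{\beta}$ (with the generating variable relabelled $z$). Your extra remark on why the substitution leaves the condition $|z|<1$ untouched is a harmless elaboration of the same one-line argument.
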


\begin{proof}
This follows directly from Theorem \ref{Thm:Laguerre} when we set $x=t^{\alpha}$ and $y=t^{\beta}$.
\end{proof}

We have already discussed a connection between our Mittag-Leffler functions and fractional calculus, in Theorem \ref{Thm:FDE} and Corollary \ref{Coroll:FDE} above. In the following theorems, we calculate the general fractional differintegral (not only to the specific orders $\alpha,\beta$ matching the parameters) of our Mittag-Leffler function. Similarly to Theorem \ref{Thm:Laplace}, we must use the univariate version \eqref{5} instead of the general bivariate function \eqref{3} for best results, because the gamma function involving both $k$ and $l$ is best associated with a power function involving both $k$ and $l$.

\begin{theorem}
\label{Thm:RL:MLfn}
Let $\alpha ,\beta ,\gamma ,\delta \in\mathbb{C}$ with $\mathrm{Re}(\alpha ),\mathrm{Re}(\beta ),\mathrm{Re}(\gamma )>0$. Then the fractional differintegral of the function \eqref{5} is given by:
\begin{equation*}
\prescript{RL}{c}D_t^{\mu }\left[ (t-c)^{\gamma -1}E_{\alpha,\beta,\gamma}^{\delta}\left( \omega_1(t-c)^{\alpha },\omega_2(t-c)^{\beta }\right) \right]=(t-c)^{\gamma-\mu -1}E_{\alpha,\beta,\gamma-\mu}^{\delta}\left(\omega_1(t-c)^{\alpha},\omega_2(t-c)^{\beta }\right),
\end{equation*}
for all $mu\in\mathbb{C}$ (fractional integral if $\mathrm{Re}(\mu)<0$, or fractional derivative if $\mathrm{Re}(\mu)\geq0$).
\end{theorem}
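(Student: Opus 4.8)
The plan is to work directly from the double-series representation \eqref{5} and apply the Riemann--Liouville operator term by term. Writing the base point at $c$, we have
\[
(t-c)^{\gamma-1}E_{\alpha,\beta,\gamma}^{\delta}\big(\omega_1(t-c)^{\alpha},\omega_2(t-c)^{\beta}\big)=\sum_{k=0}^{\infty}\sum_{l=0}^{\infty}\frac{(\delta)_{k+l}\,\omega_1^k\omega_2^l}{k!\,l!}\cdot\frac{(t-c)^{\alpha k+\beta l+\gamma-1}}{\Gamma(\alpha k+\beta l+\gamma)}.
\]
Applying $\prescript{RL}{c}D_t^{\mu}$ and (pending justification) moving it inside the sum, one uses the shifted version of the power rule \eqref{power:RL},
\[
\prescript{RL}{c}D_t^{\mu}\left(\frac{(t-c)^{\alpha k+\beta l+\gamma-1}}{\Gamma(\alpha k+\beta l+\gamma)}\right)=\frac{(t-c)^{\alpha k+\beta l+\gamma-\mu-1}}{\Gamma(\alpha k+\beta l+\gamma-\mu)},
\]
which is legitimate for every $k,l\geq 0$ because the hypothesis $\mathrm{Re}(\gamma)>0$ together with $\mathrm{Re}(\alpha),\mathrm{Re}(\beta)>0$ guarantees $\mathrm{Re}(\alpha k+\beta l+\gamma-1)>-1$ uniformly. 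Reassembling, the factor $(\delta)_{k+l}/(k!\,l!)$ is untouched and only $\gamma$ in the denominator has been replaced by $\gamma-\mu$, so the sum is exactly $(t-c)^{\gamma-\mu-1}E_{\alpha,\beta,\gamma-\mu}^{\delta}\big(\omega_1(t-c)^{\alpha},\omega_2(t-c)^{\beta}\big)$. This new series converges absolutely and locally uniformly by the same Srivastava--Daoust criterion invoked in Definition \ref{Def:ourML}, since convergence depends only on $\mathrm{Re}(\alpha),\mathrm{Re}(\beta)>0$ and not on the shifted third parameter.

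The only real content beyond this bookkeeping is justifying the interchange of $\prescript{RL}{c}D_t^{\mu}$ with the infinite double sum, and this is where the two cases $\mathrm{Re}(\mu)<0$ and $\mathrm{Re}(\mu)\geq 0$ must be treated separately. For $\mathrm{Re}(\mu)<0$ the operator is the fractional integral $\prescript{RL}{c}I_t^{-\mu}$, an honest convolution against $(t-\xi)^{-\mu-1}/\Gamma(-\mu)$; term-by-term integration is then immediate from the local uniform (indeed absolute) convergence of the series on compact subsets of $[c,\infty)$ together with dominated convergence. For $\mathrm{Re}(\mu)\geq 0$ one writes $\prescript{RL}{c}D_t^{\mu}=\dfrac{\mathrm{d}^n}{\mathrm{d}t^n}\,\prescript{RL}{c}I_t^{n-\mu}$ with $n=\lfloor\mathrm{Re}(\mu)\rfloor+1$: the fractional integral passes through the sum as in the first case, producing the locally uniformly convergent series for $(t-c)^{\gamma+n-\mu-1}E_{\alpha,\beta,\gamma+n-\mu}^{\delta}(\cdots)$, whose termwise $n$-th derivatives again form a locally uniformly convergent series (once more a bivariate Mittag-Leffler-type double series with $\mathrm{Re}(\alpha),\mathrm{Re}(\beta)>0$); hence ordinary $n$-fold differentiation also passes through the sum, and composing the two steps recovers the termwise identity above. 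The case $\mu=0$ is trivial.

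I would state this convergence/interchange argument once and apply it in both regimes, so that the body of the proof reduces to the short computation of the preceding paragraph. I do not anticipate any genuine obstacle here — the matching between the exponent $\alpha k+\beta l+\gamma-1$ of the power function and the argument $\alpha k+\beta l+\gamma$ of the gamma function on the denominator is precisely what makes the univariate form \eqref{5} (rather than the fully bivariate \eqref{3}) the right object, as already flagged in the discussion preceding the theorem, and it is what makes the fractional differintegral act so cleanly by a single parameter shift $\gamma\mapsto\gamma-\mu$.
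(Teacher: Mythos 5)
Your proposal is correct and follows essentially the same route as the paper: apply the power rule \eqref{power:RL} term by term to the double series \eqref{5}, observe that only $\gamma$ shifts to $\gamma-\mu$, and justify the interchange by the (local uniform) convergence of both the original and the resulting series — the paper handles the interchange simply by citing the standard facts on termwise fractional integration and differentiation of uniformly convergent series, where you spell the argument out via the splitting $\prescript{RL}{c}D_t^{\mu}=\frac{\mathrm{d}^n}{\mathrm{d}t^n}\prescript{RL}{c}I_t^{n-\mu}$. No gaps; your version is just slightly more explicit on the justification step.
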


\begin{proof}
First we recall the well-known results \cite[\S5.2]{oldham-spanier} that a uniformly convergent series of functions can be fractionally integrated term by term, and that it can be fractionally differentiated term by term provided that the series of fractional derivatives is also uniformly convergent.

Now, fractionally differintegrating the series \eqref{5} term by term gives
\begin{align*}
&\prescript{RL}{c}D_t^{\mu }\left[ (t-c)^{\gamma -1}E_{\alpha,\beta,\gamma}^{\delta}\left( \omega_1(t-c)^{\alpha },\omega_2(t-c)^{\beta }\right) \right] \\
&\hspace{2cm}=\sum_{k=0}^{\infty }\sum_{l=0}^{\infty }\frac{(\delta )_{k+l}\omega_1^k\omega_2^l}{k!l!}\prescript{RL}{c}D_t^{\mu }\left[\frac{(t-c)^{\alpha k+\beta l+\gamma -1}}{\Gamma (\alpha k+\beta l+\gamma )}\right] \\
&\hspace{2cm}=\sum_{k=0}^{\infty }\sum_{l=0}^{\infty }\frac{(\delta )_{k+l}\omega_1^k\omega_2^l}{k!l!}\cdot\frac{(t-c)^{\alpha k+\beta l+\gamma-\mu-1}}{\Gamma (\alpha k+\beta l+\gamma-\mu)} \\
&\hspace{2cm}=(t-c)^{\gamma +\mu -1}\sum_{k=0}^{\infty }\sum_{l=0}^{\infty }\frac{(\delta )_{k+l}}{\Gamma (\alpha k+\beta l+\gamma-\mu )}\frac{\left( \omega_{1}(t-c)^{\alpha }\right) ^{k}}{k!}\frac{\left( \omega _{2}(t-c)^{\beta}\right) ^{l}}{l!} \\
&\hspace{2cm}=(t-c)^{\gamma-\mu -1}E_{\alpha ,\beta ,\gamma-\mu }^{\delta }\left(\omega_1(t-c)^{\alpha },\omega_2(t-c)^{\beta }\right) ,
\end{align*}
where the resulting series is also uniformly convergent. Therefore fractionally differintegrating the whole series works as stated. Here we have used the fact \eqref{power:RL} on fractional differintegration of power functions with exponent having real part greater than $-1$; the latter restriction is why we need the extra assumption $\mathrm{Re}(\gamma)>0$ as well as the standard $\mathrm{Re}(\alpha),\mathrm{Re}(\beta)>0$.
\end{proof}

Finally, we prove a result on convolutions which will be useful in the next section.

\begin{theorem}[Convolution result]
\label{Thm:convolution}
Let $\alpha ,\beta ,\gamma_1,\gamma_2,\delta_1,\delta_2\in 
\mathbb{C}$ with $\mathrm{Re}(\alpha ),\mathrm{Re}(\beta ),\mathrm{Re}(\gamma_j)>0$. Then,
\[
\Big[t^{\gamma_1-1}E_{\alpha ,\beta ,\gamma_1}^{\delta_1}\left( \omega_1t^{\alpha },\omega_2t^{\beta }\right)\Big] \ast \Big[t^{\gamma_2-1}E_{\alpha,\beta ,\gamma_2}^{\delta_2}\left( \omega_1t^{\alpha },\omega_2t^{\beta }\right)\Big]=t^{\gamma_1+\gamma_2 -1}E_{\alpha ,\beta ,\gamma_1+\gamma_2 }^{\delta_1+\delta_2}\left( \omega_1t^{\alpha },\omega_2t^{\beta }\right).
\]
\end{theorem}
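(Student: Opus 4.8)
The plan is to lean on Theorem~\ref{Thm:Laplace} together with the fact that the Laplace transform converts convolution into a product. Applying $\mathcal{L}$ to the left-hand side and using $\mathcal{L}[f\ast g](s)=\mathcal{L}[f](s)\,\mathcal{L}[g](s)$, Theorem~\ref{Thm:Laplace} (used once with the parameters $\gamma_1,\delta_1$ and once with $\gamma_2,\delta_2$) gives
\[
\mathcal{L}\Big[t^{\gamma_1-1}E_{\alpha,\beta,\gamma_1}^{\delta_1}\big(\omega_1t^{\alpha},\omega_2t^{\beta}\big)\ast t^{\gamma_2-1}E_{\alpha,\beta,\gamma_2}^{\delta_2}\big(\omega_1t^{\alpha},\omega_2t^{\beta}\big)\Big](s)=\frac{1}{s^{\gamma_1+\gamma_2}}\Big(1-\tfrac{\omega_1}{s^{\alpha}}-\tfrac{\omega_2}{s^{\beta}}\Big)^{-\delta_1-\delta_2}
\]
for $\mathrm{Re}(s)$ large, since the two factors $s^{-\gamma_j}\big(1-\omega_1s^{-\alpha}-\omega_2s^{-\beta}\big)^{-\delta_j}$ just multiply. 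By Theorem~\ref{Thm:Laplace} once more, now with parameters $\gamma_1+\gamma_2$ and $\delta_1+\delta_2$, the right-hand side is exactly $\mathcal{L}\big[t^{\gamma_1+\gamma_2-1}E_{\alpha,\beta,\gamma_1+\gamma_2}^{\delta_1+\delta_2}(\omega_1t^{\alpha},\omega_2t^{\beta})\big](s)$. Since each $t^{\gamma_j-1}E_{\alpha,\beta,\gamma_j}^{\delta_j}(\omega_1t^{\alpha},\omega_2t^{\beta})$ is locally integrable on $(0,\infty)$ — the exponents satisfy $\mathrm{Re}(\gamma_j)-1>-1$, which is where the hypothesis $\mathrm{Re}(\gamma_j)>0$ is needed — and of exponential order, so is their convolution, and injectivity of the Laplace transform (Lerch's theorem) then forces the two continuous functions to coincide. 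One should be mildly careful that the transforms in Theorem~\ref{Thm:Laplace} converge only on a right half-plane depending on $\omega_1,\omega_2,\alpha,\beta$, so the displayed computation is an identity of holomorphic functions there.

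If a proof not relying on Laplace uniqueness is preferred, one can compute the convolution directly: write it as $\int_0^t$, substitute the double series \eqref{5} for each factor, and interchange the (locally uniformly convergent) summations with the integral, so that the inner integral becomes a Beta integral
\[
\int_0^t\xi^{\alpha k_1+\beta l_1+\gamma_1-1}(t-\xi)^{\alpha k_2+\beta l_2+\gamma_2-1}\,\mathrm{d}\xi=t^{\alpha(k_1+k_2)+\beta(l_1+l_2)+\gamma_1+\gamma_2-1}\,B\big(\alpha k_1+\beta l_1+\gamma_1,\ \alpha k_2+\beta l_2+\gamma_2\big),
\]
convergent precisely because $\mathrm{Re}(\gamma_j)>0$. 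Writing $B(p,q)=\Gamma(p)\Gamma(q)/\Gamma(p+q)$, the two gamma functions in the denominators of the two Mittag-Leffler series cancel against $\Gamma(p)\Gamma(q)$ and leave $\Gamma\big(\alpha(k_1+k_2)+\beta(l_1+l_2)+\gamma_1+\gamma_2\big)$. Setting $k=k_1+k_2$, $l=l_1+l_2$ and collecting terms, the statement reduces to $\sum_{m=0}^{N}\binom{N}{m}(\delta_1)_m(\delta_2)_{N-m}=(\delta_1+\delta_2)_N$ with $N=k+l$ — obtained after first using the ordinary Vandermonde identity $\sum_{k_1+l_1=m}\binom{k}{k_1}\binom{l}{l_1}=\binom{k+l}{m}$ to merge the $k$- and $l$-sums — and this Pochhammer–Vandermonde identity is itself just coefficient comparison in $(1-t)^{-\delta_1}(1-t)^{-\delta_2}=(1-t)^{-\delta_1-\delta_2}$.

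I expect the only genuine obstacle to be bookkeeping: in the Laplace route, keeping track of the convergence half-planes and citing a precise uniqueness theorem; in the direct route, organising the quadruple sum and extracting the Vandermonde-type identity. Neither is deep. I would present the Laplace-transform proof as the main argument, since Theorem~\ref{Thm:Laplace} is already in hand, and perhaps note that the direct series computation provides an independent verification.
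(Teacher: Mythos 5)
Your primary Laplace-transform argument is precisely the paper's own proof: apply Theorem \ref{Thm:Laplace} to each factor, use the convolution theorem to multiply the transforms into $s^{-\gamma_1-\gamma_2}\left(1-\omega_1s^{-\alpha}-\omega_2s^{-\beta}\right)^{-\delta_1-\delta_2}$, recognise this as the transform with parameters $\gamma_1+\gamma_2,\delta_1+\delta_2$, and invert. The supplementary direct computation via the Beta integral and the Vandermonde identity for Pochhammer symbols is a correct independent check, but it is not needed and the paper does not include it.
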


\begin{proof}
Using Theorem \ref{Thm:Laplace} and the convolution theorem for the Laplace transform, we have:
\begin{align*}
&\mathcal{L}\left\{ \Big[t^{\gamma_1-1}E_{\alpha ,\beta ,\gamma_1}^{\delta_1}\left( \omega_1t^{\alpha },\omega_2t^{\beta }\right)\Big] \ast \Big[t^{\gamma_2-1}E_{\alpha,\beta ,\gamma_2}^{\delta_2}\left( \omega_1t^{\alpha },\omega_2t^{\beta }\right)\Big]\right\} (s) \\
&=\mathcal{L}\Big[t^{\gamma_1-1}E_{\alpha ,\beta ,\gamma_1}^{\delta_1}\left( \omega_1t^{\alpha },\omega_2t^{\beta }\right)\Big](s) \mathcal{L}\Big[t^{\gamma_2-1}E_{\alpha,\beta ,\gamma_2}^{\delta_2}\left( \omega_1t^{\alpha },\omega_2t^{\beta }\right)\Big](s) \\
&=\frac{1}{s^{\gamma_1}}\left( 1-\frac{\omega_1}{s^{\alpha}}-\frac{\omega_2}{s^{\beta}}\right)^{-\delta_1}\frac{1}{s^{\gamma_2}}\left(1-\frac{\omega_1}{s^{\alpha }}-\frac{\omega_2}{s^{\beta }}\right)^{-\delta_2} \\
&=\frac{1}{s^{\gamma_1+\gamma_2}}\left( 1-\frac{\omega_1}{s^{\alpha }}-\frac{\omega_2}{s^{\beta }}\right) ^{-(\delta_1+\delta_2)} \\
&=\mathcal{L}\left\{ x^{\gamma_1+\gamma_2-1}E_{\alpha,\beta,\gamma_1+\gamma_2}^{\delta_1+\delta_2}\left(\omega_1x^{\alpha},\omega_2x^{\beta}\right)\right\} (s)
\end{align*}
Taking inverse Laplace transforms, we obtain the result.
\end{proof}

\section{The associated fractional-calculus operators} \label{Sec:op}

In this section, we define a family of fractional-calculus operators by using the bivariate Mittag-Leffler functions defined above. These operators have been discovered to emerge naturally from experimental data, and we analyse their mathematical properties here so that they can be more usefully applied in practice.

\subsection{The fractional integral operator}

\begin{definition}
\label{Def:fracint}
We define a fractional integral operator via convolution of an input function $f$ with our bivariate Mittag-Leffler function taken in the univariate form \eqref{5}, using the following notation:
\begin{equation}
\label{4}
\Big({}_c\mathfrak{I}_{\alpha,\beta,\gamma}^{\delta;\omega_1,\omega_2}f\Big)(x)=\int_c^x(x-\xi)^{\gamma-1}E_{\alpha,\beta ,\gamma}^{\delta }\left( \omega_1(x-\xi)^{\alpha},\omega_2(x-\xi)^{\beta }\right)f(\xi)\,\mathrm{d}\xi,
\end{equation}
where we assume that $x>c$ in $\mathbb{R}$ and the parameters $\alpha,\beta,\gamma,\delta,\omega_1,\omega_2\in\mathbb{C}$ satisfy $\mathrm{Re}(\alpha),\mathrm{Re}(\beta),\mathrm{Re}(\gamma)>0$. (The restrictions $\mathrm{Re}(\alpha),\mathrm{Re}(\beta)>0$ are always required for this Mittag-Leffler function, and the extra restriction $\mathrm{Re}(\gamma)>0$ is to avoid a non-integrable singularity at the endpoint $\xi=x$.) As a function space for $f$, we may use the set $L^1(c,d)$ thanks to Theorem \ref{thm2} below.
\end{definition}

\begin{remark}
In the case $\delta=0$, the integral operator \eqref{4} coincides with the original Riemann--Liouville fractional integral, that is
\begin{equation}
{}_c\mathfrak{I}_{\alpha,\beta,\gamma}^{\delta;\omega_1,\omega_2}f(x)=\prescript{RL}{c}I_x^{\gamma}f(x). \label{6}
\end{equation}
The reason for this is that when $\delta=0$, the entire double series \eqref{3} collapses to a single constant term $E_{\alpha,\beta,\gamma}^{0}(x,y)=1$.
\end{remark}

Now, we will write the integral operator (\ref{4}) in terms of Riemann--Liouville fractional integrals using a series formula. This strategy has been used before \cite{baleanu-fernandez,fernandez-baleanu-srivastava,fernandez-ozarslan-baleanu} to prove useful facts about many fractional-calculus operators by reducing them to the Riemann--Liouville case.

\begin{theorem}
\label{Thm:seriesformula}
For $\alpha ,\beta ,\gamma ,\delta ,\omega_1,\omega_2\in \mathbb{C}$ with $\mathrm{Re}(\alpha ),\mathrm{Re}(\beta ),\mathrm{Re}(\gamma )>0$, and for any function $f\in L^1(c,d)$, the fractional integral operator \eqref{4} can be written as
\begin{equation}
\label{seriesformula}
\Big({}_c\mathfrak{I}_{\alpha,\beta,\gamma}^{\delta;\omega_1,\omega_2}f\Big)(x)=\sum_{k=0}^{\infty }\sum_{l=0}^{\infty }\frac{(\delta )_{k+l}\omega_1^k\omega_2^l}{k!l!}\left(\prescript{RL}{c}I_x^{\alpha k+\beta l+\gamma }f\right) (x),
\end{equation}
where the series on the right-hand side is locally uniformly convergent.
\end{theorem}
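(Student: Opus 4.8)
The plan is to substitute the defining double series \eqref{5} for the Mittag-Leffler kernel directly into the convolution integral \eqref{4}, and then interchange the order of summation and integration. Concretely, writing the kernel as
\[
(x-\xi)^{\gamma-1}E_{\alpha,\beta,\gamma}^{\delta}\big(\omega_1(x-\xi)^{\alpha},\omega_2(x-\xi)^{\beta}\big)=\sum_{k=0}^{\infty}\sum_{l=0}^{\infty}\frac{(\delta)_{k+l}\omega_1^k\omega_2^l}{k!\,l!}\cdot\frac{(x-\xi)^{\alpha k+\beta l+\gamma-1}}{\Gamma(\alpha k+\beta l+\gamma)},
\]
and inserting this into \eqref{4}, each term of the series produces exactly a Riemann--Liouville fractional integral: by the definition \eqref{RLorig:defint},
\[
\frac{1}{\Gamma(\alpha k+\beta l+\gamma)}\int_c^x(x-\xi)^{\alpha k+\beta l+\gamma-1}f(\xi)\,\mathrm{d}\xi=\big(\prescript{RL}{c}I_x^{\alpha k+\beta l+\gamma}f\big)(x),
\]
which is valid because $\mathrm{Re}(\alpha k+\beta l+\gamma)\geq\mathrm{Re}(\gamma)>0$ for every $k,l$. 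Collecting the terms then yields \eqref{seriesformula}.

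The substantive step is justifying the interchange of $\sum_{k,l}$ with $\int_c^x$, and for an $L^1$ input function this needs a little care rather than a bare appeal to uniform convergence on a compact set (since $f$ need not be bounded). I would use the Fubini--Tonelli theorem: it suffices to show that
\[
\sum_{k=0}^{\infty}\sum_{l=0}^{\infty}\frac{|(\delta)_{k+l}|\,|\omega_1|^k|\omega_2|^l}{k!\,l!\,|\Gamma(\alpha k+\beta l+\gamma)|}\int_c^x|x-\xi|^{\mathrm{Re}(\alpha k+\beta l+\gamma)-1}\,|f(\xi)|\,\mathrm{d}\xi<\infty.
\]
Since $x>c$ is fixed and finite, $|x-\xi|^{\mathrm{Re}(\alpha k+\beta l+\gamma)-1}$ is, for all but finitely many $(k,l)$, bounded on $(c,x)$ by a constant of the form $\max(1,(x-c)^{\mathrm{Re}(\alpha k+\beta l+\gamma)-1})$, so the $\xi$-integral is dominated by $C_{k,l}\|f\|_{L^1(c,x)}$ with $C_{k,l}$ growing at most geometrically in $k+l$; the remaining scalar double series then converges by the same Srivastava--Daoust / Lauricella criterion invoked in Definition \ref{Def:ourML}, exactly because the $\Gamma(\alpha k+\beta l+\gamma)$ in the denominator dominates all the numerator factors. (One should split off the finitely many terms where $\mathrm{Re}(\alpha k+\beta l+\gamma)<1$ and handle them separately, using that each such individual integral is finite since $\mathrm{Re}(\gamma)>0$ guarantees $\mathrm{Re}(\alpha k+\beta l+\gamma-1)>-1$.) This bound simultaneously establishes the local uniform convergence of the right-hand side of \eqref{seriesformula} claimed in the statement, by the Weierstrass $M$-test applied on any compact $x$-interval.

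The main obstacle is therefore purely the convergence bookkeeping: ensuring the dominating series is finite uniformly for $x$ in a compact set while $f$ ranges over $L^1$, and correctly treating the finitely many exponents with real part below $1$. Everything else — the term-by-term identification with $\prescript{RL}{c}I_x^{\alpha k+\beta l+\gamma}f$ and the reassembly of the series — is routine once the interchange is licensed. Alternatively, and perhaps more cleanly, one could first prove the identity for $f$ in a dense subspace (say continuous functions, or simple functions) where the interchange is immediate, and then extend to all of $L^1(c,d)$ by continuity, using that both sides of \eqref{seriesformula} define bounded operators on $L^1(c,d)$ — the left side by the estimate in Theorem \ref{thm2}, and the right side by the summed operator-norm bound just described.
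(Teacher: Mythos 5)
Your proposal follows essentially the same route as the paper: substitute the double series for the kernel into \eqref{4}, interchange summation and integration, and identify each term with $\left(\prescript{RL}{c}I_x^{\alpha k+\beta l+\gamma }f\right)(x)$. The paper justifies the interchange with a one-line appeal to locally uniform convergence of the Mittag-Leffler series, whereas you carry out the Fubini--Tonelli bookkeeping explicitly (essentially the estimate of Theorem \ref{thm2}); this is a sound, indeed more careful, version of the same argument rather than a different approach.
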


\begin{proof}
Since the bivariate Mittag-Leffler function (\ref{3}) is convergent locally uniformly when $\mathrm{Re}(\alpha )>0$ and $\mathrm{Re}(\beta )>0$, we can interchange the summation and integration as follows:
\begin{align*}
\Big({}_c\mathfrak{I}_{\alpha,\beta,\gamma}^{\delta;\omega_1,\omega_2}f\Big)(x)&=\int_c^x(x-\xi)^{\gamma -1}E_{\alpha,\beta ,\gamma}^{\delta}\left( \omega_1(x-\xi)^{\alpha },\omega_2(x-\xi)^{\beta }\right) f(\xi)\,\mathrm{d}t \\
&=\sum_{k=0}^{\infty }\sum_{l=0}^{\infty }\frac{(\delta )_{k+l}\omega_1^k\omega _2^l}{\Gamma (\alpha k+\beta l+\gamma )k!l!}\int_c^x(x-\xi)^{\alpha k+\beta l+\gamma -1}f(\xi)\,\mathrm{d}\xi \\
&=\sum_{k=0}^{\infty }\sum_{l=0}^{\infty }\frac{(\delta )_{k+l}\omega_1^k\omega_2^l}{k!l!}\left(\prescript{RL}{c}I_x^{\alpha k+\beta l+\gamma}f\right)(x).
\end{align*}
Note that the gamma function on the denominator matches exactly with the power function in the kernel, so that we recover the Riemann--Liouville integral without an extra gamma function quotient multiplier. Note also that the condition $\mathrm{Re}(\gamma)>0$ ensures we are always dealing with Riemann--Liouville \textit{integrals} and not derivatives here.
\end{proof}


\begin{theorem} \label{thm2}
Let $\alpha ,\beta ,\gamma ,\delta ,\omega_1,\omega_2\in \mathbb{C}$ with $\mathrm{Re}(\alpha ),\mathrm{Re}(\beta ),\mathrm{Re}(\gamma )>0$. The fractional integral operator ${}_c\mathfrak{I}_{\alpha,\beta,\gamma}^{\delta;\omega_1,\omega_2}$ is bounded on the space $L^{1}(c,d)$, such that
\begin{equation*}
\left\Vert{}_c\mathfrak{I}_{\alpha,\beta,\gamma}^{\delta;\omega_1,\omega_2}f\right\Vert_1\leq A\left\Vert f\right\Vert_1,\quad\quad f\in L^1(c,d),
\end{equation*}
where $A$ is a constant (independent of $f$) given by
\begin{equation*}
A=\sum_{k=0}^{\infty }\sum_{l=0}^{\infty }\frac{\left\vert (\delta )_{k+l}\right\vert|\omega_1|^k|\omega_2|^l}{\mathrm{Re}(\alpha k+\beta l+\gamma)\left\vert \Gamma (\alpha k+\beta l+\gamma )\right\vert }\cdot\frac{(d-c)^{\mathrm{Re}(\alpha k+\beta l+\gamma)}}{k!l!}.
\end{equation*}
\end{theorem}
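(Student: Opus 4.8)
The plan is to use the series representation from Theorem~\ref{Thm:seriesformula} and the well-known $L^1$-boundedness of the Riemann--Liouville fractional integral, then sum up the resulting bounds. First I would recall that for $\mathrm{Re}(\nu)>0$ the operator $\prescript{RL}{c}I_x^{\nu}$ maps $L^1(c,d)$ to itself with
\[
\left\Vert \prescript{RL}{c}I_x^{\nu}f\right\Vert_1\leq\frac{(d-c)^{\mathrm{Re}(\nu)}}{\mathrm{Re}(\nu)\,|\Gamma(\nu)|}\,\Vert f\Vert_1,
\]
which follows by Fubini's theorem applied to $\int_c^d\big|\int_c^x(x-\xi)^{\nu-1}f(\xi)\,\mathrm{d}\xi\big|\,\mathrm{d}x$, bounding $|(x-\xi)^{\nu-1}|=(x-\xi)^{\mathrm{Re}(\nu)-1}$ and integrating $x$ from $\xi$ to $d$.

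Next I would apply the triangle inequality for the $L^1$-norm to the series formula \eqref{seriesformula}, term by term, using $\nu=\alpha k+\beta l+\gamma$ (which has positive real part under the hypotheses, so each term is genuinely a Riemann--Liouville integral). This gives
\[
\left\Vert{}_c\mathfrak{I}_{\alpha,\beta,\gamma}^{\delta;\omega_1,\omega_2}f\right\Vert_1\leq\sum_{k=0}^{\infty}\sum_{l=0}^{\infty}\frac{|(\delta)_{k+l}|\,|\omega_1|^k|\omega_2|^l}{k!l!}\cdot\frac{(d-c)^{\mathrm{Re}(\alpha k+\beta l+\gamma)}}{\mathrm{Re}(\alpha k+\beta l+\gamma)\,|\Gamma(\alpha k+\beta l+\gamma)|}\,\Vert f\Vert_1=A\Vert f\Vert_1,
\]
which is exactly the claimed inequality with the stated constant $A$.

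The one genuine obstacle is justifying that $A$ is finite, i.e.\ that the double series defining $A$ converges, so that the term-by-term estimate is legitimate and the bound is nontrivial. I would handle this by comparing with the series defining the bivariate Mittag-Leffler function: the factor $\frac{1}{\mathrm{Re}(\alpha k+\beta l+\gamma)}$ is eventually bounded (it tends to $0$), and the ratio $\frac{(d-c)^{\mathrm{Re}(\alpha k+\beta l+\gamma)}}{|\Gamma(\alpha k+\beta l+\gamma)|}$ behaves, up to harmless factors, like a term of an absolutely convergent Mittag-Leffler-type series in the variables $|\omega_1|(d-c)^{\mathrm{Re}(\alpha)}$ and $|\omega_2|(d-c)^{\mathrm{Re}(\beta)}$; the same Srivastava--Daoust convergence criterion invoked in Definition~\ref{Def:ourML} (controlled by $\mathrm{Re}(\alpha),\mathrm{Re}(\beta)>0$) then guarantees convergence. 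Finally I would note that local uniform convergence of the series in \eqref{seriesformula}, already established in Theorem~\ref{Thm:seriesformula}, together with finiteness of $A$, makes the interchange of norm and summation rigorous, completing the proof.
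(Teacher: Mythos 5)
Your argument is correct, and it reaches exactly the paper's constant $A$, but it is packaged differently. The paper works directly from Definition \ref{Def:fracint}: it writes $\left\Vert{}_c\mathfrak{I}_{\alpha,\beta,\gamma}^{\delta;\omega_1,\omega_2}f\right\Vert_1$ as a double integral, applies Fubini--Tonelli, substitutes $t=x-\xi$ and enlarges the inner range to $(0,d-c)$, bounds the Mittag-Leffler kernel by its absolute series, and integrates the power functions term by term, so the bound $\int_0^{d-c}t^{\mathrm{Re}(\alpha k+\beta l+\gamma)-1}\,\mathrm{d}t=\frac{(d-c)^{\mathrm{Re}(\alpha k+\beta l+\gamma)}}{\mathrm{Re}(\alpha k+\beta l+\gamma)}$ appears inside the kernel estimate rather than as an operator-norm bound. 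You instead modularise: you quote the series formula \eqref{seriesformula} from Theorem \ref{Thm:seriesformula}, bound each term by the standard $L^1$ operator bound $\Vert\prescript{RL}{c}I_x^{\nu}f\Vert_1\leq\frac{(d-c)^{\mathrm{Re}(\nu)}}{\mathrm{Re}(\nu)\,|\Gamma(\nu)|}\Vert f\Vert_1$ with $\nu=\alpha k+\beta l+\gamma$ (legitimate since $\mathrm{Re}(\nu)\geq\mathrm{Re}(\gamma)>0$), and sum. The underlying computations are the same (your RL bound is proved by exactly the Fubini-plus-power-integral step the paper performs inline), so nothing essentially new is needed; what your route buys is transparency — each term of $A$ is visibly the operator norm bound of one Riemann--Liouville integral — and reusability of Theorem \ref{Thm:seriesformula}, at the small cost of having to justify exchanging the $L^1$ norm with the infinite sum, which you do via the finiteness of $A$ and the convergence established in Theorem \ref{Thm:seriesformula}. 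You are also more careful than the paper on one point: you explicitly verify that $A<\infty$, by absorbing the bounded factor $1/\mathrm{Re}(\alpha k+\beta l+\gamma)\leq 1/\mathrm{Re}(\gamma)$ and comparing with the absolutely convergent Mittag-Leffler series of Definition \ref{Def:ourML} evaluated at $|\omega_1|(d-c)^{\mathrm{Re}(\alpha)}$, $|\omega_2|(d-c)^{\mathrm{Re}(\beta)}$; the paper leaves this implicit.
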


\begin{proof}
Let $f\in L^1(c,d)$. By using the definitions of the operator \eqref{4} and the $L^1$ space, we have
\begin{align*}
\left\Vert{}_c\mathfrak{I}_{\alpha,\beta,\gamma}^{\delta;\omega_1,\omega_2}f\right\Vert_1 &=\int_c^d\left\vert\int_c^x(x-\xi)^{\gamma -1}E_{\alpha ,\beta,\gamma}^{\delta }\left(\omega_1(x-\xi)^{\alpha },\omega_2(x-\xi)^{\beta }\right)f(t)\,\mathrm{d}\xi\right\vert\,\mathrm{d}x \\
&\leq \int_c^d\int_\xi^d(x-\xi)^{\mathrm{Re}(\gamma )-1}\left\vert E_{\alpha ,\beta,\gamma}^{\delta }\left( \omega_1(x-\xi)^{\alpha },\omega_2(x-\xi)^{\beta }\right) \right\vert \left\vert f(\xi)\right\vert\,\mathrm{d}x\,\mathrm{d}\xi \\
&=\int_c^d\int_{0}^{d-\xi}t^{\mathrm{Re}(\gamma )-1}\left\vert E_{\alpha,\beta,\gamma}^{\delta }\left( \omega_1t^{\alpha },\omega_2t^{\beta}\right) \right\vert \left\vert f(\xi)\right\vert \,\mathrm{d}t\,\mathrm{d}\xi \\
&\leq \int_c^d\int_{0}^{d-c}t^{\mathrm{Re}(\gamma )-1}\left\vert \sum_{k=0}^{\infty }\sum_{l=0}^{\infty }\frac{(\delta )_{k+l}}{\Gamma(\alpha k+\beta l+\gamma )}\cdot\frac{\omega_1^k\omega_2^l}{k!l!}\;t^{\alpha k+\beta l}\right\vert \left\vert f(\xi)\right\vert \,\mathrm{d}t\,\mathrm{d}\xi \\
&\leq \sum_{k=0}^{\infty }\sum_{l=0}^{\infty }\frac{\left\vert (\delta)_{k+l}\right\vert\left\vert \omega_1\right\vert^k\left\vert \omega_2\right\vert^l}{\left\vert \Gamma (\alpha k+\beta l+\gamma )\right\vert k!l!}\int_{0}^{d-c}t^{\mathrm{Re}(\alpha k+\beta l+\gamma)-1}\,\mathrm{d}t\left\Vert f\right\Vert_1 \\
&=\sum_{k=0}^{\infty }\sum_{l=0}^{\infty }\frac{\left\vert (\delta)_{k+l}\right\vert\left\vert \omega_1\right\vert^k\left\vert \omega_2\right\vert^l}{\left\vert \Gamma (\alpha k+\beta l+\gamma )\right\vert k!l!}\cdot\frac{(d-c)^{\mathrm{Re}(\alpha k+\beta l+\gamma)}}{\mathrm{Re}(\alpha k+\beta l+\gamma)}\left\Vert f\right\Vert _1,
\end{align*}
which is the end of the proof.
\end{proof}

\begin{corollary}
For $\alpha,\beta,\gamma,\delta,\omega_1,\omega_2\in\mathbb{C}$ with $\mathrm{Re}(\alpha),\mathrm{Re}(\beta),\mathrm{Re}(\gamma)>0$, the integral operator \eqref{4} with bivariate Mittag-Leffler kernel interacts with the standard Riemann--Liouville fractional integral or derivative in the following way.
\begin{align*}
\left(\prescript{RL}{c}I_x^{\mu}\left({}_c\mathfrak{I}_{\alpha,\beta,\gamma}^{\delta;\omega_1,\omega_2}f\right)\right)(x)&=\Big({}_c\mathfrak{I}_{\alpha,\beta,\gamma+\mu}^{\delta;\omega_1,\omega_2}f\Big)(x),\quad\quad\mu\in\mathbb{C}; \\
\left( {}_c\mathfrak{I}_{\alpha,\beta,\gamma}^{\delta;\omega_1,\omega_2}\left(\prescript{RL}{c}I_x^{\mu}f\right)\right)(x)&=\Big({}_c\mathfrak{I}_{\alpha,\beta,\gamma+\mu}^{\delta;\omega_1,\omega_2}f\Big)(x),\quad\quad\mathrm{Re}(\mu)>0.
\end{align*}
(The conditions on the right mean that the first identity is valid for all fractional differintegrals while the second is valid only for fractional integrals and not derivatives.)
\end{corollary}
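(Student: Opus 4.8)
The plan is to reduce both composition rules to the series formula of Theorem~\ref{Thm:seriesformula} together with the semigroup (composition) law for Riemann--Liouville differintegrals, $\prescript{RL}{c}I_x^{\mu}\prescript{RL}{c}I_x^{\nu}f=\prescript{RL}{c}I_x^{\mu+\nu}f$, which holds for $f\in L^1(c,d)$ whenever $\mathrm{Re}(\nu)>0$ and $\mu\in\mathbb{C}$ is arbitrary (with the convention $\prescript{RL}{c}I_x^{\sigma}=\prescript{RL}{c}D_x^{-\sigma}$ when $\mathrm{Re}(\sigma)<0$). By Theorem~\ref{Thm:seriesformula} we have the locally uniformly convergent expansion
\[
\Big({}_c\mathfrak{I}_{\alpha,\beta,\gamma}^{\delta;\omega_1,\omega_2}f\Big)(x)=\sum_{k=0}^{\infty}\sum_{l=0}^{\infty}\frac{(\delta)_{k+l}\omega_1^k\omega_2^l}{k!l!}\left(\prescript{RL}{c}I_x^{\alpha k+\beta l+\gamma}f\right)(x),
\]
and the same formula with $\gamma$ replaced by $\gamma+\mu$ is exactly the series for the right-hand side ${}_c\mathfrak{I}_{\alpha,\beta,\gamma+\mu}^{\delta;\omega_1,\omega_2}f$. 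So in each case it suffices to produce the two series and match them term by term.

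For the first identity, I would apply $\prescript{RL}{c}I_x^{\mu}$ to the displayed series and pass it inside the double sum; this is justified by the fact (already invoked in the proof of Theorem~\ref{Thm:RL:MLfn}) that a locally uniformly convergent series may be fractionally differintegrated term by term provided the differintegrated series is again locally uniformly convergent, which here it is, being of exactly the same Lauricella-type shape with $\gamma$ shifted to $\gamma+\mu$. Each term then becomes $\prescript{RL}{c}I_x^{\mu}\prescript{RL}{c}I_x^{\alpha k+\beta l+\gamma}f=\prescript{RL}{c}I_x^{\alpha k+\beta l+\gamma+\mu}f$: the composition law applies for every $k,l$ and arbitrary $\mu$ precisely because $\mathrm{Re}(\alpha k+\beta l+\gamma)>0$ always. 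Comparing with the series formula for parameter $\gamma+\mu$ yields the first identity.

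For the second identity, the hypothesis $\mathrm{Re}(\mu)>0$ ensures that $\prescript{RL}{c}I_x^{\mu}f\in L^1(c,d)$ (boundedness of the Riemann--Liouville integral on $L^1$, cf.\ the estimate in Theorem~\ref{thm2}), so Theorem~\ref{Thm:seriesformula} may be applied to it directly; this gives ${}_c\mathfrak{I}_{\alpha,\beta,\gamma}^{\delta;\omega_1,\omega_2}(\prescript{RL}{c}I_x^{\mu}f)$ as the series $\sum_{k,l}\frac{(\delta)_{k+l}\omega_1^k\omega_2^l}{k!l!}\prescript{RL}{c}I_x^{\alpha k+\beta l+\gamma}\prescript{RL}{c}I_x^{\mu}f$, where now each inner composition is just the ordinary semigroup law for fractional \emph{integrals} (both orders having positive real part), so each term equals $\prescript{RL}{c}I_x^{\alpha k+\beta l+\gamma+\mu}f$; comparing again with the series for parameter $\gamma+\mu$ completes the proof. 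An alternative for this direction, and for the first when $\mathrm{Re}(\mu)>0$, is to use associativity of convolution together with Theorem~\ref{Thm:RL:MLfn}: since the Riemann--Liouville integral of the kernel $t^{\gamma-1}E_{\alpha,\beta,\gamma}^{\delta}$ equals $t^{\gamma+\mu-1}E_{\alpha,\beta,\gamma+\mu}^{\delta}$, re-bracketing the triple convolution immediately gives the stated operator.

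The main obstacle is the termwise application of $\prescript{RL}{c}I_x^{\mu}$ in the first identity when $\mathrm{Re}(\mu)\geq0$: there $\prescript{RL}{c}I_x^{\mu}$ acts as a fractional derivative, and termwise fractional differentiation of a uniformly convergent series is not automatic. This is handled exactly as in Theorem~\ref{Thm:RL:MLfn}, by verifying that the differentiated double series is itself locally uniformly convergent (Srivastava--Daoust criteria again). A minor bookkeeping point is that when $\mathrm{Re}(\gamma+\mu)\leq0$ the symbol ${}_c\mathfrak{I}_{\alpha,\beta,\gamma+\mu}^{\delta;\omega_1,\omega_2}$ must be read through its series formula~\eqref{seriesformula}, in which the finitely many terms with $\mathrm{Re}(\alpha k+\beta l+\gamma+\mu)\leq0$ are genuine Riemann--Liouville derivatives; this is consistent with the term-by-term identification and causes no further difficulty.
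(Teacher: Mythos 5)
Your proposal is correct and follows essentially the same route as the paper: the paper's own (very brief) proof likewise combines the series formula of Theorem \ref{Thm:seriesformula} with the Riemann--Liouville semigroup property \eqref{RLsemigroup} and the term-by-term differintegration facts cited in Theorem \ref{Thm:RL:MLfn}. Your write-up simply spells out the convergence and bookkeeping details that the paper leaves implicit.
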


\begin{proof}
It is well known that, in the Riemann--Liouville model, a semigroup property is valid when we take a differintegral of an integral:
\begin{equation}
\label{RLsemigroup}
\prescript{RL}{c}I^{\mu}_x\prescript{RL}{c}I^{\nu}_xf(x)=\prescript{RL}{c}I^{\mu+\nu}_xf(x),\quad\quad\mu,\nu\in\mathbb{C},\mathrm{Re}(\nu)>0.
\end{equation}
Using also the facts cited in the proof of Theorem \ref{Thm:RL:MLfn} about fractional differintegration of series, we obtain the result directly from the series formula \eqref{seriesformula}.
\end{proof}

In the next theorem, we state the semigroup property of the operator \eqref{4}. This is a very important issue to consider for any fractional-calculus operator, since we usually expect an integral of an integral to be an integral, but sometimes it may also be useful to break this condition. Also, the semigroup property will enable us to define an inverse to the fractional integral operator, and hence to find the corresponding fractional derivative operators. We use three different proof techniques to prove this theorem.

\begin{theorem}[Semigroup property in $\gamma$ and $\delta$ parameters]
\label{Thm:semigroup}
Let $\alpha,\beta,\gamma_1,\gamma_2,\delta_1,\delta_2,\omega_1,\omega_2\in\mathbb{C}$ with $\mathrm{Re}(\alpha)>0$, $\mathrm{Re}(\beta)>0$, $\mathrm{Re}(\gamma_j)>0$. For any $f\in L^1(c,d)$, we have the following semigroup property for the  fractional integral operator \eqref{4}:
\begin{equation}
\left({}_c\mathfrak{I}_{\alpha,\beta,\gamma_1}^{\delta_1;\omega_1,\omega_2}{}_c\mathfrak{I}_{\alpha,\beta,\gamma_2}^{\delta_2;\omega_1,\omega_2}f\right)(x)=\left({}_c\mathfrak{I}_{\alpha,\beta,\gamma_1+\gamma_2}^{\delta_1+\delta_2;\omega_1,\omega_2}f\right)(x). \label{7}
\end{equation}
In particular, setting $\delta_2=-\delta_1$ and using relation \eqref{6}, we find that
\begin{equation}
\left({}_c\mathfrak{I}_{\alpha,\beta,\gamma_1}^{\delta;\omega_1,\omega_2}{}_c\mathfrak{I}_{\alpha,\beta,\gamma_2}^{-\delta;\omega_1,\omega_2}f\right)(x)=\prescript{RL}{c}I_x^{\gamma_1+\gamma_2}f(x).  \label{8}
\end{equation}
\end{theorem}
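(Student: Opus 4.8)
The plan is to establish the semigroup identity \eqref{7} by three independent arguments, exactly as foreshadowed, and then to read off \eqref{8} as an immediate specialisation. The three routes are: (i) the series formula of Theorem \ref{Thm:seriesformula} combined with a Vandermonde-type identity for the mixed Pochhammer symbol; (ii) the convolution structure of \eqref{4} together with Theorem \ref{Thm:convolution} and associativity of convolution; and (iii) a direct Fubini computation on the iterated integral. All interchanges of sums and integrals will be justified by the locally uniform (absolute) convergence stated in Theorem \ref{Thm:seriesformula} and the $L^1$-boundedness of Theorem \ref{thm2}.

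\textbf{Series route.} I would expand the inner operator by \eqref{seriesformula} as $\sum_{k_2,l_2}\frac{(\delta_2)_{k_2+l_2}\omega_1^{k_2}\omega_2^{l_2}}{k_2!l_2!}\,\prescript{RL}{c}I_x^{\alpha k_2+\beta l_2+\gamma_2}f$, then apply the outer operator, again via \eqref{seriesformula}, and compose the Riemann--Liouville integrals using \eqref{RLsemigroup}. This last step is legitimate because $\mathrm{Re}(\alpha k_2+\beta l_2+\gamma_2)=k_2\mathrm{Re}(\alpha)+l_2\mathrm{Re}(\beta)+\mathrm{Re}(\gamma_2)>0$. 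Re-indexing the resulting quadruple sum by $k=k_1+k_2$, $l=l_1+l_2$ collects the coefficient of $\prescript{RL}{c}I_x^{\alpha k+\beta l+\gamma_1+\gamma_2}f$, and the whole theorem reduces to verifying
\[
\sum_{k_1=0}^{k}\sum_{l_1=0}^{l}\binom{k}{k_1}\binom{l}{l_1}(\delta_1)_{k_1+l_1}(\delta_2)_{(k+l)-(k_1+l_1)}=(\delta_1+\delta_2)_{k+l}.
\]
I would prove this by grouping terms according to $j=k_1+l_1$: the binomial Vandermonde convolution $\sum_{k_1+l_1=j}\binom{k}{k_1}\binom{l}{l_1}=\binom{k+l}{j}$ reduces it to the classical identity $\sum_{j=0}^{n}\binom{n}{j}(\delta_1)_j(\delta_2)_{n-j}=(\delta_1+\delta_2)_n$ with $n=k+l$. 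Substituting this back recovers exactly the series \eqref{seriesformula} for ${}_c\mathfrak{I}_{\alpha,\beta,\gamma_1+\gamma_2}^{\delta_1+\delta_2;\omega_1,\omega_2}f$.

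\textbf{Convolution routes.} Writing $g_j(u)=u^{\gamma_j-1}E_{\alpha,\beta,\gamma_j}^{\delta_j}(\omega_1u^{\alpha},\omega_2u^{\beta})$, the operator \eqref{4} is the convolution ${}_c\mathfrak{I}_{\alpha,\beta,\gamma_j}^{\delta_j;\omega_1,\omega_2}f=g_j\ast f$ on $(c,x)$; after the translation $x\mapsto x-c$ one may take $c=0$. Then ${}_c\mathfrak{I}^{\delta_1}_{\gamma_1}({}_c\mathfrak{I}^{\delta_2}_{\gamma_2}f)=g_1\ast(g_2\ast f)=(g_1\ast g_2)\ast f$ by associativity, and Theorem \ref{Thm:convolution} gives $g_1\ast g_2=t^{\gamma_1+\gamma_2-1}E_{\alpha,\beta,\gamma_1+\gamma_2}^{\delta_1+\delta_2}(\omega_1t^{\alpha},\omega_2t^{\beta})$, which is precisely the kernel of ${}_c\mathfrak{I}_{\alpha,\beta,\gamma_1+\gamma_2}^{\delta_1+\delta_2;\omega_1,\omega_2}$. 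A third, self-contained variant would not cite Theorem \ref{Thm:convolution} but instead write out the iterated integral, apply Fubini to exchange the order of integration over $\{c<\tau<\xi<x\}$, and evaluate the inner integral $\int_\tau^x(x-\xi)^{\gamma_1-1}(\xi-\tau)^{\gamma_2-1}E_{\alpha,\beta,\gamma_1}^{\delta_1}(\dots)E_{\alpha,\beta,\gamma_2}^{\delta_2}(\dots)\,\mathrm{d}\xi$ termwise via the Beta integral, arriving once more at the same Vandermonde-type identity; alternatively, in the $c=0$ case one may simply take Laplace transforms using Theorem \ref{Thm:Laplace}, multiply the two transforms, and invert.

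\textbf{The special case, and the main obstacle.} Putting $\delta_2=-\delta_1=:-\delta$ in \eqref{7} yields $\left({}_c\mathfrak{I}_{\alpha,\beta,\gamma_1}^{\delta;\omega_1,\omega_2}{}_c\mathfrak{I}_{\alpha,\beta,\gamma_2}^{-\delta;\omega_1,\omega_2}f\right)(x)=\left({}_c\mathfrak{I}_{\alpha,\beta,\gamma_1+\gamma_2}^{0;\omega_1,\omega_2}f\right)(x)$, and relation \eqref{6} identifies the right-hand side with $\prescript{RL}{c}I_x^{\gamma_1+\gamma_2}f(x)$, giving \eqref{8}. I expect the genuine content of the proof to be the combinatorial core: recognising and proving the double Vandermonde identity for $(\delta)_{k+l}$. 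This is exactly the structural feature highlighted in the Remark after Definition \ref{Def:ourML} that fails for the Saxena et al.\ kernel (which carries $(\gamma_1)_k(\gamma_2)_l$ rather than $(\delta)_{k+l}$), so it is unsurprising that it is precisely where the semigroup property lives; by contrast, the analytic bookkeeping — termwise integration, Fubini, reduction to $c=0$, composition of Riemann--Liouville integrals — is routine given Theorems \ref{Thm:seriesformula}, \ref{thm2} and \ref{Thm:convolution}.
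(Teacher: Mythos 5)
Your proposal is correct and follows essentially the same strategy as the paper, which likewise gives three proofs of \eqref{7} (a direct convolution/Fubini argument invoking Theorem \ref{Thm:convolution}, the series formula \eqref{seriesformula} combined with the Riemann--Liouville semigroup \eqref{RLsemigroup}, and a Laplace-transform argument in the case $c=0$) and obtains \eqref{8} by exactly your specialisation $\delta_2=-\delta_1$ together with \eqref{6}. The only minor difference is the combinatorial core of the series route: you group by $j=k_1+l_1$ and use the Chu--Vandermonde convolution $\sum_{j=0}^{k+l}\binom{k+l}{j}(\delta_1)_j(\delta_2)_{k+l-j}=(\delta_1+\delta_2)_{k+l}$, whereas the paper applies twice the equivalent Beta-function identity $\sum_{m+n=k}\frac{B(\rho_1+n,\rho_2+m)\,k!}{B(\rho_1,\rho_2)\,n!\,m!}=1$ quoted from \cite{fernandez-baleanu-srivastava}.
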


\begin{proof}[Proof using convolution relation]
Here we use direct integration and the result of Theorem \ref{Thm:convolution} (which was proved using Laplace transforms):
\begin{align*}
\left({}_c\mathfrak{I}_{\alpha,\beta,\gamma_1}^{\delta_1;\omega_1,\omega_2}{}_c\mathfrak{I}_{\alpha,\beta,\gamma_2}^{\delta_2;\omega_1,\omega_2}f\right)(x)&=\int_c^x(x-t)^{\gamma_1-1}E_{\alpha ,\beta ,\gamma_1}^{\delta_1}\left(\omega_1(x-t)^{\alpha },\omega_2(x-t)^{\beta }\right){}_c\mathfrak{I}_{\alpha,\beta,\gamma_2}^{\delta_2;\omega_1,\omega_2}f(t)\mathrm{d}t \\
&\hspace{-4cm}=\int_c^x\int_c^t(x-t)^{\gamma_1-1}(t-\tau )^{\gamma_2-1}E_{\alpha,\beta ,\gamma_1}^{\delta_1}\left( \omega_1(x-t)^{\alpha },\omega_2(x-t)^{\beta }\right) E_{\alpha ,\beta ,\gamma_2}^{\delta_2 }\left( \omega_1(t-\tau )^{\alpha },\omega_2(t-\tau )^{\beta }\right)f(\tau)\mathrm{d}\tau\mathrm{d}t \\
&\hspace{-4cm}=\int_c^x\int_{\tau}^x(x-t)^{\gamma_1-1}(t-\tau )^{\gamma_2-1}E_{\alpha ,\beta ,\gamma_1}^{\delta_1}\left( \omega_1(x-t)^{\alpha},\omega_2(x-t)^{\beta }\right) E_{\alpha ,\beta ,\gamma_2}^{\delta_2}\left(\omega_1(t-\tau )^{\alpha },\omega_2(t-\tau )^{\beta }\right) f(\tau )\mathrm{d}t\mathrm{d}\tau  \\
&\hspace{-4cm}=\int_c^xf(\tau)\int_{0}^{x-\tau }(x-\tau-u)^{\gamma_1-1}u^{\gamma_2-1}E_{\alpha ,\beta ,\gamma_1}^{\delta_1}\left( \omega_1(x-\tau-u)^{\alpha},\omega_2(x-\tau-u)^{\beta }\right) E_{\alpha ,\beta ,\gamma_2}^{\delta_2}\left( \omega_1u^{\alpha },\omega_2u^{\beta }\right)\mathrm{d}u\mathrm{d}\tau  \\
&\hspace{-4cm}=\int_c^x\Big[t^{\gamma_1-1}E_{\alpha ,\beta ,\gamma_1}^{\delta_1}\left( \omega_1t^{\alpha },\omega_2t^{\beta }\right)\Big] \ast \Big[t^{\gamma_2-1}E_{\alpha,\beta ,\gamma_2}^{\delta_2}\left( \omega_1t^{\alpha },\omega_2t^{\beta }\right)\Big]_{t=x-\tau}f(\tau)\mathrm{d}\tau  \\
&\hspace{-4cm}=\int_c^x(x-\tau)^{\gamma_1+\gamma_2-1}E_{\alpha ,\beta ,\gamma_1+\gamma_2}^{\delta_1+\delta_2}\left(\omega_1(x-\tau)^{\alpha },\omega_2(x-\tau)^{\beta }\right)f(\tau )\mathrm{d}\tau  \\
&\hspace{-4cm}=\left({}_c\mathfrak{I}_{\alpha,\beta,\gamma_1+\gamma_2}^{\delta_1+\delta_2;\omega_1,\omega_2}f\right)(x),
\end{align*}
which is the desired result.
\end{proof}

\begin{proof}[Proof using series formula]
Here we use the result of Theorem \ref{Thm:seriesformula}, and also the fact \eqref{RLsemigroup} about the semigroup property of Riemann--Liouville integrals.
\begin{align*}
\left({}_c\mathfrak{I}_{\alpha,\beta,\gamma_1}^{\delta_1;\omega_1,\omega_2}{}_c\mathfrak{I}_{\alpha,\beta,\gamma_2}^{\delta_2;\omega_1,\omega_2}f\right)(x)&=\sum_{k=0}^{\infty }\sum_{l=0}^{\infty }\frac{(\delta_1)_{k+l}\omega_1^k\omega_2^l}{k!l!}\left(\prescript{RL}{c}I_x^{\alpha k+\beta l+\gamma_1}{}_c\mathfrak{I}_{\alpha,\beta,\gamma_2}^{\delta_2;\omega_1,\omega_2}f\right)(x) \\
&\hspace{-4cm}=\sum_{k=0}^{\infty }\sum_{l=0}^{\infty }\frac{(\delta_1)_{k+l}\omega_1^k\omega_2^l}{k!l!}\prescript{RL}{c}I_x^{\alpha k+\beta l+\gamma}\sum_{i=0}^{\infty }\sum_{j=0}^{\infty }\frac{(\delta_2)_{i+j}\omega_1^i\omega_2^j}{i!j!}\prescript{RL}{c}I_x^{\alpha i+\beta j+\gamma_2}f(x) \\
&\hspace{-4cm}=\sum_{k=0}^{\infty }\sum_{l=0}^{\infty }\sum_{i=0}^{\infty}\sum_{j=0}^{\infty }\frac{(\delta_1)_{k+l}(\delta_2)_{i+j}\omega_1^{k+i}\omega_2^{l+j}}{k!l!i!j!}\prescript{RL}{c}I_x^{\alpha k+\beta l+\gamma_1}\prescript{RL}{c}I_x^{\alpha i+\beta j+\gamma_2}f(x) \\
&\hspace{-4cm}=\sum_{k=0}^{\infty }\sum_{l=0}^{\infty }\sum_{i=0}^{\infty}\sum_{j=0}^{\infty }\frac{\Gamma(\delta_1+k+l)\Gamma(\delta_2+i+j)}{\Gamma(\delta_1)\Gamma(\delta_2)k!l!i!j!}\omega_1^{k+i}\omega_2^{l+j}\prescript{RL}{c}I_x^{\alpha(k+i)+\beta(l+j)+\gamma_1+\gamma_2}f(x) \\
&\hspace{-4cm}=\sum_{k=0}^{\infty }\sum_{l=0}^{\infty }\sum_{i=0}^{\infty}\sum_{j=0}^{\infty }\frac{B(\delta_1+k+l,\delta_2+i+j)\Gamma(\delta_1+\delta_2+k+l+i+j)}{B(\delta_1,\delta_2)\Gamma(\delta_1+\delta_2)k!l!i!j!}\omega_1^{k+i}\omega_2^{l+j}\prescript{RL}{c}I_x^{\alpha(k+i)+\beta(l+j)+\gamma_1+\gamma_2}f(x) \\
&\hspace{-4cm}=\sum_{p=0}^{\infty }\sum_{q=0}^{\infty }\left[ \sum_{i+k=p}\sum_{j+l=q}\frac{B(\delta_1+k+l,\delta_2+i+j)p!q!}{B(\delta_1,\delta_2)k!l!i!j!}\right]\frac{\Gamma(\delta_1+\delta_2+p+q)}{\Gamma (\delta_1+\delta_2)p!q!}\omega_1^p\omega_2^q\prescript{RL}{c}I_x^{p\alpha +q\beta +\gamma_1+\gamma_2}f(x).
\end{align*}
It remains to simplify the expression in square brackets. We use
the fact that $\sum_{m+n=k}\frac{B(\rho_1+n,\rho_2+m)k!}{B(\rho_1,\rho_2)n!m!}=1$, proved in \cite[Theorem 2.9]{fernandez-baleanu-srivastava}, and apply this identity twice in the following analysis:
\begin{align*}
&{\color{white}=}\sum_{i+k=p}\sum_{j+l=q}\frac{B(\delta_1+k+l,\delta_2+i+j)p!q!}{B(\delta_1,\delta_2)k!l!i!j!} \\
&=\sum_{i+k=p}\frac{p!}{k!i!}\left[ \frac{B(\delta_1+k,\delta_2+i)}{B(\delta_1,\delta_2)}\right]\sum_{j+l=q}\frac{B((\delta_1+k)+l,(\delta_2+i)+j)q!}{B(\delta_1+k,\delta_2+i)l!j!}  \\
&=\sum_{i+k=p}\frac{B(\delta_1+k,\delta_2+i)p!}{k!i!B(\delta_1,\delta_2)}(1)  \\
&=1.
\end{align*}
Therefore, we have 
\begin{align*}
\left({}_c\mathfrak{I}_{\alpha,\beta,\gamma_1}^{\delta_1;\omega_1,\omega_2}{}_c\mathfrak{I}_{\alpha,\beta,\gamma_2}^{\delta_2;\omega_1,\omega_2}f\right)(x)&=\sum_{p=0}^{\infty}\sum_{q=0}^{\infty}\frac{\Gamma(\delta_1+\delta_2+p+q)}{\Gamma (\delta_1+\delta_2)p!q!}\omega_1^p\omega_2^q\prescript{RL}{c}I_x^{p\alpha +q\beta +\gamma_1+\gamma_2}f(x) \\
&=\sum_{p=0}^{\infty }\sum_{q=0}^{\infty }\frac{(\delta_1+\delta_2)_{p+q}}{p!q!}\prescript{RL}{c}I_x^{p\alpha +q\beta +\gamma_1+\gamma_2}f(x) \\
&=\left({}_c\mathfrak{I}_{\alpha,\beta,\gamma_1+\gamma_2}^{\delta_1+\delta_2;\omega_1,\omega_2}f\right) (x),
\end{align*}
which is the end of the proof.
\end{proof}

\begin{proof}[Proof using Laplace transforms, in the case $c=0$]
Since the integral operator \eqref{4} is defined by a convolution of the input function $f(x)$ with the function $x^{\gamma-1}E_{\alpha,\beta,\gamma}^{\delta}(\omega_1x^{\alpha},\omega_2x^{\beta})$ whose Laplace transform is calculated in Theorem \ref{Thm:Laplace}, we can use the convolution theorem for Laplace transforms to deduce that
\begin{equation}
\label{Laplace:intop}
\mathcal{L}\left[{}_0\mathfrak{I}_{\alpha,\beta,\gamma}^{\delta;\omega_1,\omega_2}f\right](s)=s^{-\gamma}\Big(1-\omega_1s^{-\alpha}-\omega_2s^{-\beta}\Big) ^{-\delta }\mathcal{L}\big[f\big],\quad\quad\mathrm{Re}(s)>0,
\end{equation}
for any parameters $\alpha,\beta,\gamma,\delta,\omega_1,\omega_2\in\mathbb{C}$ satisfying the usual conditions $\mathrm{Re}(\alpha),\mathrm{Re}(\beta),\mathrm{Re}(\gamma)>0$.

We use this fact three times in the following analysis:
\begin{align*}
\mathcal{L}\left\{ {}_0\mathfrak{I}_{\alpha,\beta,\gamma_1}^{\delta_1;\omega_1,\omega_2}{}_0\mathfrak{I}_{\alpha,\beta,\gamma_2}^{\delta_2;\omega_1,\omega_2}f \right\} (s)&=s^{-\gamma_1}\Big(1-\omega_1s^{-\alpha}-\omega_2s^{-\beta}\Big) ^{-\delta_1}\mathcal{L}\left\{ {}_0\mathfrak{I}_{\alpha,\beta,\gamma_2}^{\delta_2;\omega_1,\omega_2}f \right\} (s) \\
&=s^{-\gamma_1}\Big(1-\omega_1s^{-\alpha}-\omega_2s^{-\beta}\Big) ^{-\delta_1}s^{-\gamma_2}\Big(1-\omega_1s^{-\alpha}-\omega_2s^{-\beta}\Big) ^{-\delta_2}\mathcal{L}\left\{f\right\} (s) \\
&=s^{-\gamma_1-\gamma_2}\Big(1-\omega_1s^{-\alpha}-\omega_2s^{-\beta}\Big) ^{-\delta_1-\delta_2}\mathcal{L}\left\{f\right\} (s) \\
&=\mathcal{L}\left\{ {}_c\mathfrak{I}_{\alpha,\beta,\gamma_1+\gamma_2}^{\delta_1+\delta_2;\omega_1,\omega_2}f \right\} (s).
\end{align*}
Taking the inverse Laplace transform on both sides, we get
\begin{equation*}
\left({}_0\mathfrak{I}_{\alpha,\beta,\gamma_1}^{\delta_1;\omega_1,\omega_2}{}_0\mathfrak{I}_{\alpha,\beta,\gamma_2}^{\delta_2;\omega_1,\omega_2}f \right) (x)=\left( {}_0\mathfrak{I}_{\alpha,\beta,\gamma_1+\gamma_2}^{\delta_1+\delta_2;\omega_1,\omega_2}f \right) (x).
\end{equation*}
\end{proof}

\subsection{The fractional derivative operators}

In the following Theorem, we find the left inverse operator for the integral operator \eqref{4}. This will enable us to define operators of fractional differentiation to accompany that of fractional integration defined by \eqref{4}.

\begin{theorem}[Left inverse for the fractional integral operator] \label{thm1}
Let $\alpha ,\beta ,\gamma ,\delta ,\omega_1,\omega_2\in \mathbb{C}$ and $\mathrm{Re}(\alpha)>0$, $\mathrm{Re}(\beta)>0$, $\mathrm{Re}(\gamma)>0$. Then, for any $\zeta\in\mathbb{C}$ with $\mathrm{Re}(\zeta)>0$, the following defines a left inverse operator to the operator ${}_c\mathfrak{I}_{\alpha,\beta,\gamma}^{\delta;\omega_1,\omega_2}$ on the space $L^{1}(c,d)$:
\begin{align}
\label{leftinverse:defn} \left( {}_c\mathfrak{D}_{\alpha,\beta,\gamma}^{\delta;\omega_1,\omega_2}\varphi\right) (x)&=\left(\prescript{RL}{c}D_x^{\gamma +\zeta }{}_c\mathfrak{I}_{\alpha,\beta,\zeta}^{-\delta;\omega_1,\omega_2}\varphi\right) (x) \\
\label{leftinverse:series} &=\sum_{k=0}^{\infty }\sum_{l=0}^{\infty }\frac{(-\delta )_{k+l}\omega_1^k\omega_2^l}{k!l!}\left(\prescript{RL}{c}I_x^{\alpha k+\beta l-\gamma }\varphi\right) (x),
\end{align}
defined for any $\varphi\in L^1(c,d)$ such that $\prescript{RL}{c}D^{\gamma}_x\varphi(x)$ exists.
\end{theorem}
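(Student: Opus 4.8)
The plan is to prove two things: that the two expressions \eqref{leftinverse:defn} and \eqref{leftinverse:series} for ${}_c\mathfrak{D}_{\alpha,\beta,\gamma}^{\delta;\omega_1,\omega_2}$ really do coincide -- which in particular shows that the construction does not depend on the auxiliary parameter $\zeta$ -- and that the operator so defined is a genuine left inverse for ${}_c\mathfrak{I}_{\alpha,\beta,\gamma}^{\delta;\omega_1,\omega_2}$.

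For the equivalence of \eqref{leftinverse:defn} and \eqref{leftinverse:series}, I would start from Theorem \ref{Thm:seriesformula} applied with $\gamma$ replaced by $\zeta$ and $\delta$ replaced by $-\delta$, writing ${}_c\mathfrak{I}_{\alpha,\beta,\zeta}^{-\delta;\omega_1,\omega_2}\varphi=\sum_{k,l}\frac{(-\delta)_{k+l}\omega_1^k\omega_2^l}{k!l!}\prescript{RL}{c}I_x^{\alpha k+\beta l+\zeta}\varphi$, and then apply $\prescript{RL}{c}D_x^{\gamma+\zeta}$ term by term (justified by local uniform convergence of the series and of the series of differintegrals, just as in the proof of Theorem \ref{Thm:RL:MLfn}). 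The crux is the composition identity $\prescript{RL}{c}D_x^{\gamma+\zeta}\prescript{RL}{c}I_x^{\alpha k+\beta l+\zeta}\varphi=\prescript{RL}{c}I_x^{\alpha k+\beta l-\gamma}\varphi$, where the right-hand side is a genuine fractional integral whenever $\mathrm{Re}(\alpha k+\beta l)\geq\mathrm{Re}(\gamma)$ and a fractional derivative of order $\gamma-\alpha k-\beta l$ for the finitely many remaining pairs $(k,l)$. In the first case the identity follows from the semigroup law \eqref{RLsemigroup} together with $\prescript{RL}{c}D_x^{\mu}\prescript{RL}{c}I_x^{\mu}=\mathrm{id}$; in the second case I would unfold $\prescript{RL}{c}D_x^{\gamma+\zeta}\prescript{RL}{c}I_x^{\alpha k+\beta l+\zeta}$ via the definition \eqref{RLorig:defderiv} into $\frac{\mathrm{d}^n}{\mathrm{d}x^n}\prescript{RL}{c}I_x^{n-\gamma+\alpha k+\beta l}$ and cancel the excess integer-order integrations against the derivatives to land on $\prescript{RL}{c}D_x^{\gamma-\alpha k-\beta l}\varphi$, which exists precisely because $\prescript{RL}{c}D_x^{\gamma}\varphi$ does. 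Since no $\zeta$ appears in \eqref{leftinverse:series}, this equivalence also yields the $\zeta$-independence.

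For the left-inverse property I would work from the closed form \eqref{leftinverse:defn}. Fix $f\in L^1(c,d)$ and set $g:={}_c\mathfrak{I}_{\alpha,\beta,\gamma}^{\delta;\omega_1,\omega_2}f$; by Theorem \ref{thm2} this lies in $L^1(c,d)$, and by Theorem \ref{Thm:seriesformula} with term-by-term differintegration one gets $\prescript{RL}{c}D_x^{\gamma}g=f+\sum_{(k,l)\neq(0,0)}\frac{(\delta)_{k+l}\omega_1^k\omega_2^l}{k!l!}\prescript{RL}{c}I_x^{\alpha k+\beta l}f$, a series converging in $L^1$ by an estimate like the one in Theorem \ref{thm2}, so $g$ is in the domain of ${}_c\mathfrak{D}$. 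Now apply the semigroup property of Theorem \ref{Thm:semigroup} with $(\gamma_1,\delta_1)=(\zeta,-\delta)$ and $(\gamma_2,\delta_2)=(\gamma,\delta)$ to obtain ${}_c\mathfrak{I}_{\alpha,\beta,\zeta}^{-\delta;\omega_1,\omega_2}g={}_c\mathfrak{I}_{\alpha,\beta,\gamma+\zeta}^{0;\omega_1,\omega_2}f=\prescript{RL}{c}I_x^{\gamma+\zeta}f$, the last step being \eqref{6}. Finally, since $\mathrm{Re}(\gamma+\zeta)>0$ and $f\in L^1(c,d)$, the classical left-inverse property of the Riemann--Liouville operators gives $\prescript{RL}{c}D_x^{\gamma+\zeta}\prescript{RL}{c}I_x^{\gamma+\zeta}f=f$, hence $\big({}_c\mathfrak{D}_{\alpha,\beta,\gamma}^{\delta;\omega_1,\omega_2}{}_c\mathfrak{I}_{\alpha,\beta,\gamma}^{\delta;\omega_1,\omega_2}f\big)(x)=f(x)$.

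The main obstacle is bookkeeping rather than anything conceptual: one must isolate the finitely many pairs $(k,l)$ for which $\alpha k+\beta l-\gamma$ has negative real part, so that $\prescript{RL}{c}I_x^{\alpha k+\beta l-\gamma}$ is actually a derivative, and check that the hypothesis "$\prescript{RL}{c}D_x^{\gamma}\varphi$ exists" is exactly what makes those terms -- and thus the whole series \eqref{leftinverse:series} -- well defined; once that is cleared up, Theorems \ref{Thm:seriesformula}, \ref{thm2} and \ref{Thm:semigroup} together with \eqref{6} do all the real work.
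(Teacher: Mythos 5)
Your proposal is correct and follows essentially the same route as the paper: both rest on the semigroup special case \eqref{8} to reduce ${}_c\mathfrak{I}_{\alpha,\beta,\zeta}^{-\delta;\omega_1,\omega_2}\,{}_c\mathfrak{I}_{\alpha,\beta,\gamma}^{\delta;\omega_1,\omega_2}f$ to $\prescript{RL}{c}I_x^{\gamma+\zeta}f$, then invoke the classical Riemann--Liouville left-inverse property, and obtain the series form \eqref{leftinverse:series} by applying $\prescript{RL}{c}D_x^{\gamma+\zeta}$ term by term to the series of Theorem \ref{Thm:seriesformula} via \eqref{RLsemigroup}. Your extra bookkeeping (isolating the finitely many genuinely-derivative terms and verifying $\prescript{RL}{c}D_x^{\gamma}g$ exists) only makes explicit what the paper notes briefly at the end of its proof.
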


\begin{proof}
Let us take a function $f\in L^{1}(c,d)$ and denote
\begin{equation*}
\varphi (x):=\left({}_c\mathfrak{I}_{\alpha,\beta,\gamma}^{\delta;\omega_1,\omega_2}f\right)(x).
\end{equation*}
Since the operator ${}_c\mathfrak{I}_{\alpha,\beta,\gamma}^{\delta;\omega_1,\omega_2}$ is bounded on the space $L^{1}(c,d)$ from Theorem \ref{thm2}, we conclude that $\varphi \in L^{1}(c,d)$. Our aim here is to recover $f$ in terms of $\varphi$.

First of all, we apply the operator ${}_c\mathfrak{I}_{\alpha,\beta,\zeta}^{-\delta;\omega_1,\omega_2}$ to $\varphi$, for an arbitrary $\zeta\in\mathbb{C}$ with $\mathrm{Re}(\zeta)>0$, and use the Theorem \ref{Thm:semigroup}, specifically the special case \eqref{8}:
\begin{equation*}
{}_c\mathfrak{I}_{\alpha,\beta,\zeta}^{-\delta;\omega_1,\omega_2}\varphi (x)=\left( {}_c\mathfrak{I}_{\alpha,\beta,\zeta}^{-\delta;\omega_1,\omega_2}{}_c\mathfrak{I}_{\alpha,\beta,\gamma}^{\delta;\omega_1,\omega_2}f\right) (x)=\left(\prescript{RL}{c}I_x^{\gamma+\zeta }f\right) (x).
\end{equation*}
And the left inverse of a Riemann--Liouville fractional integral is always the corresponding Riemann--Liouville fractional derivative \cite[Theorem 2.4]{samko-kilbas-marichev}. So
\begin{equation*}
\prescript{RL}{c}D_x^{\gamma +\zeta }{}_c\mathfrak{I}_{\alpha,\beta,\zeta}^{-\delta;\omega_1,\omega_2}\varphi (x)=\prescript{RL}{c}D_x^{\gamma+\zeta }\prescript{RL}{c}I_x^{\gamma +\zeta }f(x)=f(x),
\end{equation*}
which means ${}_c\mathfrak{D}_{\alpha,\beta,\gamma}^{\delta;\omega_1,\omega_2}\varphi=f$, where the $\mathfrak{D}$ operator is defined by \eqref{leftinverse:defn}. So we have shown that this operator is the left inverse of ${}_c\mathfrak{I}_{\alpha,\beta,\gamma}^{\delta;\omega_1,\omega_2}$ on the space $L^{1}(c,d)$. It remains to prove the series expression \eqref{leftinverse:series} for the same operator, which follows from Theorem \ref{Thm:seriesformula}:
\begin{align*}
\left( {}_c\mathfrak{D}_{\alpha,\beta,\gamma}^{\delta;\omega_1,\omega_2}\varphi\right) (x)&=\left(\prescript{RL}{c}D_x^{\gamma +\zeta }{}_c\mathfrak{I}_{\alpha,\beta,\zeta}^{-\delta;\omega_1,\omega_2}\varphi\right) (x) \\
&=\prescript{RL}{c}D_x^{\gamma +\zeta }\left(\sum_{k=0}^{\infty }\sum_{l=0}^{\infty }\frac{(-\delta )_{k+l}\omega_1^k\omega_2^l}{k!l!}\left(\prescript{RL}{c}I_x^{\alpha k+\beta l+\zeta}\varphi\right)(x)\right) \\
&=\sum_{k=0}^{\infty }\sum_{l=0}^{\infty }\frac{(-\delta )_{k+l}\omega_1^k\omega_2^l}{k!l!}\Big(\prescript{RL}{c}D_x^{\gamma +\zeta }\prescript{RL}{c}I_x^{\alpha k+\beta l+\zeta}\varphi\Big)(x) \\
&=\sum_{k=0}^{\infty }\sum_{l=0}^{\infty }\frac{(-\delta )_{k+l}\omega_1^k\omega_2^l}{k!l!}\left(\prescript{RL}{c}I_x^{\alpha k+\beta l-\gamma}\varphi\right)(x),
\end{align*}
provided that this series converges, where in the last step we used the semigroup property \eqref{RLsemigroup} for differintegrals of integrals in the Riemann--Liouville model.

For existence of all terms in the series, we already have $\varphi\in L^1(c,d)$ so that all its fractional integrals are defined, so we just need the fractional derivatives up to order $\gamma$ to exist also. We also need to check convergence of the series, since (as discussed in the proof of Theorem \ref{Thm:RL:MLfn}) this is a required condition to be able to apply fractional derivatives term by term. All but finitely many terms in the series are fractional integrals not derivatives ($\mathrm{Re}(\alpha k+\beta l-\gamma>0$ for all except small $k,l$), so by shifting the value of $\gamma$, convergence of the series is equivalent to convergence of the series \eqref{seriesformula} which we already know is locally uniformly convergent.
\end{proof}

It is important to notice that the series formula \eqref{leftinverse:series} does not contain the parameter $\zeta$; thus, the definition of ${}_c\mathfrak{D}_{\alpha,\beta,\gamma}^{\delta;\omega_1,\omega_2}$ is independent of the choice of $\zeta$. This is important because it validates the notation: we chose an arbitrary number $\zeta$ to define the operator, but in the end the choice of this number does not matter.

The most natural choice of $\zeta$ is to make $\gamma+\zeta$ (the order of differentiation) a natural number, so that \eqref{leftinverse:defn} can be defined easily without using fractional derivatives. Therefore, we propose the following definition.

\begin{definition}
\label{Def:fracderivRL}
The fractional derivative operator of Riemann--Liouville type corresponding to the fractional integral operator defined in Definition \ref{Def:fracint} is given by the left inverse operator defined in the previous theorem, namely:
\begin{equation}
\label{fracderivRL}
\left( {}_c\mathfrak{D}_{\alpha,\beta,\gamma}^{\delta;\omega_1,\omega_2}f\right) (x)=\frac{\mathrm{d}^n}{\mathrm{d}x^n}\left({}_c\mathfrak{I}_{\alpha,\beta,n-\gamma}^{-\delta;\omega_1,\omega_2}f(x)\right),\quad\quad n:=\lfloor\mathrm{Re}(\gamma)\rfloor+1,
\end{equation}
where the parameters $\alpha,\beta,\gamma,\delta,\omega_1,\omega_2\in\mathbb{C}$ satisfy $\mathrm{Re}(\alpha),\mathrm{Re}(\beta)>0$ and $\mathrm{Re}(\gamma)\geq0$, and where $f$ is fractionally differentiable to order at least $\gamma$. (The natural number $n$ is defined to make sure that $\mathrm{Re}(n-\gamma)>0$ so that the integral operator in \eqref{fracderivRL} is well-defined.)
\end{definition}

The relationship between Definition \ref{Def:fracint} and Definition \ref{Def:fracderivRL} is analogous to that between the original Riemann--Liouville integral \eqref{RLorig:defint} and its extension the Riemann--Liouville derivative \eqref{RLorig:defderiv}. This extension can be justified in two ways: firstly, the fractional derivative is the left inverse of the fractional integral (the derivative of the corresponding integral is always the original function, in the Riemann--Liouville model); secondly, the unique analytic continuation in $\nu$ of the fractional integral $\prescript{RL}{c}I^{\alpha}_xf(x)$ from the domain $\mathrm{Re}(\alpha)>0$ is given by $\prescript{RL}{c}I^{\alpha}_xf(x)=\prescript{RL}{c}D^{-\alpha}_xf(x)$ for $\mathrm{Re}(\alpha)\leq0$.

In our case, Theorem \ref{thm1} shows that ${}_c\mathfrak{D}_{\alpha,\beta,\gamma}^{\delta;\omega_1,\omega_2}$ is the left inverse of ${}_c\mathfrak{I}_{\alpha,\beta,\gamma}^{\delta;\omega_1,\omega_2}$. As for analytic continuation, the series formula \eqref{leftinverse:series} shows that, if we define ${}_c\mathfrak{I}_{\alpha,\beta,\gamma}^{\delta;\omega_1,\omega_2}$ for all $\gamma\in\mathbb{C}$ using analytic continuation from the domain $\mathrm{Re}(\gamma)>0$, then we have ${}_c\mathfrak{D}_{\alpha,\beta,\gamma}^{\delta;\omega_1,\omega_2}={}_c\mathfrak{I}_{\alpha,\beta,-\gamma}^{-\delta;\omega_1,\omega_2}$ for $\mathrm{Re}(\gamma)\leq0$. This reflects the fact that the semigroup property of Theorem \ref{Thm:semigroup} is in both $\gamma$ and $\delta$, so both of them must be negated for the inverse.

Similarly to Definition \ref{Def:fracderivRL} for the fractional derivative operator of Riemann--Liouville type, we may also define one of Caputo type.

\begin{definition}
\label{Def:fracderivC}
The fractional derivative operator of Caputo type corresponding to the fractional integral operator defined in Definition \ref{Def:fracint} is given by:
\begin{equation}
\label{fracderivC}
\left( {}_c\mathfrak{C}_{\alpha,\beta,\gamma}^{\delta;\omega_1,\omega_2}f\right) (x)={}_c\mathfrak{I}_{\alpha,\beta,n-\gamma}^{-\delta;\omega_1,\omega_2}\left(\frac{\mathrm{d}^n}{\mathrm{d}x^n}f(x)\right),\quad\quad n:=\lfloor\mathrm{Re}(\gamma)\rfloor+1,
\end{equation}
where the parameters $\alpha,\beta,\gamma,\delta,\omega_1,\omega_2\in\mathbb{C}$ satisfy $\mathrm{Re}(\alpha),\mathrm{Re}(\beta)>0$ and $\mathrm{Re}(\gamma)\geq0$, and where $f\in C^n(c,d)$ such that $f^{(n)}\in L^1(c,d)$.
\end{definition}

\begin{theorem}
\label{Thm:inversion}
The fractional-calculus operators defined in this paper have the following composition relations:
\begin{align}
\label{inversion:RLI} {}_c\mathfrak{D}_{\alpha,\beta,\gamma}^{\delta;\omega_1,\omega_2}{}_c\mathfrak{I}_{\alpha,\beta,\gamma}^{\delta;\omega_1,\omega_2}f(x)&=f(x); \\
\label{inversion:IC} {}_c\mathfrak{I}_{\alpha,\beta,\gamma}^{\delta;\omega_1,\omega_2}{}_c\mathfrak{C}_{\alpha,\beta,\gamma}^{\delta;\omega_1,\omega_2}f(x)&=\left(\int_c^x\right)^n\left(\frac{\mathrm{d}}{\mathrm{d}x}\right)^nf(x)=f(x)-\sum_{k=0}^{n-1}\frac{(x-c)^kf^{(k)}(c)}{k!},
\end{align}
where $\alpha,\beta,\gamma,\delta,\omega_1,\omega_2\in\mathbb{C}$ satisfy $\mathrm{Re}(\alpha),\mathrm{Re}(\beta),\mathrm{Re}(\gamma)>0$, and in the second identity $n:=\lfloor\mathrm{Re}(\gamma)\rfloor+1$ as in Definitions \ref{Def:fracderivRL} and \ref{Def:fracderivC}.
\end{theorem}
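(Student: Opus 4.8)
The plan is to obtain both identities by assembling results already in hand, chiefly Theorem \ref{thm1} and the semigroup property of Theorem \ref{Thm:semigroup}; no genuinely new estimate is required, since the substantive work was done in establishing the semigroup property.

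For \eqref{inversion:RLI}, I would first note that Definition \ref{Def:fracderivRL} is precisely the special case $\zeta = n-\gamma$ of the left-inverse operator \eqref{leftinverse:defn} built in Theorem \ref{thm1}. Indeed, with $n := \lfloor\mathrm{Re}(\gamma)\rfloor+1$ we have $\mathrm{Re}(n-\gamma)>0$, so this choice of $\zeta$ is admissible, and then $\gamma+\zeta = n$ is a positive integer, so $\prescript{RL}{c}D_x^{\gamma+\zeta}$ reduces to the ordinary $n$-th derivative $\frac{\mathrm{d}^n}{\mathrm{d}x^n}$. Since the series formula \eqref{leftinverse:series} shows the resulting operator is independent of the choice of $\zeta$, the operator of Definition \ref{Def:fracderivRL} coincides with the operator ${}_c\mathfrak{D}_{\alpha,\beta,\gamma}^{\delta;\omega_1,\omega_2}$ of Theorem \ref{thm1}, which that theorem already identifies as a left inverse of ${}_c\mathfrak{I}_{\alpha,\beta,\gamma}^{\delta;\omega_1,\omega_2}$ on $L^1(c,d)$. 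Hence \eqref{inversion:RLI} is immediate.

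For \eqref{inversion:IC}, I would start from Definition \ref{Def:fracderivC}, writing ${}_c\mathfrak{C}_{\alpha,\beta,\gamma}^{\delta;\omega_1,\omega_2}f = {}_c\mathfrak{I}_{\alpha,\beta,n-\gamma}^{-\delta;\omega_1,\omega_2}\bigl(f^{(n)}\bigr)$, where $f^{(n)}\in L^1(c,d)$ by hypothesis. Applying ${}_c\mathfrak{I}_{\alpha,\beta,\gamma}^{\delta;\omega_1,\omega_2}$ and invoking the special case \eqref{8} of Theorem \ref{Thm:semigroup} with $\gamma_1=\gamma$, $\gamma_2=n-\gamma$ (both of positive real part) and $\delta_1=\delta=-\delta_2$ collapses the composition to $\prescript{RL}{c}I_x^{n}\bigl(f^{(n)}\bigr)$. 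Since $n\in\mathbb{N}$, this Riemann--Liouville integral is the $n$-fold iterated integral $\bigl(\int_c^x\bigr)^n$, and Taylor's formula with integral remainder (equivalently $n$ applications of the fundamental theorem of calculus) gives $\prescript{RL}{c}I_x^{n} f^{(n)}(x) = f(x)-\sum_{k=0}^{n-1}\frac{(x-c)^k f^{(k)}(c)}{k!}$, which is the asserted expression.

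The only points demanding any care are bookkeeping ones: verifying that the hypotheses of Theorem \ref{Thm:semigroup} are met in \eqref{inversion:IC} — in particular that the function fed into the composition is the $L^1$ function $f^{(n)}$ and not $f$ itself, which is exactly why Definition \ref{Def:fracderivC} imposes $f\in C^n(c,d)$ with $f^{(n)}\in L^1(c,d)$ — and confirming that the Definition \ref{Def:fracderivRL} operator really is the $\zeta$-independent left inverse of Theorem \ref{thm1}. These are the main (minor) obstacles; everything else is routine.
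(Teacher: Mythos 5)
Your proposal is correct and follows essentially the same route as the paper: \eqref{inversion:RLI} is read off from Theorem \ref{thm1} (with your slightly more explicit check that Definition \ref{Def:fracderivRL} is the $\zeta=n-\gamma$ instance of the $\zeta$-independent left inverse), and \eqref{inversion:IC} is obtained by combining Definition \ref{Def:fracderivC} with the special case \eqref{8} of the semigroup property to reduce the composition to $\prescript{RL}{c}I_x^{n}f^{(n)}$, then applying the iterated fundamental theorem of calculus. No substantive difference from the paper's argument.
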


\begin{proof}
The first identity is already proved by Theorem \ref{thm1}: the $\mathfrak{D}$ operator is the left inverse of the $\mathfrak{I}$ operator.

For the second identity, we use the definition \eqref{fracderivC} of the $\mathfrak{C}$ operator together with the composition property \eqref{8} for the $\mathfrak{I}$ operator:
\begin{align*}
{}_c\mathfrak{I}_{\alpha,\beta,\gamma}^{\delta;\omega_1,\omega_2}{}_c\mathfrak{C}_{\alpha,\beta,\gamma}^{\delta;\omega_1,\omega_2}f(x)&={}_c\mathfrak{I}_{\alpha,\beta,\gamma}^{\delta;\omega_1,\omega_2}{}_c\mathfrak{I}_{\alpha,\beta,n-\gamma}^{-\delta;\omega_1,\omega_2}\left(\frac{\mathrm{d}^n}{\mathrm{d}x^n}f(x)\right) \\
&={}_c\mathfrak{I}_{\alpha,\beta,\gamma+(n-\gamma)}^{\delta+(-\delta);\omega_1,\omega_2}\left(\frac{\mathrm{d}^n}{\mathrm{d}x^n}f(x)\right)={}_c\mathfrak{I}_{\alpha,\beta,n}^{0;\omega_1,\omega_2}\left(\frac{\mathrm{d}^n}{\mathrm{d}x^n}f(x)\right) \\
&=\prescript{RL}{c}I^n_x\left(\frac{\mathrm{d}^n}{\mathrm{d}x^n}f(x)\right),
\end{align*}
which is the desired result written in different notation.
\end{proof}

\begin{theorem}
\label{Thm:RLCrelation}
The fractional derivative operators of Riemann--Liouville and Caputo type have the following relationship:
\begin{equation}
\label{relation:RLC}
\left( {}_c\mathfrak{C}_{\alpha,\beta,\gamma}^{\delta;\omega_1,\omega_2}f\right) (x)=\left( {}_c\mathfrak{D}_{\alpha,\beta,\gamma}^{\delta;\omega_1,\omega_2}f\right)(x)-\sum_{j=0}^{n-1}(x-c)^{-\gamma+j}E_{\alpha,\beta,-\gamma+j+1}^{-\delta}\Big(\omega_1(x-c)^{\alpha},\omega_2(x-c)^{\beta}\Big)f^{(j)}(c),
\end{equation}
where $\alpha,\beta,\gamma,\delta,\omega_1,\omega_2\in\mathbb{C}$ satisfy $\mathrm{Re}(\alpha),\mathrm{Re}(\beta),\mathrm{Re}(\gamma)>0$, and $n:=\lfloor\mathrm{Re}(\gamma)\rfloor+1$ as in Definitions \ref{Def:fracderivRL} and \ref{Def:fracderivC}.
\end{theorem}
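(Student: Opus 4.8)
The plan is to deduce \eqref{relation:RLC} by applying the Riemann--Liouville-type operator ${}_c\mathfrak{D}_{\alpha,\beta,\gamma}^{\delta;\omega_1,\omega_2}$ to both sides of the composition identity \eqref{inversion:IC} from Theorem \ref{Thm:inversion}. First I would note that, since $f\in C^n(c,d)$ with $f^{(n)}\in L^1(c,d)$, the Caputo-type derivative ${}_c\mathfrak{C}_{\alpha,\beta,\gamma}^{\delta;\omega_1,\omega_2}f={}_c\mathfrak{I}_{\alpha,\beta,n-\gamma}^{-\delta;\omega_1,\omega_2}\big(f^{(n)}\big)$ lies in $L^1(c,d)$ by the boundedness of ${}_c\mathfrak{I}$ established in Theorem \ref{thm2}; hence the left-inverse relation \eqref{inversion:RLI} applies to it, so that applying ${}_c\mathfrak{D}_{\alpha,\beta,\gamma}^{\delta;\omega_1,\omega_2}$ to the left-hand side of \eqref{inversion:IC} collapses the composition ${}_c\mathfrak{D}{}_c\mathfrak{I}$ and simply returns ${}_c\mathfrak{C}_{\alpha,\beta,\gamma}^{\delta;\omega_1,\omega_2}f(x)$.

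Next I would apply ${}_c\mathfrak{D}_{\alpha,\beta,\gamma}^{\delta;\omega_1,\omega_2}$ to the right-hand side of \eqref{inversion:IC}, that is, to $f(x)-\sum_{k=0}^{n-1}\frac{(x-c)^k f^{(k)}(c)}{k!}$. The operator is linear (immediate from either defining formula \eqref{leftinverse:defn} or \eqref{leftinverse:series}), so this equals ${}_c\mathfrak{D}_{\alpha,\beta,\gamma}^{\delta;\omega_1,\omega_2}f(x)-\sum_{k=0}^{n-1}\frac{f^{(k)}(c)}{k!}\,{}_c\mathfrak{D}_{\alpha,\beta,\gamma}^{\delta;\omega_1,\omega_2}\big[(x-c)^k\big]$, and the whole proof reduces to evaluating ${}_c\mathfrak{D}_{\alpha,\beta,\gamma}^{\delta;\omega_1,\omega_2}$ on the monomials $(x-c)^j$ for $0\le j\le n-1$.

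For that computation I would use the series representation \eqref{leftinverse:series} of the operator together with the Riemann--Liouville power rule \eqref{power:RL}, which gives $\prescript{RL}{c}I_x^{\nu}\big[(x-c)^j\big]=\frac{\Gamma(j+1)}{\Gamma(j+\nu+1)}(x-c)^{j+\nu}$ for every $\nu\in\mathbb{C}$ (admissible since $\mathrm{Re}(j)>-1$). Substituting $\nu=\alpha k+\beta l-\gamma$ and resumming, the factor $\Gamma(j+1)=j!$ and a power $(x-c)^{j-\gamma}$ factor out, and what remains is precisely the double series \eqref{5} for the bivariate Mittag-Leffler function with third parameter $-\gamma+j+1$ and Pochhammer parameter $-\delta$; that is, ${}_c\mathfrak{D}_{\alpha,\beta,\gamma}^{\delta;\omega_1,\omega_2}\big[(x-c)^j\big]=j!\,(x-c)^{-\gamma+j}E_{\alpha,\beta,-\gamma+j+1}^{-\delta}\big(\omega_1(x-c)^{\alpha},\omega_2(x-c)^{\beta}\big)$. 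The local uniform convergence of the series, and the legitimacy of the term-by-term differintegration behind \eqref{leftinverse:series}, are already guaranteed by Theorem \ref{Thm:seriesformula} and its use in Theorem \ref{thm1}. Inserting this back, the $j!$ cancels the $1/k!$ with $k=j$, and equating the two results of applying ${}_c\mathfrak{D}_{\alpha,\beta,\gamma}^{\delta;\omega_1,\omega_2}$ to the two sides of \eqref{inversion:IC} yields exactly \eqref{relation:RLC}.

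Every step is routine once Theorem \ref{Thm:inversion} is available; I expect the only mild obstacle to be the bookkeeping of domains and convergence, namely verifying that ${}_c\mathfrak{C}f\in L^1(c,d)$ so the left-inverse identity applies, and that the monomials are admissible inputs for \eqref{power:RL} and \eqref{leftinverse:series}. An alternative, fully self-contained route avoids Theorem \ref{Thm:inversion}: substitute the series formulas for ${}_c\mathfrak{C}$ and ${}_c\mathfrak{D}$ and, in each term, apply the classical identity $\prescript{RL}{c}I_x^{\rho}f^{(n)}(x)=\prescript{RL}{c}I_x^{\rho-n}f(x)-\sum_{j=0}^{n-1}\frac{(x-c)^{\rho-n+j}}{\Gamma(\rho-n+j+1)}f^{(j)}(c)$ (iterated integration by parts) with $\rho=\alpha k+\beta l+n-\gamma$; resumming the boundary contributions over $k,l$ reproduces the same Mittag-Leffler corrections. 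I would present the first route as the proof, since it reuses machinery already developed in the paper.
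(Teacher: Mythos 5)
Your main argument is correct, but it takes a genuinely different route from the paper's. The paper proves \eqref{relation:RLC} by a direct series computation: it expands ${}_c\mathfrak{C}_{\alpha,\beta,\gamma}^{\delta;\omega_1,\omega_2}f={}_c\mathfrak{I}_{\alpha,\beta,n-\gamma}^{-\delta;\omega_1,\omega_2}\big(f^{(n)}\big)$ via the series formula \eqref{seriesformula}, applies the classical identity \eqref{RLnD:relationship} with order $\alpha k+\beta l+n-\gamma$ in each term, and resums the boundary contributions over $k,l$ into the Mittag-Leffler corrections --- which is exactly the ``alternative, fully self-contained route'' you sketch at the end. Your primary route instead applies ${}_c\mathfrak{D}_{\alpha,\beta,\gamma}^{\delta;\omega_1,\omega_2}$ to the composition identity \eqref{inversion:IC}, collapses the left-hand side to ${}_c\mathfrak{C}_{\alpha,\beta,\gamma}^{\delta;\omega_1,\omega_2}f$ by the left-inverse property \eqref{inversion:RLI} (legitimate, since ${}_c\mathfrak{C}_{\alpha,\beta,\gamma}^{\delta;\omega_1,\omega_2}f\in L^1(c,d)$ by Theorem \ref{thm2} applied to $f^{(n)}$), and reduces the remaining work to evaluating ${}_c\mathfrak{D}$ on the monomials $(x-c)^j$, which via \eqref{leftinverse:series} and the power rule correctly gives $j!\,(x-c)^{j-\gamma}E_{\alpha,\beta,-\gamma+j+1}^{-\delta}\big(\omega_1(x-c)^{\alpha},\omega_2(x-c)^{\beta}\big)$, so the $j!$ cancels against the Taylor coefficients as you say. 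What your route buys is economy and structural clarity: it reuses Theorems \ref{thm1}, \ref{Thm:inversion} and \ref{thm2}, and makes transparent that the correction terms are just ${}_c\mathfrak{D}$ applied to the Taylor polynomial that the Caputo construction annihilates. What the paper's computation buys is independence from Theorem \ref{Thm:inversion} and fewer implicit hypotheses: your splitting of the right-hand side by linearity tacitly requires that ${}_c\mathfrak{D}_{\alpha,\beta,\gamma}^{\delta;\omega_1,\omega_2}f$ itself exists (admittedly implicit in the statement of \eqref{relation:RLC} anyway), whereas the paper produces both sides from a single series manipulation. One small notational caution in your monomial step: for small $k,l$ the exponent $\alpha k+\beta l-\gamma$ can have negative real part, so those terms are Riemann--Liouville derivatives rather than integrals and you should cite the differintegral power rule \eqref{power:RL} (valid since $j\ge 0>-1$) rather than an integral-only version; terms where $\alpha k+\beta l-\gamma+j+1$ is a nonpositive integer then vanish by the reciprocal-gamma convention, consistently with the Mittag-Leffler series.
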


\begin{proof}
We use the following well-known relationship between the original Riemann--Liouville fractional differintegral and the standard repeated derivative:
\begin{equation}
\label{RLnD:relationship}
\prescript{RL}{c}I^{\alpha}_x\left(\frac{\mathrm{d}^n}{\mathrm{d}x^n}f(x)\right)=\prescript{RL}{c}I^{\alpha-n}_xf(x)-\sum_{j=0}^{n-1}\frac{(x-c)^{\alpha-n+j}}{\Gamma(\alpha-n+j+1)}f^{(j)}(c),\quad\quad\alpha\in\mathbb{C},n\in\mathbb{N}.
\end{equation}
Now recall the definitions \eqref{fracderivC} and \eqref{fracderivRL} for the new fractional derivative operators, and use the relation \eqref{RLnD:relationship} in each term of the series formula:
\begin{align*}
\left( {}_c\mathfrak{C}_{\alpha,\beta,\gamma}^{\delta;\omega_1,\omega_2}f\right) (x)&={}_c\mathfrak{I}_{\alpha,\beta,n-\gamma}^{-\delta;\omega_1,\omega_2}\left(\frac{\mathrm{d}^n}{\mathrm{d}x^n}f(x)\right) \\
&=\sum_{k=0}^{\infty }\sum_{l=0}^{\infty }\frac{(-\delta )_{k+l}\omega_1^k\omega_2^l}{k!l!}\prescript{RL}{c}I_x^{\alpha k+\beta l+(n-\gamma)}\left(\frac{\mathrm{d}^n}{\mathrm{d}x^n}f(x)\right) \\
&=\sum_{k=0}^{\infty }\sum_{l=0}^{\infty }\frac{(-\delta )_{k+l}\omega_1^k\omega_2^l}{k!l!}\Bigg[\prescript{RL}{c}I^{(\alpha k+\beta l+n-\gamma)-n}_xf(x) \\
&\hspace{4cm}-\sum_{j=0}^{n-1}\frac{(x-c)^{(\alpha k+\beta l+n-\gamma)-n+j}}{\Gamma((\alpha k+\beta l+n-\gamma)-n+j+1)}f^{(j)}(c)\Bigg] \\
&=\sum_{k=0}^{\infty }\sum_{l=0}^{\infty }\frac{(-\delta )_{k+l}\omega_1^k\omega_2^l}{k!l!}\prescript{RL}{c}I^{\alpha k+\beta l-\gamma}_xf(x) \\
&\hspace{2cm}-\sum_{k=0}^{\infty }\sum_{l=0}^{\infty }\sum_{j=0}^{n-1}\frac{(-\delta )_{k+l}\omega_1^k\omega_2^l}{k!l!}\cdot\frac{(x-c)^{\alpha k+\beta l-\gamma+j}}{\Gamma(\alpha k+\beta l-\gamma+j+1)}f^{(j)}(c) \\
&=\left( {}_c\mathfrak{D}_{\alpha,\beta,\gamma}^{\delta;\omega_1,\omega_2}f\right)(x)-\sum_{j=0}^{n-1}(x-c)^{-\gamma+j}E_{\alpha,\beta,-\gamma+j+1}^{-\delta}\Big(\omega_1(x-c)^{\alpha},\omega_2(x-c)^{\beta}\Big)f^{(j)}(c)
\end{align*}
\end{proof}


\begin{corollary}
Let $f\in C^n(c,d)$ be an $n$ times differentiable function such that $f^{(k)}(c)=0$ for all $k=0,1,2,\dots,n-1$. Then for any $\alpha,\beta,\gamma,\delta,\omega_1,\omega_2\in\mathbb{C}$ with $\mathrm{Re}(\alpha)>0$, $\mathrm{Re}(\beta)>0$, and $0<\mathrm{Re}(\gamma)<n$, the fractional derivative operators \eqref{fracderivRL} and \eqref{fracderivC} of Riemann--Liouville and Caputo type are equal to each other and form a two-sided inverse to the fractional integral operator \eqref{4}.
\end{corollary}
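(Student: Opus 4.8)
The plan is to deduce this corollary directly from the three preceding results, observing that each of them produces ``boundary terms'' which are polynomial in $(x-c)$ with coefficients proportional to $f^{(j)}(c)$, and that all such terms vanish under the hypothesis $f^{(k)}(c)=0$ for $0\le k\le n-1$.

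First I would establish the equality of the two derivative operators. Applying Theorem~\ref{Thm:RLCrelation} to $f$ with the given parameters gives
\[
\left({}_c\mathfrak{C}_{\alpha,\beta,\gamma}^{\delta;\omega_1,\omega_2}f\right)(x)=\left({}_c\mathfrak{D}_{\alpha,\beta,\gamma}^{\delta;\omega_1,\omega_2}f\right)(x)-\sum_{j=0}^{n-1}(x-c)^{-\gamma+j}E_{\alpha,\beta,-\gamma+j+1}^{-\delta}\big(\omega_1(x-c)^{\alpha},\omega_2(x-c)^{\beta}\big)f^{(j)}(c),
\]
and since every summand contains the factor $f^{(j)}(c)=0$, the sum vanishes identically, so ${}_c\mathfrak{C}_{\alpha,\beta,\gamma}^{\delta;\omega_1,\omega_2}f={}_c\mathfrak{D}_{\alpha,\beta,\gamma}^{\delta;\omega_1,\omega_2}f$.

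Next I would verify the two-sided inverse property. The identity ${}_c\mathfrak{D}_{\alpha,\beta,\gamma}^{\delta;\omega_1,\omega_2}\,{}_c\mathfrak{I}_{\alpha,\beta,\gamma}^{\delta;\omega_1,\omega_2}f=f$ is exactly \eqref{inversion:RLI} (Theorem~\ref{thm1}), and holds with no extra hypotheses. For the composition in the other order, \eqref{inversion:IC} of Theorem~\ref{Thm:inversion} gives
\[
{}_c\mathfrak{I}_{\alpha,\beta,\gamma}^{\delta;\omega_1,\omega_2}\,{}_c\mathfrak{C}_{\alpha,\beta,\gamma}^{\delta;\omega_1,\omega_2}f(x)=f(x)-\sum_{k=0}^{n-1}\frac{(x-c)^kf^{(k)}(c)}{k!}=f(x),
\]
where again the sum disappears because of the vanishing derivatives at $c$. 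Combining this with the equality from the previous step, ${}_c\mathfrak{I}_{\alpha,\beta,\gamma}^{\delta;\omega_1,\omega_2}\,{}_c\mathfrak{D}_{\alpha,\beta,\gamma}^{\delta;\omega_1,\omega_2}f = {}_c\mathfrak{I}_{\alpha,\beta,\gamma}^{\delta;\omega_1,\omega_2}\,{}_c\mathfrak{C}_{\alpha,\beta,\gamma}^{\delta;\omega_1,\omega_2}f = f$, so the common operator ${}_c\mathfrak{D}_{\alpha,\beta,\gamma}^{\delta;\omega_1,\omega_2}={}_c\mathfrak{C}_{\alpha,\beta,\gamma}^{\delta;\omega_1,\omega_2}$ is a genuine two-sided inverse of ${}_c\mathfrak{I}_{\alpha,\beta,\gamma}^{\delta;\omega_1,\omega_2}$ on the relevant class of functions.

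There is no serious computational obstacle here; the only care needed is the bookkeeping of domains and of the integer parameters. I would check that $f$ (and the intermediate functions, such as ${}_c\mathfrak{I}_{\alpha,\beta,\gamma}^{\delta;\omega_1,\omega_2}f$) lie in the function spaces for which Theorems~\ref{thm1}, \ref{Thm:inversion} and \ref{Thm:RLCrelation} are stated, and that the integer $\lfloor\mathrm{Re}(\gamma)\rfloor+1$ occurring in the definitions~\eqref{fracderivRL} and \eqref{fracderivC} does not exceed the $n$ appearing in the hypothesis $0<\mathrm{Re}(\gamma)<n$, so that the boundary data $f^{(j)}(c)$ for $j<\lfloor\mathrm{Re}(\gamma)\rfloor+1$ is indeed among the data assumed to vanish. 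Once these routine points are settled, the corollary follows.
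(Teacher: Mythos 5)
Your proposal is correct and follows essentially the same route as the paper: apply Theorem \ref{Thm:RLCrelation} to identify the two derivative operators once the terms with $f^{(j)}(c)$ vanish, and then combine \eqref{inversion:RLI} and \eqref{inversion:IC} from Theorems \ref{thm1} and \ref{Thm:inversion} to get the two-sided inverse, noting (as the paper does) that the hypothesis $0<\mathrm{Re}(\gamma)<n$ guarantees $\lfloor\mathrm{Re}(\gamma)\rfloor+1\leq n$ so all the relevant boundary data vanish.
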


\begin{proof}
The condition on $\gamma$ means that $n\geq\lfloor\mathrm{Re}(\gamma)\rfloor+1$, so for sure all the initial value terms on the right-hand side of \eqref{relation:RLC} must be zero. This means
\[
\left( {}_c\mathfrak{C}_{\alpha,\beta,\gamma}^{\delta;\omega_1,\omega_2}f\right) (x)=\left( {}_c\mathfrak{D}_{\alpha,\beta,\gamma}^{\delta;\omega_1,\omega_2}f\right)(x).
\]
Similarly, all the initial value terms on the right-hand side of \eqref{inversion:IC} must be zero. So by \eqref{inversion:RLI} and \eqref{inversion:IC}, the fractional derivative operator (both $\mathfrak{D}$ and $\mathfrak{C}$ now being the same) is both a left inverse and a right inverse to the fractional integral operator.
\end{proof}

\section{Discussion and further work} \label{Sec:end}

In this paper, we have introduced a new Mittag-Leffler type function with two variables $x,y$ and four parameters $\alpha,\beta,\gamma,\delta$, which we called $E_{\alpha,\beta,\gamma}^{\delta}(x,y)$ and defined using a double infinite series of powers of $x$ and $y$. We have also given particular consideration to the special case where $x=t^{\alpha}$ and $y=t^{\beta}$ for a single variable $t$, giving rise to the function $t^{\gamma-1}E_{\alpha,\beta,\gamma}^{\delta}(t^{\alpha},t^{\beta})$ which is defined using a double infinite series of fractional powers of $t$. The motivation for this work comes from a number of different directions, which serve to prove the significance of this specific function.

\begin{itemize}
\item Firstly, this may be seen as a continuation of previous work on bivariate Mittag-Leffler functions, which has been ongoing in the last few years as a \textbf{pure mathematical} project. In particular, the second and third authors previously defined a bivariate Mittag-Leffler function in \cite{ozarslan-kurt}, and analysis of that function continued in \cite{kurt-ozarslan-fernandez}. In the current paper, we are continuing to develop the broader topic of bivariate Mittag-Leffler functions. \\

\item Secondly, the main motivation for our choice of this particular Mittag-Leffler function arises from the first author's correspondence with a team of researchers working in \textbf{bioengineering}. Specifically, they are applying a novel rheological approach to study cells and cellular materials, describing stress-strain relations by using fractional calculus to capture the viscoelastic behaviour of these materials. Their study of the mechanical properties of soft tissues is motivated by the aim of understanding disease developments. Some of their work has already appeared in \cite{bonfanti-etal}, and this already involves the classical Mittag-Leffler function $E_{\alpha,\beta}(x)$ of one variable and two parameters. In unpublished work shared with the first author, they have derived stress-strain relations in the more difficult case of three independent fractional springpots. It turns out that these relations can be described by an integral transform involving our bivariate Mittag-Leffler function in the kernel. This realisation -- that real biological processes can be modelled using a new bivariate Mittag-Leffler function -- was the main impetus for our work here. \\

It should be noted that the function emerging from these applications is of the form $t^{\gamma-1}E_{\alpha,\beta,\gamma}^{1}(t^{\alpha},t^{\beta})$, namely the univariate form with $\delta=1$. In this function there is no $\delta$ and no Pochhammer symbol, only a $(k+l)!$ on the numerator instead of $(\delta)_{k+l}$. Our reason for introducing the parameter $\delta$, inspired by the Pochhammer symbol in the numerator of the Prabhakar (3-parameter) Mittag-Leffler function $E_{\alpha,\beta}^{\rho}(x)$, is that including this Pochhammer symbol allows a semigroup property to be established (Theorem \ref{Thm:semigroup} above). If we eliminated $\delta$ by assuming $\delta=1$ always, then the composition of the integral operator with itself would be a new mystery operator; by allowing $\delta$ to vary, we can see that mystery operator as simply the same thing with $\delta=2$ instead of $\delta=1$. \\

\item Thirdly, the specific Mittag-Leffler function proposed here arises naturally from certain important \textbf{fractional differential equations}. We have already seen (Theorem \ref{Thm:FDE} and Corollary \ref{Coroll:FDE} above) how our Mittag-Leffler function appears as a solution of some very simple differential equations with two independent fractional orders of differentiation $\alpha,\beta$, in the same way as the original Mittag-Leffler function $E_{\alpha}(x)$ appears when there is only one fractional order of differentiation $\alpha$. The equations we found above can be studied further, and solved numerically to obtain approximations of our Mittag-Leffler function. \\

Furthermore, certain elementary systems of fractional differential equations also give rise to the same bivariate Mittag-Leffler function. Systems of fractional differential equations have so far been studied mostly from the numerical point of view, and applications have been discovered e.g. for electrical circuits \cite{kaczorek}. We are in contact with a research group working on systems of the form
\[
\begin{cases}
\prescript{C}{0}D^{\alpha}_tu(t)=Pu(t)+Qv(t), \\
\prescript{C}{0}D^{\beta}_tv(t)=Ru(t)+Sv(t),
\end{cases}
\]
where $\alpha,\beta$ are independent fractional orders of differentiation and $P,Q,R,S$ are arbitrary constants. We have discovered a connection between their (still unpublished) analytic solution and our bivariate Mittag-Leffler function. Since this system of fractional differential equations is very elementary, it is surprising that so far a full analytic solution does not exist in the literature. Again, it seems that our function has emerging importance in the field of fractional differential equations.
\end{itemize}

To summarise, the functions and operators defined in this paper find themselves at the intersection point of many important strands of research. The results we have proved concerning them are expected to be used immediately in some following works.

Future work using these Mittag-Leffler functions and associated differintegral operators is expected to include: numerical approximation of the functions by solving fractional differential equations; numerical approximation of the operators by Bernstein-polynomial techniques; asymptotic analysis of the functions; realisation of the operators in bioengineering applications; realisation of the functions in fractional differential equation systems; and many more properties waiting to be proved mathematically and then applied in practice.

\section*{Acknowledgements}
The authors would like to thank Alessandra Bonfanti and Louis Kaplan for stimulating discussions about interdisciplinary applications.

\end{document}